\theoremstyle{plain}
\newtheorem{thm}{Theorem}
\newtheorem{lem}[thm]{Lemma}
\newtheorem{cor}[thm]{Corollary}
\newtheorem{prop}[thm]{Proposition}
\theoremstyle{definition}
\newtheorem{claim}[thm]{Claim}
\newtheorem{rmk}[thm]{Remark}
\numberwithin{thm}{section}
\numberwithin{equation}{section}
\newcommand{\ga}[2]{\begin{gather}\label{#1}#2 \end{gather}}
\newcommand{\gr}{{\rm gr}}
\DeclareMathOperator{\im}{{\rm im}}
\newcommand{\Spec}{{\rm Spec \,}}
\newcommand{\sF}{{\mathsf F}}
\newcommand{\sO}{{\mathcal O}}
\newcommand{\C}{{\mathbb C}}
\renewcommand{\P}{{\mathbb P}}
\newcommand{\Q}{{\mathbb Q}}
\newcommand{\R}{{\mathbb R}}
\newcommand{\Z}{{\mathbb Z}}
\newcommand{\GL}{{\mathrm{GL}}}
\begin{document}

\title{A non-abelian version of Deligne's Fixed Part Theorem}
\author{H\'el\`ene Esnault \and Moritz Kerz}
\address{Freie Universit\"at Berlin, Berlin,  Germany;
{Harvard} University,  Cambridge, USA
}
\email{esnault@math.fu-berlin.de }
\address{   Fakult\"at f\"ur Mathematik \\
Universit\"at Regensburg \\
93040 Regensburg, Germany}
\email{moritz.kerz@mathematik.uni-regensburg.de}
\thanks{ The second author by the SFB 1085 Higher Invariants, Universit\"at Regensburg}

\begin{abstract}
  We formulate and prove a non-abelian analog of Deligne's Fixed Part theorem on Hodge
  classes, revisiting previous work of Jost--Zuo, Katzarkov--Pantev and Landesman--Litt.
  To this aim we  study algebraically isomonodromic extensions of local systems, and we
  relate them to variations of Hodge structures, for example we show that
  the Mumford-Tate group at a generic point stays constant in an algebraically
  isomonodromic extension of a variation of Hodge  structure.
\end{abstract}

\maketitle

\medskip

\section{Introduction}\label{sec:intro}

 On a   quasi-projective complex manifold, a rational Betti class  is called Hodge if it
 is integral and lies as a de Rham class in the right   levels of the Hodge and the weight filtrations.
   Let $f \colon X\to S$ be a projective morphism between quasi-projective complex
   manifolds.  Deligne’s Fixed Part Theorem asserts that a Hodge class on a fibre $X_s$  has
   finite  orbit under the fundamental group  of $S$ based at $s$  if and only if it
   extends to a Hodge class on $X$ after base change to a finite  \'etale cover of $S$.
   Here we have to make the technical assumption that $S$ topologically has the properties of
   Artin's elementary neighborhoods, see Theorem~\ref{thm:del_fixpart} for a precise formulation.

Replacing above the Hodge class with a rational polarized  variation of Hodge structure
which admits an integral structure, Deligne’s Fixed Part Theorem  has a perfect non-abelian
analog, which is the topic of our note.

\smallskip

We  introduce some notation in order to give a  precise formulation  of the main theorem.
On a complex manifold $X$,  there is  the notion of a {\it polarized variation of $K$-Hodge structure}  ($K \text{-}{\sf PVHS}$), for
$K\in \{ \mathbb Q, \mathbb R, \mathbb C \}$, see  Subsection~\ref{subsec:basichodgevar}. It consists of a triple
 $(\mathbb L, \sF , Q)$, where $\mathbb L$ is a $K$-local system on $X$, $\sF$ is a finite
 filtration by holomorphic subbundles of   $\mathbb L\otimes_K \sO_X$ and $Q\colon \mathbb
 L \otimes \overline{\mathbb L}\to K$ is a sesquilinear perfect pairing of local systems. This triple  is
 asked to satisfy the Hodge-Riemann relations and Griffiths transversality.

If  $R\subset K$ is a subring we denote by $(R)K \text{-}{\sf PVHS}$ those polarized
variations of $K$-Hodge structure $(\mathbb L, \sF , Q)$ such that the local system
$\mathbb L$ can be defined over $R$ in the classical sense, that is that its monodromy representation can be defined
over $R$.

 Let  $f: X\to S$ be an algebraic morphism between quasi-projective complex
  manifolds.  We say that $f$ is a {\it good algebraic morphism} if there is a smooth extension $\overline f:    \overline X \to S$ of $f$ such that $\overline X \supset X$ is quasi-projective smooth and $D=\overline X\setminus X$ is a relative normal crossings divisor.   
 We prove our main results assuming that $f$ satisfies one of the
following conditions, and we assume for the rest of the introduction that
this is the case.  \\
{\bf Ass:}
$f: X\to S$ is a good algebraic morphism
and at least one of the following properties holds for $f$: 
\begin{itemize}
\item[I)] $S$ is an Artin  $K\pi1$, i.e.\ its fundamental group is a successive extension
  of finitely generated free groups and $\pi_i(S)=0$ for $i>1$,
\item[II)]  $X_s$ is a hyperbolic Riemann surface,
\item[III)] $f$ has a continuous section $S\to X$.
\end{itemize}
For the precise notion of an Artin  $K\pi1$ see Subsection~\ref{sec:Artin}. A hyperbolic Riemann
surface $Y$ for us is the complement of finitely many points in a compact connected Riemann surface
$\overline Y$ such that the
 Euler characteristic $\chi(Y)$ is negative.

\medskip

Our main theorem is the following non-abelian Fixed Part Theorem, see
Section~\ref{sec:proof_main} for the proof.

\begin{thm} \label{thm:main}   Let $f: X\to S$ be a good
  algebraic morphism  fulfilling {\bf Ass}, $s$   be a point in $S$.
Let  $(\mathbb L_s, \sF_s, Q_s)$  be a $(\Z) K \text{-}{\sf PVHS} $ on the fiber $X_s$, where $K\in
\{\Q,\R,\C\}$.
Then the following conditions are equivalent:
\begin{itemize}
\item[1)]   the orbit $\pi_1(S,s) \cdot [\mathbb L_s]$ in the set of isomorphism classes
  of $K$-local systems on $X_s$ is finite;
\item[2)]   $\mathbb L_s$ essentially extends to a $K$-local system  $\mathbb L$ on $X$;
 \item[3)] $(\mathbb L_s,\sF_s,Q_s)$ essentially extends to  a  $(\Z)K \text{-}{\sf PVHS} $
   $(\mathbb L, \sF, Q)$ on $X$;
 \item[4)]  $(\mathbb L_s, \sF_s, Q_s)$  extends to a $K \text{-}{\sf PVHS} $
on $X_\Delta=f^{-1}(\Delta)$ for some contractible open  neighborhood $\Delta\subset S$ of
$s$.
 \item[5)]  the filtration $\sF_s$  extends to a (relative) Griffiths
  transversal filtration by  subbundles
  $\hat \sF$ of the formal isomonodromic Deligne extension $(\hat{\mathcal E},\hat \nabla\colon
  \hat{\mathcal E} \to \hat{\Omega}^1_{\hat{\overline  X}/ \hat S}(\log D)(\hat{\mathcal
    E}))$.
\end{itemize}
\end{thm}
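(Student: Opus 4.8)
The implications $3)\Rightarrow 4)\Rightarrow 1)$ and $3)\Rightarrow 2)$ are the easy directions, and I would dispose of them first. For $3)\Rightarrow 4)$ one simply restricts the essential extension $(\mathbb L,\sF,Q)$ on $X$ to $X_\Delta$ for a contractible $\Delta$; for $4)\Rightarrow 1)$ one observes that a $K\text{-}{\sf PVHS}$ on $X_\Delta$ restricts on each fibre $X_t$, $t\in\Delta$, to a $K\text{-}{\sf PVHS}$, and since $\Delta$ is contractible the fibres $X_t$ are all diffeomorphic to $X_s$ in a way compatible with $\pi_1$, so the monodromy of $\mathbb L_t$ is conjugate to that of $\mathbb L_s$; running $t$ around a loop in $S$ based at $s$ realizes the $\pi_1(S,s)$-action, and Deligne's semisimplicity together with rigidity of $K\text{-}{\sf PVHS}$'s with fixed Hodge numbers (the relevant moduli being a quasi-projective variety, or: the period map argument) forces the orbit to be finite. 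The implication $2)\Rightarrow 3)$ is where the Hodge-theoretic input enters: given an essential extension $\mathbb L$ of the underlying local system, one must produce the filtration $\sF$ and the polarization $Q$ on $X$; this is precisely the place to invoke the study of \emph{algebraically isomonodromic extensions} and the result, advertised in the abstract, that the Mumford--Tate group at a generic point is constant along such an extension — the extension $\mathbb L$ is algebraically isomonodromic because $X$ is quasi-projective, and the constancy of the Mumford--Tate group upgrades the fibrewise $K\text{-}{\sf PVHS}$ structure to one on the total space $X$.

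The substantive implication is $1)\Rightarrow 2)$ (or $1)\Rightarrow 3)$ directly). The plan is: replacing $S$ by a finite \'etale cover (legitimate since we only claim an \emph{essential} extension, and this is where hypothesis I)/II)/III) is used to control the topology of $S$), we may assume the orbit is a single point, i.e.\ $[\mathbb L_s]$ is fixed by $\pi_1(S,s)$. Now one has the exact sequence of fundamental groups
\eq{eq:pi1seq}{1\to \pi_1(X_s,\bar s)\to \pi_1(X,\bar s)\to \pi_1(S,s)\to 1,}
which is split under hypothesis III), and whose base is an Artin $K\pi_1$ (or a surface group) under I) (resp.\ II)). The representation $\rho_s\colon \pi_1(X_s)\to\GL(\mathbb L_s)$ has a conjugacy class stable under the outer action of $\pi_1(S,s)$ coming from \eqref{eq:pi1seq}; the goal is to extend $\rho_s$ to $\pi_1(X)$. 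Under III) this is formal from the splitting plus an obstruction-class computation in group cohomology (the obstruction to extending lives in $H^2(\pi_1(S,s),-)$ with coefficients in the centralizer/center, and one kills it after the finite \'etale base change using that $S$ is an Artin $K\pi_1$, so that $H^{>1}$ of $\pi_1(S)$ with finite coefficients can be trimmed). Under I) one runs the same argument using that $\pi_1(S,s)$ is built from free groups, for which $H^{\geq 2}$ vanishes, so the obstructions vanish outright and the lifting exists; under II) one uses the theory of Landesman--Litt / the explicit structure of surface mapping class groups to produce the extension. Having extended the local system, one checks the extension is \emph{essential} in the sense of the definition (this is a matter of tracking what the finite \'etale base change does), giving $2)$, and then applies $2)\Rightarrow 3)$.

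The main obstacle I anticipate is the passage $2)\Rightarrow 3)$: producing a \emph{global} $K\text{-}{\sf PVHS}$ structure on $X$ from the fibrewise one. Fibrewise, each $\mathbb L_t$ carries its own polarized Hodge structure, but a priori the Hodge filtration varies only real-analytically in $t$, and there is no reason for Griffiths transversality in the $S$-directions to hold, nor for the integral structure to be preserved — this is exactly the phenomenon that the non-abelian Fixed Part Theorem must overcome. The resolution, following Jost--Zuo / Katzarkov--Pantev in spirit, is that the extension $\mathbb L$ on quasi-projective $X$ underlies a polarizable $\C$-VHS by Mochizuki's (or Corlette--Simpson's) work \emph{provided} its restriction to one fibre does, together with the rigidity forced by condition $1)$; one then argues that the Mumford--Tate/monodromy constancy pins the Hodge numbers and the field of definition, so that the resulting $\C\text{-}{\sf PVHS}$ is in fact a $(\Z)K\text{-}{\sf PVHS}$ extending the given one on $X_s$. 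Keeping careful track of the distinction between ``extends'' and ``essentially extends'', and of which finite \'etale base changes are permitted at each stage, will be the bookkeeping-heavy part; the conceptual heart is the constancy of the Mumford--Tate group under algebraically isomonodromic deformation, which I would treat as the key black box supplied earlier in the paper.
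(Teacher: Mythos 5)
Your overall architecture for $1)\Rightarrow 3)$ is the paper's (produce an algebraically isomonodromic extension of the local system, then use Mochizuki's non-abelian Hodge correspondence to transport the filtration), but two of your steps have genuine gaps.

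The most serious one is $4)\Rightarrow 1)$. Condition 4) only gives a ${\sf PVHS}$ over the contractible neighborhood $\Delta$; under the topological trivialization of $f$ over $\Delta$ the classes $[\mathbb L_t]$, $t\in\Delta$, are all identified with $[\mathbb L_s]$ itself, so 4) says nothing a priori about the conjugates $\gamma\cdot[\mathbb L_s]$ for loops $\gamma$ that leave $\Delta$. Deligne's finiteness theorem and ``rigidity of ${\sf PVHS}$ with fixed Hodge numbers'' would conclude \emph{if} you already knew that every element of the orbit underlies a ${\sf PVHS}$ on $X_s$, but that is exactly what must be proved. The missing ingredient is Simpson's theorem that the non-abelian Hodge locus $\mathrm{NL}(f,r)$ is a \emph{closed analytic} subset of the \'etale space $T$ of $R^1f_*\GL_r(\C)$, finite over $S$ (Theorem~\ref{thm:SimpNL}, extended to non-proper good maps in Appendix~\ref{app:nonabhodge}): condition 4) places an open piece of the leaf of $T\to S$ through $[\mathbb L_s]$ inside $\mathrm{NL}$, closedness plus irreducibility of the leaf force the whole leaf into $\mathrm{NL}$, and finiteness over $S$ bounds its fibre over $s$, which is the orbit. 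Without this propagation mechanism the implication is unproved.

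Secondly, in your $2)\Rightarrow 3)$ step the assertion that ``the extension $\mathbb L$ is algebraically isomonodromic because $X$ is quasi-projective'' is false: an arbitrary extension need not preserve the algebraic monodromy group (twist by an infinite-order character of $\pi_1(S)$). The paper instead proves $1)\Rightarrow 3)$ directly: Theorem~\ref{introthm2} \emph{constructs} an algebraically isomonodromic, $\Z$-defined extension, and the Hodge filtration is then obtained from Proposition~\ref{prop:varhodgecrit} (the criterion $\mathbb L\cong\mathbb L^{\lambda}$) combined with the essential uniqueness of admissible extensions, which descends the isomorphism $\mathbb L_i^{\lambda}\cong\mathbb L_i$ from $X_s$ to $X$; constancy of the Mumford--Tate group is not the engine here, it only serves Remark~\ref{rmk:introMT}. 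Relatedly, your claim that under I) the obstruction to extending the representation ``vanishes outright'' because free groups have no $H^{\ge 2}$ is inaccurate: $\pi_1(S)$ is only a successive extension of free groups, the coefficients are $K^{\times}$ rather than finite, and the paper must pass to profinite completions and $\ell$-adic coefficients to kill the obstruction on a finite-index subgroup.
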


Here ``essentially extends'' means that it extends after base change with respect to a
finite, \'etale, surjective covering of $S$, see Subsection~\ref{ss:IFP} for a precise
formulation.
  In 5)  $\hat S$ is the formal scheme of $S$ along $s$ and
  $\hat{ \overline X}$ the formal scheme of $\overline X$ along $ \overline X_s$. The formal isomonodromic Deligne extension
  $(\hat{\mathcal E},\hat \nabla)$ is the formal completion of the log flat bundle on
  $\overline X_\Delta$
  associated by Deligne~\cite[Prop.~5.4, Rmk.~5.5]{Del70} to the local system on $X_\Delta$ which canonically
  extends $\mathbb L_s$. Recall that the filtration $\sF_s$ canonically extends to a filtration by
  subbundles of the Deligne extension over $\overline X_s$ by Schmid's nilpotent orbit
  theorem, see \cite[(2.1)]{CK89}, so part 5) can be
  interpreted as a formal deformation problem for the latter filtration of bundles from
  $\overline X_s$ to $\hat{\overline  X}$.

The idea of the proof is to show the implication 1) $\Rightarrow $ 3) by constructing an
algebraically isomonodromic extension of $\mathbb L_s$ to $X$, see Theorem~\ref{introthm2}, and to use
T.\ Mochizuki's non-abelian Hodge correspondence, see Subsection~\ref{ss:mochi}, which enables one to relate the action of the $\C^\times$ flow on the Higgs bundle associated to $\mathbb L_s$  to the one on an algebraically  isomonodromic extension $\mathbb L$.

In order to
show the implication 4) $\Rightarrow$ 1) we use Simpson's non-abelian Hodge loci. In fact,
he considers only the case $f$ projective, and we generalize  in Appendix~\ref{app:nonabhodge}  his theory to the case of a
 {\it good holomorphic map} $f: X\to S$, that is  $X$ and $S$ are  complex manifolds, $f$ is submersive and  admits
a relative compactification $\overline f: \overline X\to S$ such that  $\overline f$ is a projective morphism and $D=\bar X \setminus X$ is a relative normal crossings divisor.
 The analogous implication 5) $\Rightarrow$ 1)  was
  suggested to us by D.~Litt, see also~\cite[Cor.~4.4.3]{Lit24}.

\begin{rmk}\label{intro:rmk1}
Theorem~\ref{thm:main} has a well-known ``Lefschetz hyperplane version'', which is deduced from
T.~Mochizuki's work in the same way.
Let us assume $\overline X$ is a projective compactification of $X$ with  a  simple normal crossings divisor  $D= \overline X\setminus X$ at infinity.
Consider a  hyperplane section  $ \overline{Y}\subset \overline{X}$ for a projective
embedding $\overline X\hookrightarrow \mathbb P^N_{\C}$ which is transversal  to
the stratification induced by $D$, i.e.\ such that the intersection of $\overline Y$ and
of irreducible components of $D$ is smooth of the right dimension.
Assume that $\dim X\ge 2$ and recall that then the map $\pi_1(Y)\to \pi_1(X)$
is surjective~\cite[II.1.1]{GM88},  where $Y=\overline{Y}\cap X$. Let $\mathbb L$ be a $K$-local system on $X$ such that $\mathbb L|_Y$
underlies  a $(\Z ) K\text{-}\mathsf{PVHS}$ $(\mathbb L|_Y, \mathsf F_Y, Q_Y)$. Then the
filtration $\mathsf F_Y$ and the polarization $Q_Y$ uniquely extend to a   $(\Z )
K\text{-}\mathsf{PVHS}$ $(\mathbb L, \mathsf F, Q)$.
\end{rmk}

\begin{rmk}\label{rmk:introMT}
 For $K=\Q$ one can choose the extension in 3)   such that
   the Hodge-generic locus of $(\mathbb L,\sF,Q)$ meets $X_s$, or stated differently such that the  Mumford-Tate groups of
  $(\mathbb L_s,\sF_s,Q_s)$ and of  $(\mathbb L,\sF,Q)$ are the same, see Section~\ref{sec:proof_main}.

  The same argument implies that in Remark~\ref{intro:rmk1} the Hodge-generic locus of
  $(\mathbb L, \mathsf F, Q)$ meets $Y$.
\end{rmk}

As mentioned above our new method is to study {\it algebraically   isomonodromic extensions} and to relate them to  rigidity properties of the Hodge data on local systems.
The notion of an  isomonodromic extension of a bundle with a flat connection  on a fiber  $X_s$ of a
good holomorphic map $f \colon X\to S$  to a relative flat bundle over a neighborhood
of $s\in  S$
 is well-established in the theory of non-linear differential equations;  its connection to Hodge
 theory has been studied in particular in~\cite{LL22}.

  It turns out that the right notion  for our global study of  local systems is not the
  discrete monodromy group itself, but the  identity component of its Zariski
  closure.
For a $K$-local system $\mathbb L$ on $X$ and $x\in X$ we denote by $\mathrm{Mo}(\mathbb
L,x)$  the Zariski closure in $\mathrm{GL}(\mathbb L_x)$ of the image of the monodromy
representation
$
\pi_1(X,x)\to \mathrm{GL}(\mathbb L_x)$ and by   $\mathrm{Mo}(\mathbb
L,x)^0$ its identity component. One calls  $\mathrm{Mo}(\mathbb
L,x)^0$ the {\it algebraic monodromy group}.

  We say that an extension of a
  local system to a bigger manifold is {\it algebraically isomonodromic} if the two algebraic
  monodromy groups are the same.
 We first prove by group theoretic means,  developed in Sections~\ref{sec:groupth}
 and~\ref{sec:extreps} and Appendix~\ref{app:extreps},  the following theorem, which is
 shown in Section~\ref{sec:proof_main}.  We fix a field $K$ of characteristic zero.

\begin{thm}\label{introthm2} Let $f \colon X\to S$ be a
    good algebraic morphism fulfilling {\bf Ass}.
 Let $\mathbb L_s$ be a $K$-local system on $X_s$ with semisimple algebraic monodromy
group.
Then the  following conditions are equivalent:
\begin{itemize}
\item[a)] the orbit $\pi_1(S,s) \cdot [\mathbb L_s]$ in the set of isomorphism classes of
  $K$-local systems on $X_s$ is finite;
\item[b)]  $ \mathbb L_s$ essentially extends to a local system
   $\mathbb L$ on $X$;
\item[c)] $\mathbb L_s$ essentially extends to an algebraically isomonodromic local system $\mathbb L$ relative to $X_s$.

\end{itemize}
Moreover, the following holds:
\begin{itemize}
\item[i)]  if  $\mathbb L$  as in c) exists it is essentially unique;
\item[ii)]  if $K$ is a number field and $\mathbb L_s$
has an integral lattice $\mathbb L_{s, \sO_K}$, then $\mathbb L$  as in c) essentially has an integral  lattice $\mathbb L_{ \sO_K}$;
\item[ iii)] if $\mathbb L_s$ is absolutely simple, then  an extension $\mathbb L$ to $X$
which  has finite determinant is algebraically  isomonodromic relative to $X_s$.
\end{itemize}
\end{thm}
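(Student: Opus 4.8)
The plan is to prove the cycle of implications a) $\Rightarrow$ c) $\Rightarrow$ b) $\Rightarrow$ a) and then the three additional assertions. The implication c) $\Rightarrow$ b) is trivial, an algebraically isomonodromic extension being in particular an extension. For b) $\Rightarrow$ a) I would argue that if $\mathbb{L}$ is a local system on $X'=X\times_S S'$ for a finite \'etale surjective $S'\to S$ with $\mathbb{L}|_{X_{s'}}\cong\mathbb{L}_s$ for some $s'$ over $s$, then the isomorphism class of the restriction of the global local system $\mathbb{L}$ to any fibre of $X'\to S'$ is invariant under parallel transport in the base; hence $\pi_1(S',s')$ fixes $[\mathbb{L}_s]$, and since $\pi_1(S',s')$ has finite index in $\pi_1(S,s)$ the orbit $\pi_1(S,s)\cdot[\mathbb{L}_s]$ is finite. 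For a) $\Rightarrow$ c) I would first replace $S$ by a finite \'etale cover — which affects neither $X_s$, nor $\mathbb{L}_s$, nor $\mathrm{Mo}(\mathbb{L}_s,x)^0$ — so that $\pi_1(S,s)$ fixes $[\mathbb{L}_s]$; equivalently, the monodromy representation $\rho_s\colon\pi_1(X_s,x)\to\GL((\mathbb{L}_s)_x)$ is isomorphic to each of its conjugates under the outer action of $\pi_1(S,s)$. Under each of the standing hypotheses I), II), III) one has the exact homotopy sequence
\[
1\longrightarrow\pi_1(X_s,x)\longrightarrow\pi_1(X,x)\longrightarrow\pi_1(S,s)\longrightarrow 1
\]
(e.g.\ split by the section in case III).

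This turns a) $\Rightarrow$ c) into the purely group-theoretic problem of extending the $\pi_1(X,x)$-invariant, semisimple representation $\rho_s$ of the normal subgroup $\pi_1(X_s,x)$ to a representation of $\pi_1(X,x)$ without enlarging the algebraic monodromy group. (The representation $\rho_s$ is semisimple because $M:=\mathrm{Mo}(\mathbb{L}_s,x)^0$ is assumed semisimple, so that in particular the centre $Z(M)$ is finite.) I would handle the existence of an extension by the obstruction calculus: the obstruction lies in a second cohomology group of $\pi_1(S,s)$ with coefficients in a group built from $Z(M)$ and the centraliser of $\rho_s(\pi_1(X_s,x))$, and the finiteness of $Z(M)$ makes this class of finite order, so it is killed by a further finite \'etale base change of $S$. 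For the control of the algebraic monodromy: the algebraic monodromy group $G:=\mathrm{Mo}(\mathbb{L},x)$ of any extension contains and normalises $M$, and $G^0/M$ is generated up to finite index by images of lifts of elements of $\pi_1(S,s)$, each of which acts trivially on $[\mathbb{L}_s]$; its residual positive-dimensional part is then removed by tensoring $\mathbb{L}$ with a character of $\pi_1(S,s)$, which does not alter $\mathbb{L}|_{X_s}$. Making all this precise — which is exactly the content of Sections~\ref{sec:groupth} and~\ref{sec:extreps} and Appendix~\ref{app:extreps} — is the main obstacle; the remaining arguments are comparatively elementary.

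Part i) I would deduce as follows. Given two algebraically isomonodromic extensions $\mathbb{L},\mathbb{L}'$ of $\mathbb{L}_s$, set $\mathcal{E}:=\mathcal{H}om(\mathbb{L},\mathbb{L}')$, so that $\mathcal{E}|_{X_s}=\mathcal{E}nd(\mathbb{L}_s)$. The subspace $B:=\mathrm{End}_{\pi_1(X_s,x)}((\mathbb{L}_s)_x)\ni\mathrm{id}$ consists of the invariants of the normal subgroup $\pi_1(X_s,x)$, hence is preserved by $G_{\mathcal{E}}:=\mathrm{Mo}(\mathcal{E},x)$; since $\mathbb{L}$ and $\mathbb{L}'$ are algebraically isomonodromic and $Z(M)$ is finite, one checks that $G_{\mathcal{E}}^0$ is the image of the diagonal $\Delta M\subset M\times M$, and it therefore acts trivially on $B$. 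Thus $\pi_1(X,x)$ acts on $B$ through the finite group $G_{\mathcal{E}}/G_{\mathcal{E}}^0$, and after a finite \'etale base change $\mathrm{id}\in B$ becomes $\pi_1(X,x)$-invariant, i.e.\ defines an isomorphism $\mathbb{L}\cong\mathbb{L}'$. For ii), when $\rho_s$ preserves an $\sO_K$-lattice I would show that the extension $\mathbb{L}$ from c) has monodromy bounded at every finite place $v$ of $K$ — up to the centre it normalises a bounded subgroup Zariski-dense in $M$, while $\det\mathbb{L}$ has finite order because $M\subset\mathrm{SL}_n$ — so that, after a finite \'etale base change, the $\sO_K$-span of the $\mathbb{L}$-translates of the given lattice is a $\pi_1(X,x)$-stable lattice; the required quantitative inputs are again those of the cited group-theoretic sections.

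Finally, for iii), assume $\mathbb{L}_s$ absolutely simple and let $\mathbb{L}$ be an extension to $X$ with finite determinant. By b) $\Rightarrow$ a) the orbit of $[\mathbb{L}_s]$ is finite, so by a) $\Rightarrow$ c) there is, over a finite \'etale cover of $S$, an algebraically isomonodromic extension $\mathbb{L}_0$ with $\mathrm{Mo}(\mathbb{L}_0,x)^0=M$; as $M$ is semisimple, $M\subset\mathrm{SL}_n$ and so $\det\mathbb{L}_0$ has finite order. Pulling $\mathbb{L}$ back to the same cover, both $\mathbb{L}$ and $\mathbb{L}_0$ restrict to $\mathbb{L}_s$, so by Schur's lemma along the exact homotopy sequence the function $g\mapsto\rho_{\mathbb{L}}(g)^{-1}\rho_{\mathbb{L}_0}(g)$ is a character $c$ of $\pi_1(X,x)$, trivial on $\pi_1(X_s,x)$, hence factoring through $\pi_1(S,s)$; thus $\mathbb{L}\cong\mathbb{L}_0\otimes c^{-1}$. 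Comparing determinants gives that $c^{\otimes n}$ has finite order, whence $c$ has finite order because $\pi_1(S,s)$ is finitely generated. Since tensoring by a finite-order character does not change the identity component of the algebraic monodromy group, $\mathrm{Mo}(\mathbb{L},x)^0=\mathrm{Mo}(\mathbb{L}_0,x)^0=M=\mathrm{Mo}(\mathbb{L}_s,x)^0$, i.e.\ $\mathbb{L}$ is algebraically isomonodromic relative to $X_s$.
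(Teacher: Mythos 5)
Your overall architecture (Clifford-theoretic extension of the projective representation, obstruction calculus, then separate arguments for i)--iii)) is the same as the paper's, and your treatments of b) $\Rightarrow$ a), of i), and of iii) are essentially sound. But there is a genuine gap at the heart of a) $\Rightarrow$ c), in the sentence asserting that the lifting obstruction ``is of finite order, so it is killed by a further finite \'etale base change of $S$.'' For a \emph{discrete} finitely generated group $\Gamma=\pi_1(S,s)$, a torsion class in $H^2(\Gamma,\mu_N)$ need \emph{not} restrict to zero on any finite-index subgroup: corestriction only gives $\mathrm{cores}\circ\mathrm{res}=[\Gamma:\Gamma']$, which is no help, and concretely Deligne's non-residual-finiteness of the universal central extension of $\mathrm{Sp}_{2g}(\Z)$ ($g\ge 2$) exhibits a quasi-projective $S$ (a level cover of $\mathcal A_g$) and a torsion degree-$2$ class that survives on every finite-index subgroup. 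Your step is valid when $S$ is an Artin $K\pi1$ (that is Lemma~\ref{lem:finitekill}/Proposition~\ref{prop:artKpiki}, hypothesis I)), but under hypotheses II) and III) nothing is assumed about $\pi_1(S)$ and the argument breaks. This is precisely the issue the paper flags in the introduction and circumvents by a detour through profinite groups: one spreads $\rho_H$ out to an $\ell$-adic continuous representation of $\widehat{\pi_1(X_s,x)}$, extends projectively to $\widehat{\pi_1(X,x)}$, and there the continuous $H^2$ with finite coefficients is a colimit of cohomologies of finite quotients, so every class is inflated from a finite quotient and dies on the corresponding \emph{open} subgroup (Proposition~\ref{prop:appext}); one then descends back to $K$ by a specialization and Galois-descent argument. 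This is why the only group-theoretic input needed from I)--III) is the injectivity of $\widehat{\pi_1(X_s,x)}\to\widehat{\pi_1(X,x)}$.

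Two secondary points. First, your control of the algebraic monodromy of the extension is too optimistic in the non-simple case: the possible excess in $\mathrm{Mo}(\rho)^0$ is not only a central torus removable by a character twist, but a priori a positive-dimensional \emph{semisimple} complement $W$ centralizing $M$; the paper kills it by Schur's lemma applied to the simple constituents together with the finiteness of determinants (Proposition~\ref{prop:admalgisom}), and recovers isomonodromy of the full extension from an admissible extension on a finite-index subgroup via an induced-representation argument (Proposition~\ref{prop:finindexadmext}). Second, for ii) the assertion that the $\sO_K$-span of the $\rho(G)$-translates of a lattice is again a lattice is exactly the boundedness you need to prove; the paper obtains it from the Jordan--Zassenhaus finiteness of $H$-stable lattices up to homothety combined with the finite-determinant normalization. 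Neither of these is fatal to the strategy, but both require the arguments just indicated rather than the shortcuts you propose.
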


Recall that if $\mathbb L_s $ underlies a  $(\Z) \C \text{-}{\sf PVHS} $ then  by~\cite[Sec.~4.2]{Del71}  its
algebraic monodromy group is semisimple, see Theorem~\ref{thm:ss}(v). We do not know
whether the lattice $\mathbb L_{\sO_K}$   in ii) can be chosen as an extension of  $\mathbb L_{s,\sO_K}$.

It is a classical observation due to Clifford that in the setting of
Theorem~\ref{introthm2} one can canonically extend the representation of the fundamental group
associated to an absolutely simple local system $\mathbb L_s$ to a projective representation of
$\pi_1(X)$. The problem is to lift the latter to a proper representation. Our
observation is that this lifting process, which a priori has an obstruction in
$H^2(\Gamma,K^\times)$, is more manageable in the world of pro-finite groups.
Our proof of Theorem~\ref{introthm2} in fact relies on passing to continuous representations over
the $\ell$-adic numbers of the profinite completion of fundamental groups. In the
profinite world the continuous $H^2$-obstruction can be killed by passing to an open
subgroup.

\smallskip

We can formulate
an application of Theorem~\ref{introthm2} to representations of mapping class
groups which refines~\cite[Prop.~2.3.4]{LLcan}, see Section~\ref{sec:proof_main} for
a proof.

\begin{cor}\label{cor:introcanrepsurfg}
Let  $S$ be a finite \'etale covering of the moduli stack of hyperbolic punctured compact Riemann surfaces
with universal curve $f\colon X\to S$. Fix $s\in S$. Then a semisimple local system  $\mathbb
L_s$ on
$X_s$   such that the orbit $\pi_1(S,s) \cdot
[\mathbb L_s]$ is finite, essentially extends to $X$.
\end{cor}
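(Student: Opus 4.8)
We would like to bootstrap from Theorem~\ref{introthm2} by descent along finite \'etale covers of $S$ together with one auxiliary covering of the fibre curve. Since a hyperbolic punctured compact Riemann surface is in particular a hyperbolic Riemann surface, $f\colon X\to S$ is a good holomorphic map satisfying property~II): the universal compact curve provides the compactification $\overline X\to S$, with $D$ the union of the tautological puncture-sections, and the same remains true after any finite \'etale base change of $S$. As ``essentially extends'' is insensitive to such base changes, we pass to finite \'etale covers of $S$ freely, and in particular assume $S$ is a quasi-projective manifold. First we reduce to $\mathbb L_s$ absolutely simple: writing the semi-simple $\mathbb L_s$ as a direct sum of simple local systems, a finite \'etale base change of $S$ fixes the isomorphism class of each summand (so each summand again has finite orbit), and $\mathbb L_s$ essentially extends once every summand does; then, after a finite extension of $K$ --- removed at the end by Galois descent --- $\mathbb L_s$ is absolutely simple, so $G:=\mathrm{Mo}(\mathbb L_s,x)$ is reductive for a base point $x\in X_s$. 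The key analytic input is that $\det\mathbb L_s$ has finite order: it is a character of $\pi_1(X_s,x)$ with finite $\pi_1(S,s)$-orbit, and since the monodromy image of $\pi_1(S,s)$ in the mapping class group of $X_s$ has finite index, feeding powers of Dehn twists along nonseparating simple closed curves (whose homology classes generate $H_1(X_s,\Z)$) into the stabiliser of $\det\mathbb L_s$ forces its values to be roots of unity. Hence $G^0\subset\mathrm{SL}(\mathbb L_{s,x})$.

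Write $G^0=T\cdot M$ with $T=Z(G^0)^0$ a central torus and $M=[G^0,G^0]$ semi-simple. If $T$ acts by scalars on $\mathbb L_{s,x}\otimes\overline K$, these scalars lie in $\mathrm{SL}$, hence $T=1$ and $G^0=M$ is semi-simple; then Theorem~\ref{introthm2}, the implication a)$\Rightarrow$b), shows that $\mathbb L_s$ essentially extends to $X$ --- this already covers $\rank\mathbb L_s=1$, where $G^0=1$ and $\mathbb L_s$ has finite monodromy. Otherwise $T$ acts with at least two distinct weights, and Clifford theory writes $\mathbb L_s\otimes\overline K\cong p_*\mathbb W'_s$ as an induced local system along the connected finite \'etale cover $p\colon X'_s\to X_s$ cut out by the stabiliser in $\pi_1(X_s,x)$ of a single $T$-weight space $V_1$, where $\mathbb W'_s$ is the local system $V_1$ on $X'_s$, of rank $\rank\mathbb L_s/\deg p<\rank\mathbb L_s$. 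Since $T$ is central in $G^0$, the algebraic monodromy group of $\mathbb W'_s$ has identity component $T|_{V_1}\cdot M|_{V_1}$ with $T|_{V_1}$ scalar and $M|_{V_1}$ semi-simple, so $\mathbb W'_s$ is of the first type above. The conjugacy class of $p_*\pi_1(X'_s)\le\pi_1(X_s,x)$ is canonically attached to $[\mathbb L_s]$, hence fixed by $\pi_1(S,s)$ up to finite index; so, after a finite \'etale base change $S'\to S$, the cover $X'_s$ spreads out to a connected finite \'etale cover $X'\to X_{S'}$, whose fibrewise branched-cover compactification $\overline{X'}\to S'$ exhibits $f'\colon X'\to S'$ as a good holomorphic map whose fibres are finite covers of $X_s$, hence hyperbolic Riemann surfaces --- property~II) again. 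After one more base change $\mathbb W'_s$ has finite $\pi_1(S',s)$-orbit and (by the same Dehn-twist argument, now lifting twists from $X_s$) finite-order determinant, hence semi-simple algebraic monodromy group; Theorem~\ref{introthm2} for $f'$ shows $\mathbb W'_s$ essentially extends to some $\mathbb W'$ on $X'$, and then $p_*\mathbb W'$ essentially extends $\mathbb L_s\otimes\overline K$ to $X$. A final Galois descent yields a $K$-local system essentially extending $\mathbb L_s$, as required.

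The primitive case is a one-line application of Theorem~\ref{introthm2}; all the work is in the imprimitive case, and the two points that need care there are geometric. First, one must check that the weight cover $X'_s\to X_s$ really propagates to a \emph{good} holomorphic map $f'\colon X'\to S'$ --- that is, that the fibrewise branched covers along the punctures assemble into a smooth projective family over $S'$ with relative simple normal crossings boundary; here one uses that the ramification profile of $X'\to X_{S'}$ near the punctures is locally constant on $S'$, so that the points of $\overline{X'}$ over each puncture-section remain distinct. Second, one must re-establish that $\det\mathbb W'_s$ has finite order although the monodromy image of $\pi_1(S',s)$ in the mapping class group of $X'_s$ need not be of finite index; for this one pushes forward suitable powers of Dehn twists along simple closed curves of $X_s$ and pins down the values of $\det\mathbb W'_s$ on a generating set of $H_1(X'_s,\Z)$ by choosing dual curves on $X'_s$. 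One should also record that the initial reduction and the final Galois descent across the auxiliary finite extension of $K$ go through. The statement then refines~\cite[Prop.~2.3.4]{LLcan} precisely because Theorem~\ref{introthm2} has already absorbed the $H^2(\pi_1,K^\times)$-obstruction to lifting the Clifford projective representation.
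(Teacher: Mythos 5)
Your primitive branch is fine and is essentially the intended argument, but the imprimitive branch --- which is where all your work goes --- is internally inconsistent and cannot be repaired along the lines you sketch. Suppose, as in that branch, that the connected centre $T$ of $G^0=\mathrm{Mo}(\mathbb L_s,x)^0$ is a nontrivial torus acting with at least two distinct weights $\chi_1,\dots,\chi_k$. These weights form a single orbit under the finite group $G/G^0$; since the trivial character is a fixed point of that action, none of the $\chi_i$ is trivial, so $\chi_1(T)=\mathbb G_m$ and $T|_{V_1}$ is the \emph{full} scalar group $\mathbb G_m\cdot\mathrm{id}_{V_1}$. Consequently $\mathrm{Mo}(\mathbb W'_s)^0\supseteq \mathbb G_m\cdot\mathrm{id}_{V_1}$ is never semi-simple, and $\det\mathbb W'_s$ (Zariski dense in a group containing $t\mapsto t^{\dim V_1}$) necessarily has \emph{infinite} image. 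So the two properties you need to feed $\mathbb W'_s$ back into Theorem~\ref{introthm2} --- finite-order determinant and semi-simple algebraic monodromy group --- provably fail whenever the branch is entered. The Dehn-twist argument on $X'_s$ that you defer to the end is therefore not a delicate point to be checked but an impossibility, unless its successful execution is read as a proof by contradiction that $T=1$; and proving $T=1$ is exactly the content of the corollary that your case split was supposed to avoid. (Separately, even granting the branch, the spreading-out of the fibrewise weight cover $X'_s\to X_s$ to a family over some $S'$ is an extension problem for the subgroup $\pi_1(X'_s)\le\pi_1(X)$ with the same cocycle obstruction that Theorem~\ref{thm:maingroup} is designed to circumvent; asserting it from invariance of the conjugacy class is not enough.)

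The paper's route is much shorter and avoids all of this: Theorem~\ref{introthm2} requires neither absolute simplicity nor connectedness, only that $\mathrm{Mo}(\mathbb L_s,x)^0$ be semi-simple, and this is verified directly by Lemma~\ref{lem:semi-simplealg}, which reduces it to the admissibility of $\mathbb L_s$ restricted to every finite-index subgroup of $\pi_1(X_s,x)$; the determinants of the simple constituents arising there are rank-one representations with finite orbit, hence finite by \cite[Thm.~1.1]{BKMS18}. Your Dehn-twist computation is a special case of that citation and your Clifford induction duplicates, in a broken form, machinery (Propositions~\ref{prop:existextadmis} and~\ref{prop:finindexadmext}) that Theorem~\ref{introthm2} has already packaged. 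I would restructure the proof as: (1) check the semi-simplicity of the algebraic monodromy group via Lemma~\ref{lem:semi-simplealg}, being explicit about why the determinant of a constituent on a finite cover $X'_s$ of the fibre is forced to be finite; (2) quote Theorem~\ref{introthm2}, a) $\Rightarrow$ b).
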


\smallskip

Many of the results of our note are known to the experts in one form or another. In
particular we refer to the previous work by Jost--Zuo~\cite{JZ01} and
Katzarkov--Pantev~\cite{KP02}. Our motivation to write down a complete account of their
theory stems from our ongoing work on the arithmetic of $\ell$-adic local systems over
$p$-adic local fields. In that work we for example study an arithmetic variant of
non-abelian Hodge loci for
$\ell$-adic local systems.

\medskip

{\it Acknowledgment}. It is a pleasure to thank Alexander Petrov for numerous
enlightening discussions and for sharing his insights into isomonodromic deformations of
Hodge structures.  In particular, he  alerted us on  a gap in a previous version of Appendix~\ref{app:nonabhodge}.
We thank Claude Sabbah for kindly helping us with
references on variations of Hodge structure. We thank Daniel Litt for various helpful discussions on local
systems  in particular on the condition 5) in
  Theorem~\ref{thm:main}.
We are grateful to Takuro Mochizuki for explaining to us what is known and what can be expected about families of pluri-harmonic metrics.  We thank Xinwen Zhu for an enlightening discussion on the analogy between  Theorem~\ref{thm:main} and his  own theorem on the preservation of the de Rham property of a $p$-adic local system, see  \cite{LZ17}. 
It is a pleasure to acknowledge the influence of~\cite{LLcan} on our work.
Finally, we thank the referee for a friendly, precise and helpful report.

\section{Profinite completion of groups and Artin $K\pi1$ spaces}  \label{sec:groupth}

In this section we recall some classical properties of profinite group completion in the
context of fundamental groups of quasi-projective complex manifolds.

\subsection{Group theory}

We recall after Anderson~\cite{And74}  some criteria  under which
   a finitely generated  normal subgroup
$H\subset G$  of a finitely generated  abstract group $G$  induces an injective homomorphism $\widehat H\to \widehat G$ after profinite completion.

\smallskip

Note first that
\begin{quote}  $\widehat H\to \widehat G$ is injective if and only
if for any  normal  subgroup of finite index $H'\subset H$ there exists a  subgroup of finite index
$G'\subset G$ such that $G'\cap H \subset H'$.
\end{quote}
 We remark that we could equally request the existence of a normal subgroup of finite index $G' \subset G$ as any subgroup of finite index contains a normal such.
We  denote by $\Gamma$ the quotient group $G/H$.

We say that $\Gamma$ is
  {\it a successive
extension of finitely generated free groups}
 if there is a filtration
\[ (\star) \ \ \ \
\{1\}=\Gamma_0\subset   \Gamma_1\subset \Gamma_2 \subset \cdots \subset \Gamma_s = \Gamma
\]
by finitely generated subgroups $\Gamma_i$ such that $\Gamma_{i-1}$ is normal in $
\Gamma_{i}$ and
such that $\Gamma_i/\Gamma_{i-1}$ is a free group for all  $1\le i \le s$.
Note that if $\Gamma$ is a successive
extension of finitely generated free groups and $\Gamma'\subset \Gamma$ is a subgroup of
finite index,  then $\Gamma'$ is itself a successive
extension of finitely generated free groups.

\begin{lem}\label{lem:finitekill}
If $\Gamma$ is a successive
extension of finitely generated free groups,  $N>0$ and $i>0$ are integers, there exists a subgroup of
finite index $\Gamma'$ such that the restriction map
\[
H^i(\Gamma,\Z/N\Z) \to H^i(\Gamma',\Z/N\Z)
\]
vanishes.
\end{lem}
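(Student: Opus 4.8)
The plan is to argue by induction on the length $s$ of the filtration $(\star)$ of $\Gamma$, strengthening the assertion in the base case to allow an \emph{arbitrary finite $\Gamma$-module} of coefficients. For the inductive step one feeds the extension $1 \to H \to \Gamma \to Q \to 1$, with $H := \Gamma_{s-1}$ (a successive extension of finitely generated free groups of length $\le s-1$) and $Q := \Gamma/\Gamma_{s-1}$ (finitely generated free), into the Lyndon--Hochschild--Serre spectral sequence; since $Q$ has cohomological dimension $\le 1$ this spectral sequence degenerates.

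\emph{Base case ($\Gamma$ finitely generated free).} For $i \ge 2$ one has $H^i(\Gamma, M) = 0$ for every module $M$, so there is nothing to prove. For $i = 1$ and $M$ a finite $\Gamma$-module, let $\Gamma_0 \subseteq \Gamma$ be the finite-index normal subgroup acting trivially on $M$; then $\Hom(\Gamma_0, M)$ is finite (as $\Gamma_0^{\mathrm{ab}}$ is finitely generated and $M$ finite), so $\Gamma' := \bigcap_{\psi \in \Hom(\Gamma_0, M)} \ker\psi$ has finite index, and any $1$-cocycle representing a class of $H^1(\Gamma, M)$ restricts to an element of $\Hom(\Gamma_0, M)$, hence to $0$ on $\Gamma'$; thus $H^1(\Gamma, M) \to H^1(\Gamma', M)$ vanishes. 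This already contains Lemma~\ref{lem:finitekill} for $\Gamma$ free.

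\emph{The key auxiliary point, which I expect to be the main obstacle.} In the inductive step one needs that a finite-index subgroup $H_* \subseteq H = \Gamma_{s-1}$ supplied by the induction hypothesis can be detected inside $\Gamma$, i.e.\ that $H \cap \Gamma_1 \subseteq H_*$ for some finite-index $\Gamma_1 \subseteq \Gamma$ (equivalently, that $\widehat H \to \widehat\Gamma$ is injective in the sense of the criterion recalled above). One first passes to the intersection of the finitely many $\Gamma$-conjugates of $H_*$, so that $H_* \trianglelefteq \Gamma$; then $1 \to H/H_* \to \Gamma/H_* \to Q \to 1$ has finite kernel and free quotient, so it splits because free groups are projective. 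A splitting $\sigma \colon Q \to \Gamma/H_*$ has image of finite index meeting $H/H_*$ trivially, and the preimage $\Gamma_1 \subseteq \Gamma$ of $\sigma(Q)$ is then finite-index with $\Gamma_1 \cap H \subseteq H_*$.

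\emph{Inductive step ($s \ge 2$, trivial coefficients $A := \Z/N\Z$).} Since $H$ and its finite-index subgroups have finite aspherical classifying spaces, all the cohomology groups appearing below are finite. Degeneracy of the spectral sequence yields short exact sequences, natural in pairs $\Gamma_1 \subseteq \Gamma$ with $H_1 := \Gamma_1 \cap H$ and $Q_1 := \Gamma_1/H_1$,
\[ 0 \to H^1(Q, H^{i-1}(H,A)) \xrightarrow{\ \iota\ } H^i(\Gamma, A) \xrightarrow{\ \rho\ } H^i(H, A)^Q \to 0 \]
with $\rho$ induced by restriction to $H$. First, apply the induction hypothesis to $H$ to obtain $H_* \subseteq H$ with $H^i(H,A) \to H^i(H_*, A)$ zero, and use the auxiliary point to find finite-index $\Gamma_1 \subseteq \Gamma$ with $\Gamma_1 \cap H \subseteq H_*$; naturality of the sequence then forces the image of $H^i(\Gamma, A) \to H^i(\Gamma_1, A)$ to lie in $\im(\iota_1) \cong H^1(Q_1, H^{i-1}(H_1, A))$. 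Second, apply the base case to the free group $Q_1$ with the finite module $H^{i-1}(H_1, A)$ to find finite-index $Q_2 \subseteq Q_1$ killing $H^1(Q_1, H^{i-1}(H_1,A)) \to H^1(Q_2, H^{i-1}(H_1,A))$, and let $\Gamma' \subseteq \Gamma_1$ be the preimage of $Q_2$, so that $\Gamma' \cap H_1 = H_1$; naturality now shows $H^i(\Gamma_1, A) \to H^i(\Gamma', A)$ annihilates $\im(\iota_1)$. Composing the two restrictions gives $H^i(\Gamma, A) \to H^i(\Gamma', A)$ zero, completing the induction. The remaining verifications — finiteness of the relevant $H^q$'s, finite generation at each stage, and commutativity of the naturality squares for the spectral sequence — are routine.
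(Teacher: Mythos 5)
Your proof is correct. The paper does not actually prove this lemma: it simply cites it as Exercise~2(d) of \cite[I.2.6]{Ser94}, so there is nothing to compare line by line; your argument is precisely the standard intended solution to that exercise (induction on the length of the filtration $(\star)$, the base case for free groups with arbitrary finite coefficient modules, and the degenerate Lyndon--Hochschild--Serre spectral sequence for $1\to\Gamma_{s-1}\to\Gamma\to Q\to 1$ with $Q$ of cohomological dimension $\le 1$). Two points you flag as the delicate ones are indeed handled correctly: the detection of the finite-index subgroup $H_*\subseteq\Gamma_{s-1}$ inside $\Gamma$ via normalization and the splitting of $1\to H/H_*\to\Gamma/H_*\to Q\to 1$ (this is exactly Anderson's argument behind Proposition~\ref{prop:ander1}, i.e.\ injectivity of $\widehat{\Gamma_{s-1}}\to\widehat\Gamma$), and the finiteness of $H^{q}(H_1,\Z/N\Z)$ needed to feed the base case, which follows since finite-index subgroups of successive extensions of finitely generated free groups are again such and hence of type $FP_\infty$, so their cohomology with $\Z/N\Z$-coefficients is a finitely generated $N$-torsion abelian group. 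No gaps.
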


\begin{proof}
  This is Exercise 2)(d) in \cite[I.2.6]{Ser94}.
\end{proof}

\begin{prop}\label{prop:ander1}
If the surjective homomorphism $G\to \Gamma$ has a section or   if $\Gamma$ is a successive
extension of finitely generated free groups,  then $\widehat H\to\widehat G$ is injective.
\end{prop}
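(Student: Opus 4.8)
The plan is to use the criterion stated just above the proposition: $\widehat H \to \widehat G$ is injective if and only if for every normal subgroup of finite index $H' \subset H$ there is a subgroup of finite index $G' \subset G$ with $G' \cap H \subset H'$. So in both cases I fix a normal finite-index $H' \triangleleft H$ and must produce the required $G'$.

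\emph{Case of a section.} Suppose $\sigma \colon \Gamma \to G$ splits $G \to \Gamma$, so $G = H \rtimes \sigma(\Gamma)$. Given $H' \triangleleft H$ of finite index, first shrink $H'$ so that it is moreover normal in $G$: since $H$ is finitely generated, it has only finitely many subgroups of a given finite index, so the intersection $H'' = \bigcap_{g \in G} gH'g^{-1}$ (a finite intersection as $G$ acts through the finite group $\mathrm{Aut}$ of such subgroups, or more simply through $G/$(centralizer), anyway a characteristic-type argument) is again of finite index in $H$ and is normal in $G$; it suffices to handle $H''$, and replacing $H'$ by $H''$ we may assume $H' \triangleleft G$. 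Now set $G' = H' \cdot \sigma(\Gamma)$. Since $H' \triangleleft G$ this is a subgroup, and $G/G' \cong H/H'$ is finite, so $G'$ has finite index. Finally $G' \cap H = H' \cdot (\sigma(\Gamma) \cap H) = H' \cdot \{1\} = H'$ because the decomposition $G = H \rtimes \sigma(\Gamma)$ is a genuine semidirect product; hence $G' \cap H \subset H'$ as required.

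\emph{Case of a successive extension of free groups.} Again fix $H' \triangleleft H$ of finite index, and as above reduce to the case $H' \triangleleft G$. Then $\Gamma$ acts on the finite group $Q = H/H'$ by conjugation, which is classified by a homomorphism $\rho \colon \Gamma \to \mathrm{Out}(Q)$ (or, after passing to a finite-index subgroup of $\Gamma$ that acts by inner automorphisms and absorbing this, one can even arrange the action to be trivial; in any case, replacing $\Gamma$ by a finite-index subgroup — legitimate, since such a subgroup is again a successive extension of finitely generated free groups, and pulling back to a finite-index subgroup of $G$ — we may assume $\Gamma$ acts trivially on $Q$). The extension $1 \to Q \to G/H' \to \Gamma \to 1$ then defines a class in $H^2(\Gamma, Z(Q))$ with the trivial action, and more precisely the obstruction to splitting $G/H' \to \Gamma$ — equivalently to finding $G' \subset G$ with $G' \supset H'$, $G'/H' \xrightarrow{\sim} \Gamma$, which would give $G' \cap H = H'$ — is governed by the cohomology of $\Gamma$ with coefficients in the finite abelian group $Z(Q)$ (together with, for the non-central part, an $H^1$ with coefficients in a finite group, which can be killed similarly by passing to a further finite-index subgroup). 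By Lemma~\ref{lem:finitekill} applied with $N = |Z(Q)|$ (resp.\ $N = |Q|$), there is a finite-index $\Gamma' \subset \Gamma$ on which the relevant restriction maps $H^i(\Gamma, \Z/N\Z) \to H^i(\Gamma', \Z/N\Z)$ vanish; pulling the extension back along $\Gamma' \hookrightarrow \Gamma$ thus kills the obstruction, so the pulled-back extension $1 \to Q \to G''/H' \to \Gamma' \to 1$ splits, where $G''$ is the preimage of $\Gamma'$ in $G/H$. A splitting gives a subgroup $G' \subset G$ of finite index with $G' \cap H = H'$, which is what we wanted.

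\emph{Main obstacle.} The delicate point is the bookkeeping around non-abelian coefficients: the conjugation action of $G$ on the finite group $Q = H/H'$ need not be trivial or even inner, so $H^2$ with constant coefficients does not directly control the extension. The clean way around this is the reduction steps — first enlarge/shrink $H'$ so it is normal in $G$, then pass to a finite-index subgroup of $\Gamma$ (and correspondingly of $G$) that trivializes the outer action on $Q$, using that "successive extension of finitely generated free groups" is inherited by finite-index subgroups — so that by the time we invoke Lemma~\ref{lem:finitekill} the coefficient module is genuinely a finite abelian group with trivial action. One must also be a little careful that $H$ finitely generated is what lets us make $H'$ normal in $G$ of finite index; this is where the finite-generation hypothesis on $H$ enters. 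After these reductions the argument is the standard "kill the extension class by restriction" trick, and Lemma~\ref{lem:finitekill} does exactly that job.
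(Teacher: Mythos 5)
Your proof is correct, but for the successive-extension case it takes a genuinely different route from the paper. The paper handles the section case by citing Anderson's Proposition~4 (your semidirect-product argument $G'=H'\cdot\sigma(\Gamma)$ after making $H'$ normal in $G$ is essentially that proof), and then deduces the second case by induction on the length $s$ of the filtration of $\Gamma$: since a surjection onto a free group splits, the case $s=1$ reduces to the section case, and for $s\ge 2$ one factors $\widehat H\to\widehat G_{s-1}\to\widehat G$ and uses that injectivity is preserved under composition. Your route instead attacks the criterion directly via obstruction theory: normalize $H'$ in $G$, trivialize the outer action of $\Gamma$ on $Q=H/H'$ by passing to a finite-index subgroup, and kill the resulting class in $H^2(\Gamma',Z(Q))$ using Lemma~\ref{lem:finitekill}. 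This works and has the virtue of making visible exactly why the hypothesis on $\Gamma$ is the right one (it is precisely the condition under which $H^2$ with finite coefficients can be killed by restriction); the paper's induction is shorter and avoids cohomology entirely by exploiting the section case twice. The one place where your write-up needs tightening is the bookkeeping for the non-abelian kernel $Q$: the set of extensions of $\Gamma$ by $Q$ with a fixed (trivial) outer action is only a \emph{torsor} under $H^2(\Gamma,Z(Q))$, and your parenthetical about "an $H^1$ with coefficients in a finite group" is not quite the right mechanism. The cleanest fix is to pass from $E=G/H'$ to the centralizer $C_E(Q)$, which has finite index (as $E/C_E(Q)$ embeds in the finite group $\mathrm{Aut}(Q)$) and meets $Q$ in $Z(Q)$; this produces a genuine central extension of a finite-index subgroup of $\Gamma$ by the finite abelian group $Z(Q)$, whose class in $H^2$ is then killed by Lemma~\ref{lem:finitekill} (applied to each cyclic summand of $Z(Q)$). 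With that adjustment, and noting as you do that a finite-index subgroup of a successive extension of finitely generated free groups is again one, the argument is complete.
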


\begin{proof}\label{prop:ander2}
 The first part is  \cite[Prop.\ 4]{And74}. For the second part let us assume that
 $\Gamma$ is an extension of $s>0$ finitely generated free groups as above.
We first assume that $s=1$ that is that  $\Gamma$ is free. Then the surjection $G\to \Gamma$
has a section and we can conclude as before.  For $s\ge 2$, we define $G_{s-1}$ as the inverse image in $G$ of $\Gamma_{s-1}$.
By induction on $s$ we can assume that $\hat H\to \hat G_{s-1}$ is injective. By the case $s=1$,  $\hat G_{s-1}\to \hat G_s$ is injective. Thus,
the composite $\hat H\to \hat G_{s-1} \to \hat G_s$ is injective as well.
\end{proof}

The next proposition is  \cite[Prop.\ 3]{And74}.
\begin{prop}\label{prop:ander3}
  If $\widehat H$ has trivial center then $\widehat H\to\widehat G$ is injective.
\end{prop}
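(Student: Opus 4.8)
The plan is to verify the criterion recalled just above the statement: given a finite-index normal subgroup $H'\subseteq H$, I must produce a finite-index subgroup $G'\subseteq G$ with $G'\cap H\subseteq H'$. I would first transport $H'$ to the profinite world, letting $U\subseteq\widehat H$ be its closure; then $U$ is open and normal, the composite $H\to\widehat H\to\widehat H/U$ is the projection onto $H/H'$, so an element of $H$ maps into $U$ exactly when it lies in $H'$. Since $H$ is finitely generated, $\widehat H$ is topologically finitely generated, hence has only finitely many open subgroups of any given index; intersecting the open normal subgroups of index at most $[\widehat H:U]$ produces an \emph{characteristic} open subgroup contained in $U$, and more generally shows that the characteristic open subgroups of $\widehat H$ are cofinal among open neighbourhoods of $1$.

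Next I would use the hypothesis $Z(\widehat H)=\{1\}$. For a characteristic open subgroup $W\subseteq\widehat H$ set $P_W=\{x\in\widehat H:[x,y]\in W\text{ for all }y\in\widehat H\}$, the preimage in $\widehat H$ of the centre of the finite group $\widehat H/W$; it is a closed subgroup, and the family $\{P_W\}_W$ is filtered because $P_{W_1\cap W_2}\subseteq P_{W_1}\cap P_{W_2}$. By cofinality of the characteristic open $W$ one gets $\bigcap_W P_W=\{x\in\widehat H:[x,\widehat H]=\{1\}\}=Z(\widehat H)=\{1\}$. As $\widehat H$ is compact and $U$ an open neighbourhood of $1$, the finite intersection property for the closed sets $P_W\cap(\widehat H\setminus U)$ yields a single characteristic open subgroup $W_0\subseteq\widehat H$ with $P_{W_0}\subseteq U$.

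Finally I would bring in normality of $H$ in $G$. Conjugation gives $G\to\operatorname{Aut}(H)$, and functoriality of profinite completion a homomorphism $\rho\colon G\to\operatorname{Aut}(\widehat H)$ sending $h\in H$ to the inner automorphism by the image of $h$ in $\widehat H$. Since $W_0$ is characteristic it is stable under $\operatorname{Aut}(\widehat H)$, so $G$ acts on the finite group $Q=\widehat H/W_0$; I put $G'=\ker(G\to\operatorname{Aut}(Q))$, which has finite index in $G$. For $g\in G'\cap H$, the automorphism of $Q$ induced by $g$ is trivial, but it is also conjugation by the image of $g$ in $Q$; hence that image lies in $Z(Q)$, so the image of $g$ in $\widehat H$ lies in the preimage $P_{W_0}$, which is contained in $U$. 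By the first paragraph this forces $g\in H'$. Therefore $G'\cap H\subseteq H'$, as required.

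The only step that is not formal bookkeeping is the middle one, where triviality of $Z(\widehat H)$ must be upgraded — using finite generation of $H$ (so that characteristic open subgroups are cofinal) together with compactness of $\widehat H$ — from the identity $\bigcap_W P_W=\{1\}$ to the existence of a single $W_0$ with $P_{W_0}\subseteq U$. I expect this to be the point requiring the most care; the remaining arguments are routine manipulations with profinite completions and their functoriality, and one may if desired replace $G'$ by a finite-index normal subgroup of $G$, as noted in the remark following the criterion.
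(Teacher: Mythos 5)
Your argument is correct. The paper itself gives no proof here: it simply cites Anderson \cite[Prop.~3]{And74}, and what you have written is in substance a complete, self-contained proof of that cited result, along the standard lines (it is essentially Anderson's argument). All the steps check out: the closure $U$ of the image of $H'$ is indeed the kernel of $\widehat H\to H/H'$, so membership of $h\in H$ in $U$ detects $h\in H'$; topological finite generation of $\widehat H$ gives the cofinal family of open subgroups stable under automorphisms; and the compactness/filteredness argument correctly upgrades $\bigcap_W P_W=Z(\widehat H)=\{1\}$ to a single $P_{W_0}\subseteq U$. One small point of hygiene: ``characteristic'' should be read as ``stable under all \emph{continuous} automorphisms'' (openness of a subgroup need not be preserved by an abstract automorphism a priori), but this costs nothing, since the automorphisms $\rho(g)$ of $\widehat H$ induced by conjugation in $G$ are continuous with continuous inverse by functoriality of profinite completion, hence preserve the intersection of all open subgroups of index at most $n$. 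With that reading, your $W_0$ is $G$-stable and the final step goes through as written.
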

\subsection{Geometric examples} \label{sec:Artin}

Let $X $ and $S$ be connected complex manifolds and let $f\colon X\to S$ be a
  good holomorphic
map  as defined in Section~\ref{sec:intro}. Fix a point $x\in X$ and set
$s=f(x)$.
 By Thom's first isotopy lemma~\cite[Thm.~I.1.5]{GM88} or Ehresmann's theorem in the proper case, $f$  is a
 topological  fiber bundle.
We obtain  an exact sequence of homotopy groups
\begin{equation}\label{eq:longexhom}
\ldots \to  \pi_2(X,x) \to  \pi_2(S,x)   \to \pi_1(X_s,x) \to \pi_1(X,x)\to \pi_1(S,s) \to 1
\end{equation}

We say that $S$ is {\it an Artin} $K\pi 1$ if
$\pi_i(S)=0$ for $i>1$ and $\pi_1(S)$ is a  successive
extension of finitely generated free groups. Recall that Artin showed
\cite[4.6]{SGA4.3}
 that if
$S$ underlies a smooth algebraic variety,  Artin $K\pi1$'s  form a base of the Zariski topology.

\begin{prop}\label{prop:artKpiki}
  Let $\mathbb L$ be a local system on $S$ with finite fibers.
If $S$ is an Artin $K\pi 1$ and $i>0$ is an integer, there exists a finite \'etale cover  $S'\to
S$ such that the pull-back map
\[
H^i(S,\mathbb L ) \to H^i(S' ,\mathbb L)
\]
vanishes.
\end{prop}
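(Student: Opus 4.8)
The plan is to reduce the statement to the case of constant finite coefficients, which is handled by Lemma~\ref{lem:finitekill}, after passing to a finite \'etale cover that trivialises $\mathbb L$. Since $\mathbb L$ has finite fibres, its monodromy representation $\rho\colon\pi_1(S,s)\to\mathrm{Aut}(\mathbb L_s)$ has finite image, because $\mathrm{Aut}(\mathbb L_s)$ is a finite group; hence $\ker\rho$ has finite index in $\pi_1(S,s)$. Let $S_1\to S$ be the connected finite \'etale cover corresponding to $\ker\rho$. On $S_1$ the pulled-back local system $\mathbb L|_{S_1}$ has trivial monodromy, so it is isomorphic to the constant sheaf $\underline M$ with $M:=\mathbb L_s$ a finite abelian group. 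Moreover $S_1$ is again an Artin $K\pi1$: it is a covering of $S$, so $\pi_i(S_1)=\pi_i(S)=0$ for $i>1$, and $\pi_1(S_1)=\ker\rho$, being of finite index in a successive extension of finitely generated free groups, is again such an extension, as recalled before Lemma~\ref{lem:finitekill}.

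Next I would reduce to cyclic trivial coefficients. Since $S_1$ is aspherical we have $H^i(S_1,\underline M)\cong H^i(\pi_1(S_1),M)$ with $M$ carrying the trivial action, and writing $M\cong\bigoplus_{j=1}^r\Z/N_j\Z$ gives $H^i(\pi_1(S_1),M)\cong\bigoplus_{j=1}^r H^i(\pi_1(S_1),\Z/N_j\Z)$. By Lemma~\ref{lem:finitekill}, for each $j$ there is a subgroup of finite index $\Gamma_j\subset\pi_1(S_1)$ on which the restriction of $H^i(\pi_1(S_1),\Z/N_j\Z)$ vanishes; putting $\Gamma':=\bigcap_{j=1}^r\Gamma_j$, still of finite index, transitivity of restriction shows that $H^i(\pi_1(S_1),M)\to H^i(\Gamma',M)$ is the zero map. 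Let $S_2\to S_1$ be the connected finite \'etale cover corresponding to $\Gamma'$; since $S_1$ and $S_2$ are aspherical, the pullback $H^i(S_1,\underline M)\to H^i(S_2,\underline M)$ is identified with that zero map, hence vanishes, and $\mathbb L|_{S_2}\cong\underline M$.

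Finally I would set $S':=S_2$, which is finite \'etale over $S$ as a composition of two finite \'etale covers. By functoriality of pullback on cohomology with local coefficients, the map $H^i(S,\mathbb L)\to H^i(S',\mathbb L)$ factors through $H^i(S_1,\mathbb L|_{S_1})\to H^i(S_2,\mathbb L|_{S_2})$, and the latter vanishes by the previous paragraph once the trivialisation $\mathbb L|_{S_1}\cong\underline M$ is transported along $S_2\to S_1$; therefore $H^i(S,\mathbb L)\to H^i(S',\mathbb L)$ is zero. The only genuine point in the argument is the passage from trivial to arbitrary finite abelian coefficients, and this is resolved by the trivialisation step together with the stability of the Artin $K\pi1$ property under finite \'etale covers; once $\mathbb L$ has been made constant, the rest is a formal consequence of Lemma~\ref{lem:finitekill} and the comparison between cohomology of aspherical spaces and group cohomology.
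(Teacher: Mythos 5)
Your proof is correct. It follows the same basic strategy as the paper's (which simply identifies $H^i(S,\mathbb L)$ with $H^i(\pi_1(S,s),\mathbb L_s)$ via asphericity and the Hochschild--Serre spectral sequence and then invokes Lemma~\ref{lem:finitekill}), but you add a genuinely useful reduction: the paper applies Lemma~\ref{lem:finitekill} directly to $H^i(\pi_1(S,s),\mathbb L_s)$ even though the lemma is only stated for the constant module $\Z/N\Z$, implicitly relying on the version of Serre's exercise for arbitrary finite coefficient modules. By first passing to the finite cover $S_1$ trivialising $\mathbb L$ (legitimate, since the monodromy of a local system with finite fibres is finite, and the Artin $K\pi1$ property is stable under finite covers by the remark preceding Lemma~\ref{lem:finitekill}) and then decomposing the stalk into cyclic summands, you make the application of the lemma literal. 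All the intermediate steps --- asphericity of $S_1$, compatibility of pullback with restriction in group cohomology, intersecting the finitely many finite-index subgroups, and composing the two covers --- are correctly justified, so the argument is complete.
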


\begin{proof}

  As  $S$ is an Artin $K\pi 1$  the morphism
  $
H^i(\pi_1(S,s) ,\mathbb L_s)\xrightarrow \sim H^i(S,\mathbb L)
  $
  stemming from the Hochschild-Serre spectral sequence is an isomorphism.  We apply
 Lemma~\ref{lem:finitekill}  to the left term.
 \end{proof}

\begin{prop} \label{prop:Kpi1}
If $S$ is an Artin $K\pi 1$,  then $\pi_1(X_s,x) \to \pi_1(X,x)$ is injective and remains
injective after profinite completion.
\end{prop}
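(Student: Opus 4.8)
The plan is to extract the relevant piece of the long exact sequence of homotopy groups \eqref{eq:longexhom} and feed it into the group-theoretic criterion of Anderson recalled above. Since $f\colon X\to S$ is a good holomorphic map, it is a topological fibration (Thom or Ehresmann), so \eqref{eq:longexhom} gives
\[
\pi_2(S,x) \to \pi_1(X_s,x) \xrightarrow{\iota} \pi_1(X,x) \to \pi_1(S,s) \to 1.
\]
The hypothesis that $S$ is an Artin $K\pi1$ means $\pi_i(S)=0$ for $i>1$; in particular $\pi_2(S,x)=0$, so $\iota\colon \pi_1(X_s,x)\to\pi_1(X,x)$ is already injective on the level of abstract groups. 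This identifies $H:=\pi_1(X_s,x)$ with a normal subgroup of $G:=\pi_1(X,x)$ with quotient $\Gamma:=\pi_1(S,s)$.

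For the statement about profinite completions, I would invoke Proposition~\ref{prop:ander1}: since $S$ is an Artin $K\pi1$, by definition $\Gamma=\pi_1(S,s)$ is a successive extension of finitely generated free groups, which is exactly the hypothesis needed for the second alternative in that proposition to apply. This yields injectivity of $\widehat H\to\widehat G$, i.e.\ $\pi_1(X_s,x)\to\pi_1(X,x)$ remains injective after profinite completion. Before applying the proposition one should note that $H$ is finitely generated (the fibre $X_s$ is a smooth quasi-projective variety, hence has finitely generated fundamental group) and normal in $G$ (clear from the exactness of the homotopy sequence), so the setup of Anderson's criterion is met.

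The only genuine point requiring care — and the one I would expect to be the main (minor) obstacle — is the justification that $\pi_2(S,x)=0$ is legitimately available, i.e.\ that the definition of Artin $K\pi1$ in this subsection really forces all higher homotopy to vanish, not just $\pi_1$ to be of the right shape; this is immediate from the definition given (``$\pi_i(S)=0$ for $i>1$''), so there is no real difficulty. One should also be slightly careful about base points when reading off the segment of \eqref{eq:longexhom}, but since $S$ and its fibres are connected this is harmless. Everything else is a direct citation of Proposition~\ref{prop:ander1}.
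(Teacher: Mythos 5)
Your proof is correct and follows exactly the paper's argument: injectivity of $\pi_1(X_s,x)\to\pi_1(X,x)$ from $\pi_2(S)=0$ via the homotopy exact sequence~\eqref{eq:longexhom}, and injectivity after profinite completion from the second alternative of Proposition~\ref{prop:ander1}, since $\pi_1(S,s)$ is a successive extension of finitely generated free groups. Your added remarks on finite generation and normality of $H$ are correct housekeeping that the paper leaves implicit.
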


\begin{proof}
By definition $\pi_2(S,s)=0$ so $\pi_1(X_s,x)\to \pi_1(X,x)$ is injective in view of~\eqref{eq:longexhom}.
 We apply  Proposition~\ref{prop:ander1}.

 \end{proof}

\begin{prop} \label{prop:hyp}
If $X_s$ is a hyperbolic Riemann surface, the  map $\pi_1(X_s,x) \to \pi_1(X,x)$ is injective and remains
injective after profinite completion.
\end{prop}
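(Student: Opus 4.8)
The plan is to treat the two assertions of Proposition~\ref{prop:hyp} separately, in both cases exploiting that the fundamental group of a hyperbolic Riemann surface, and its profinite completion, are centerless. Throughout I keep the point $x\in X_s$, $s=f(x)$.

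\emph{Abstract injectivity.} By the homotopy exact sequence~\eqref{eq:longexhom}, the kernel of $\pi_1(X_s,x)\to\pi_1(X,x)$ is the image of the connecting map $\partial\colon\pi_2(S,x)\to\pi_1(X_s,x)$. I would first recall the classical fact that for a topological fibration $F\to E\xrightarrow{p} B$ the image of $\pi_2(B)\to\pi_1(F)$ is central in $\pi_1(F)$: indeed $p_*\colon\pi_2(E,F)\to\pi_2(B)$ is an isomorphism intertwining the $\pi_1(F)$-action on $\pi_2(E,F)$ with the $\pi_1(F)$-action on $\pi_2(B)$, and the latter is trivial since $p|_F$ is constant; hence $\pi_1(F)$ acts trivially on $\pi_2(E,F)$, and the crossed-module identity $\partial(a\cdot\beta)=a\,\partial(\beta)\,a^{-1}$ then forces $\partial(\beta)$ to commute with every $a\in\pi_1(F)$. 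Applying this to $f\colon X\to S$ with $F=X_s$, the image of $\partial$ lies in the center of $\pi_1(X_s,x)$. Since $X_s$ is a hyperbolic Riemann surface, $\pi_1(X_s,x)$ is either a free group of rank $\ge 2$ (when $X_s$ is non-compact) or the fundamental group of a compact Riemann surface of genus $\ge 2$ (when $X_s$ is compact); in either case it has trivial center. Therefore $\partial(\pi_2(S,x))=\{1\}$, so $\pi_1(X_s,x)\hookrightarrow\pi_1(X,x)$, with image the normal subgroup $\ker(\pi_1(X,x)\to\pi_1(S,s))$.

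\emph{Profinite injectivity.} Now $H:=\pi_1(X_s,x)$ is a finitely generated normal subgroup of $G:=\pi_1(X,x)$, so I would invoke Proposition~\ref{prop:ander3}: it suffices to show that $\widehat H$ has trivial center. When $H$ is free of rank $\ge 2$, $\widehat H$ is the free profinite group of rank $\ge 2$, which is centerless. When $H$ is the fundamental group of a compact Riemann surface of genus $\ge 2$, its profinite completion is again centerless. In both cases Proposition~\ref{prop:ander3} yields that $\widehat{\pi_1(X_s,x)}\to\widehat{\pi_1(X,x)}$ is injective, completing the proof.

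The only ingredient that goes beyond the tools recalled in this section is the centerlessness of the profinite completion of a genus $\ge 2$ surface group: the free-group case is elementary, but this one is the (well-known, yet not entirely formal) fact that the profinite fundamental group of a hyperbolic curve is slim; I would simply cite it. Everything else reduces to the given homotopy exact sequence~\eqref{eq:longexhom}, the centrality of $\partial\pi_2$ in a fibration, and Anderson's criterion in the form of Proposition~\ref{prop:ander3}.
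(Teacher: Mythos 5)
Your proof is correct and follows essentially the same route as the paper: both arguments kill the image of $\pi_2(S,s)\to\pi_1(X_s,x)$ using the structure of hyperbolic surface groups (the paper phrases this via the absence of nontrivial normal abelian subgroups, you via centrality of $\partial\pi_2$ plus triviality of the center), and both then apply Proposition~\ref{prop:ander3} after noting that $\widehat{\pi_1(X_s,x)}$ is centerless, for which the paper cites \cite[Prop.~18]{And74}.
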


\begin{proof}
By  \cite[Prop.\ 18]{And74}, the centers of $\pi_1(X_s,x)$ and of $\widehat{\pi_1(X_s,x)}$ are trivial.  Thus, by Proposition~\ref{prop:ander3}
it is sufficient to see that the homomorphism  $\pi_2(S,s)\to \pi_1(X_s,x)$ is trivial,
which follows as by   hyperbolic geometry
  $\pi_1(X_s,x)$ does not contain any normal non-trivial abelian
 subgroup. To see this classical fact, one may e.g. apply
   \cite[Thm.~2.3.6]{Kat92} and the triviality of the center of any finite index subgroup of $\pi_1(X_s,x)$.

\end{proof}

\section{An integral version of Deligne's Fixed Part Theorem}

\subsection{Integral Fixed Part Theorem} \label{ss:IFP}

In this section we recall the integral version of Deligne's Fixed Part Theorem in order to
motivate our  non-abelian version, Theorem~\ref{thm:main}. The reason why we
discuss the integral version of this classical result here is that while in the abelian case we can always make a
rational class integral by multiplication with a positive integer, this is not possible in the
non-abelian case. So one has to understand the integral results in the abelian world in
order to see what one can hope for in the non-abelian world.

Let $X $ and $S$ be quasi-projective complex manifolds and let $f\colon X\to S$ be a projective
holomorphic submersive map with connected fibers. Fix a point  $\tilde s$ in the universal cover $\tilde
S$  of $S$ and let $s\in S$ be its image. We say that
{\it a property  essentially holds }
if  the property holds after replacing $S$ by  a  finite  cover  $S' \to S$  with
$\tilde S\to S'\to  S$, $s$ by the image of $\tilde s$ in $S'$ and $X$ by $X\times_S S'$.
We denote by  $X_s$ the  fiber over $s$.

\smallskip

Let $Z$ be a  quasi-projective complex manifold.  A class $\xi\in H^{2i}(Z,\Q)$ is said to be a {\it Hodge class} if
\[
  \xi \in \im H^{2i}(Z, \Z)\cap \im F^iH^{2i}(\overline Z, \C) \subset H^{2i}(Z,\C)
\]
where $F$ is the Hodge filtration and $\overline Z$ is a good compactification of $Z$, i.e.\ a
complex manifold which is projective
and contains $Z$ as a Zariski open submanifold \cite[Thm.~~3.2.5]{Del74}.

\smallskip

As $f$ is a topological fiber bundle, $\pi_1(S,s)$ acts on the cohomology $H^i(X_s,\Q)$. We denote by $\widehat S$  the formal completion  of $S$ along $s$ and by $\widehat X$ the
formal  completion of $X$ along $X_s$.

\begin{thm}[Deligne's Fixed Part Theorem] \label{thm:del_fixpart}  Let $f: X\to S$
   be a projective, submersive morphism of quasi-projective complex manifolds  with $S$ as in I) in Section~\ref{sec:intro}.
Let $s$ be a complex point of $S$.
Let $\xi\in H^{2i}(X_s, \Q)$ be a Hodge class
on  $X_s$.
Then the following conditions are equivalent:
\begin{itemize}
\item[1)]   the orbit $\pi_1(S,s) \cdot \xi$ is finite;
\item[2)]  $\xi$ essentially extends to a  class in $H^{2i}(X,\C)$;
\item[3)]   $\xi$ essentially  extends to a  Hodge class in $H^{2i}(X,\Q)$;
\item[4)]  $\xi$  extends to  $H^{2i}(X_\Delta,\Omega^{\ge i}_{X_\Delta})$, where $\Delta$ is a
  contractible open neighborhood of $s$;
\item[5)] the Gau{\ss}-Manin flat deformation of $\xi$ in $H^{2i}_{dR}(\hat X/\hat S)$ lies in the subbundle  $$F^iH^{2i}(\hat X/\hat S):=H^{2i}(\hat X/\hat S, \Omega^{\ge i}_{\hat X/\hat S}) \subset  H^{2i}_{dR}(\hat X/\hat S).$$
\end{itemize}
\end{thm}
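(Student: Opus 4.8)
The plan is to prove $3)\Rightarrow 2)\Rightarrow 1)$, then the chain $1)\Rightarrow 4)\Leftrightarrow 5)\Rightarrow 1)$, and finally the substantive implication $1)\Rightarrow 3)$. Since 1) is equivalent to the stabiliser of $\xi$ in $\pi_1(S,s)$ having finite index, and a finite-index subgroup of a successive extension of finitely generated free groups is again one, I may in every ``$1)\Rightarrow\dots$'' step first replace $S$ by the finite \'etale Galois cover attached to a normal finite-index subgroup of this stabiliser (still an Artin $K\pi1$), after which $\xi$ becomes $\pi_1(S,s)$-invariant, i.e.\ a global flat section of the local system $R^{2i}f_*\Q$. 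The implication $3)\Rightarrow 2)$ is trivial, and for $2)\Rightarrow 1)$ one uses that the Leray edge homomorphism $H^{2i}(X,\C)\to H^0(S,R^{2i}f_*\C)$ identifies the restriction to $X_s$ of a global class with a $\pi_1(S,s)$-invariant vector, so an essential extension of $\xi$ to $H^{2i}(X,\C)$ makes $\xi$ invariant under a finite-index subgroup.

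For $1)\Rightarrow 4)$, after the reduction $\xi$ is a global flat section of the polarised variation of Hodge structure $R^{2i}f_*\Q$, so by Deligne's theorem of the fixed part for variations \cite[Sec.~4.1]{Del71} the fixed part is a sub-variation; hence $\xi$, being of type $(i,i)$ at $s$, is of type $(i,i)$, in particular in $F^i$, over all of $S$, and since $\Delta$ is contractible this descends to the original $S$ and yields 4) (and also 5)). The equivalence $4)\Leftrightarrow 5)$ is a matter of unwinding definitions: both assert that the Gau{\ss}--Manin flat deformation of $\xi$, over the contractible $\Delta$ resp.\ over $\widehat S$, lies in the holomorphic Hodge subbundle $F^i\mathcal H^{2i}_{\mathrm{dR}}(X/S)$; and $5)\Rightarrow 4)$ holds because the image of that deformation in the quotient bundle $\mathcal H^{2i}_{\mathrm{dR}}(X_\Delta/\Delta)/F^i$ is a holomorphic section over $\Delta$ with vanishing formal germ at $s$, hence zero by analytic continuation. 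Finally $4)\Rightarrow 1)$: since $\xi$ is rational and lies in $F^i$ over $\Delta$, it lies in $F^i\cap\overline{F^i}$, i.e.\ is of Hodge type $(i,i)$, at every point of $\Delta$; at a Hodge-generic point $s'\in\Delta$ the vector $\xi(s')$ is thus of Hodge type $(i,i)$ in $H^{2i}(X_{s'},\Q)$, and the derived Mumford--Tate group $\mathrm{MT}(s')^{\mathrm{der}}$ --- acting trivially, through a character, on the line $\Q\,\xi(s')$ --- fixes it. As the algebraic monodromy group $\mathrm{Mo}(R^{2i}f_*\Q,s')^0$ is semisimple \cite[Sec.~4.2]{Del71} and, by a theorem of Andr\'e, is normal in $\mathrm{MT}(s')^{\mathrm{der}}$, it too fixes $\xi(s')$; so the $\pi_1(S,s')$-orbit of $\xi(s')$ lies in the finite group $\mathrm{Mo}/\mathrm{Mo}^0$, whence 1).

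The heart of the matter is $1)\Rightarrow 3)$. After the reduction, $\xi\in H^{2i}(X_s,\Q)$ is a $\pi_1(S,s)$-invariant Hodge class. Fix a smooth projective compactification $\overline X\supset X$ with simple normal crossings boundary. Deligne's global invariant cycle theorem \cite{Del74} gives a surjection of pure Hodge structures $H^{2i}(\overline X,\Q)\surj\bigl(H^{2i}(X_s,\Q)\bigr)^{\pi_1(S,s)}$; by strictness of morphisms of Hodge structures for $F$, the type-$(i,i)$ class $\xi$ lifts to a type-$(i,i)$ class $\widetilde\xi\in H^{2i}(\overline X,\Q)$, and since $\im\bigl(H^{2i}(\overline X,\C)\to H^{2i}(X,\C)\bigr)=W_{2i}H^{2i}(X,\C)$ is pure of weight $2i$, the restriction $\widetilde\xi|_X$ is a rational Hodge class on $X$ restricting to $\xi$ --- which proves $1)\Rightarrow 3)$ up to a positive integer multiple. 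To remove the multiple --- the step that, as stressed in Subsection~\ref{ss:IFP}, has no analogue on the local-system side --- one passes to a further finite \'etale cover, using crucially that $S$ is an Artin $K\pi1$: beginning with an integral lift $\xi^\Z\in H^{2i}(X_s,\Z)$ of $\xi$, one kills successively the obstruction in $H^1\bigl(\pi_1(S,s),\mathrm{tors}\,H^{2i}(X_s,\Z)\bigr)$ to making $\xi^\Z$ itself invariant, and then the differentials $d_r(\xi^\Z)$ in the integral Leray spectral sequence of $f$, which vanish rationally by Deligne's degeneration theorem and are therefore torsion, hence --- via Bockstein sequences --- pulled back from the cohomology of $S$ with coefficients in a finite local system, each of which dies on a suitable finite \'etale cover by Proposition~\ref{prop:artKpiki}. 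The main obstacle, as I see it, is to run this scheme so that the resulting integral extension on $X\times_S S'$ lands in $W_{2i}H^{2i}$ --- making it automatically of Hodge type $(i,i)$, hence a genuine Hodge class rather than an arbitrary integral lift of $\xi$ --- which forces one to feed the restriction map of the proper family $\overline f$, together with the interaction between the weight and Leray filtrations, into the same cover-and-kill argument.
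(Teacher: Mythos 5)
Your overall chain is sound and most of it matches the paper's proof, with one structural difference that is perfectly legitimate: for $5)\Rightarrow 1)$ you go through $5)\Rightarrow 4)\Rightarrow 1)$, using the density of Hodge-generic points in $\Delta$ together with Andr\'e's theorem that at such a point the connected algebraic monodromy group sits inside the derived Mumford--Tate group, which fixes the $(i,i)$-class (note that only this containment is used, not normality). The paper instead deduces $5)\Rightarrow 1)$ directly from the Cattani--Deligne--Kaplan structure of the Noether--Lefschetz locus: hypothesis 5) forces the component of $\mathrm{NL}$ through $\xi$ to be \'etale over $S$ there, so the orbit is contained in a fibre of a finite covering. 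Your route trades CDK for Andr\'e; both work. Likewise your rational part of $1)\Rightarrow 3)$ (global invariant cycle theorem plus polarizability/strictness to lift $\xi$ to a rational $(i,i)$-class on $\overline X$) is exactly Deligne's \cite[Thm.~4.1.1]{Del71}, as in the paper.

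The genuine gap is the integrality step of $1)\Rightarrow 3)$, which you yourself flag as ``the main obstacle'' without closing it. Your plan --- construct a separate integral extension of $\xi^{\Z}$ by killing torsion Leray differentials, and then try to force that class into $W_{2i}$ so that it becomes a Hodge class --- attacks the problem from the wrong end: there is no reason the integral class so produced agrees with the rational Hodge class $\zeta=\widetilde\xi|_X$, and reconciling the two is precisely what you cannot do. The correct move is to keep the rational Hodge class $\zeta$ you already have and kill \emph{its} obstruction to integrality. That obstruction is the image of $\zeta$ in $H^{2i}(X,\Q/\Z)$; it is $N$-torsion for some $N$, and since $\zeta|_{X_s}=\xi$ is integral it is induced by an element of $\ker\big[ H^{2i}(X,\tfrac 1N\Z/\Z)\to H^{2i}(X_s,\tfrac 1N\Z/\Z)\big]$. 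By the Leray spectral sequence this kernel is assembled from the groups $H^p(S,R^qf_*(\tfrac 1N\Z/\Z))$ with $p\ge 1$, each of which dies on a suitable finite \'etale cover $S'\to S$ by Proposition~\ref{prop:artKpiki}. Pulling $\zeta$ back to $X\times_S S'$ then makes it integral, while it manifestly stays in $\im F^i$ (pull back along compatible good compactifications) and still restricts to $\xi$; no interaction between the weight and Leray filtrations has to be controlled. With this replacement your proof is complete.
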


\begin{proof}
  The implications 3) $\Rightarrow$ 2) $\Rightarrow$ 1) and 3) $\Rightarrow$  4) $\Rightarrow $  5)
  are clear.

  \smallskip

  1) $\Rightarrow$ 3) holds rationally by Deligne's Fixed Part Theorem
  \cite[Thm.~~4.1.1]{Del71}, that is for any good compactification $\overline X$ of
  $X$ there exists an element in $ F^i H^{2 i}(\overline X,\C)\cap H^{2i}(\overline X,\Q)$
  extending $\xi$; let $\zeta $ be its image in  $ H^{2i}(X,\Q)$.

  We have to check that after replacing $S$ by a finite
  covering  $S' \to S$   with  $\tilde S \to S'\to S,$  the class $\zeta$ becomes integral. The obstruction
  for integrality is the image of $\zeta $ in $H^{2i}(X,\Q/\Z)$. There exists an $N>0$
  such that this image is induced by an element of
\begin{equation}\label{eq:abdelpf}
\ker  [ H^{2i}(X,\frac 1 N \Z / \Z )  \to   H^{2i}(X_s,\frac 1 N \Z / \Z )]
\end{equation}
We claim that there exists a finite \'etale covering $S'\to S$ such that the pullback along
$X\times_S S' \to X$ kills the group~\eqref{eq:abdelpf}. By the Leray spectral sequence it
is sufficient to show that we can kill the groups $H^p(S,R^q f_* (\frac 1 N \Z / \Z) )$ for $p+q=2i$
 and $p=1$, then for $p=2$ etc.\ by a
finite \'etale covering of $S$. This is Proposition~\ref{prop:artKpiki}.

\smallskip

We now address 5)  $\Rightarrow$ 1).
Recall that the  {\it Hodge} or {\it Noether-Lefschetz locus} $\mathrm{NL}$ is the closed
analytic subset of the \'etale space associated to $R^{2i}f_* \Q$ consisting of Hodge
cycles, see~\cite[Intro.]{CDK95}. Restricted to any irreducible component of $\mathrm{NL}$ the morphism $\mathrm{NL}\to S$ is finite and unramified.
The assumption 5) implies that there exists only one irreducible component $W $ of  $ \mathrm{NL}$ containing the given Hodge
cycle $\xi$ and that $W\to S$ is \'etale at $\xi$.  Then $W\to S$
 is a finite, surjective, unramified holomorphic map of reduced, irreducible analytic spaces of the same
 dimension with a complex manifold as codomain, so it is a finite \'etale covering of complex manifolds.
 We conclude that the orbit $\pi_1(S,s) \cdot \xi$ consists of the finitely many elements of the fiber of
 $W\to S$ over $s$.  See also \cite[Intro.]{BEK14} and references therein.
\end{proof}

\section{Extensions of representations of  discrete groups} \label{sec:extreps}

This section contains the main group theoretic results of our note, which are related to
Clifford theory~\cite{Clif}.
Let $G$ and $H$ be finitely generated groups such that  $H\subset G$ is a normal subgroup.
We write $\Gamma$ for the group $G/H$. Let $K$ be a field of characteristic zero and $V$  be a
finite dimensional $K$-vector space.

\subsection{Main group theoretic result}

We say that a property holds {\it essentially} for a representation $\rho \colon G\to \GL(V)
 $ if it holds after restriction to a subgroup $G'$ of finite index in $G$ with $H\subset
 G'$. We say that a representation $\rho_H\colon H \to \GL(V)$ {\it essentially extends} to $G$ if it extends to a
 representation of such a $G'$.

For a representation $\rho \colon G\to \GL(V)$ we
denote by $\mathrm{Mo}(\rho)$ the Zariski closure of the image of $\rho$ as an
algebraic group over $K$. Its identity component $\mathrm{Mo}(\rho)^0$ is called the
{\it algebraic monodromy group} of $\rho$. We call $\rho$ {\it algebraically isomonodromic
relative to $H$} if the  embedding
\[
\mathrm{Mo}(\rho|_H )^0 \to \mathrm{Mo}(\rho)^0
\]
defined by $H\to G$
is an isomorphism.

We denote by $[\rho_H]\in \mathrm{Rep}(H)$ the isomorphism class of a representation
$\rho_H \colon H\to \GL (V)$. There is a natural action of $\Gamma$ on the set of
isomorphism classes of representations $\mathrm{Rep}(H)$ by conjugation by lifts to $G$ of elements in  $\Gamma$.

\begin{thm}\label{thm:maingroup}
Assume that the map of profinite completions $\widehat H\to \widehat G$ is injective. Let
$\rho_H\colon H \to \GL(V)$ be a representation with semisimple algebraic monodromy
group.
The following conditions are equivalent.
\begin{itemize}
\item[a)] the orbit $\Gamma \cdot [\rho_H]$ is finite;
\item[b)]  $\rho_H$ essentially extends to a representation
    $\rho\colon G \to \GL(V)$;
\item[c)] $\rho_H$ essentially extends to an algebraically isomonodromic representation
  $\rho\colon G \to \GL(V)$.

\end{itemize}
Moreover,
\begin{itemize}
\item[i)]  if  a representation  $\rho$ as in c) exists it is essentially unique;
\item[ii)]  if $K$ is a number field and $\rho_H$
is integral then a representation $\rho$ as in c) is integral;
\item[ iii)] if $\rho_H$ is absolutely simple, then an extension $\rho: G\to \GL(V)$ of  $\rho_H$  is algebraically  isomonodromic relative to $H$  if and only if  $\rho$  has finite determinant.
\end{itemize}
\end{thm}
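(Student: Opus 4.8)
The plan is to organize the argument around the cycle a) $\Rightarrow$ c) $\Rightarrow$ b) $\Rightarrow$ a), deducing c) $\Rightarrow$ b) trivially and b) $\Rightarrow$ a) by noting that if $\rho_H$ extends to $G'$ then every $\Gamma$-conjugate of $[\rho_H]$ agrees with $[\rho_H]$ after restriction to $H\cap G'$, so the orbit injects into the finite set of characters on $H/(H\cap G')$ up to the finitely many ways of extending; in fact more simply, conjugation by an element of $G'$ fixes $[\rho_H]$, so the orbit $\Gamma\cdot[\rho_H]$ is a quotient of the finite set $\Gamma/\mathrm{im}(G'\to\Gamma)$. The real content is a) $\Rightarrow$ c). Here I would first use semisimplicity to reduce to the absolutely simple case: decompose $\rho_H$ into isotypic pieces; the finite orbit condition says $\Gamma$ permutes the simple constituents up to finitely many twists, so after passing to a finite-index subgroup of $\Gamma$ (equivalently of $G$ containing $H$) one is reduced to a single absolutely simple $\rho_H$ that is $\Gamma$-invariant up to isomorphism, with the multiplicity spaces carried along by a projective representation of $\Gamma$. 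Classical Clifford theory then produces a projective representation $\bar\rho\colon G\to \mathrm{PGL}(V)$ restricting to the projectivization of $\rho_H$, with obstruction class in $H^2(\Gamma,K^\times)$ to lifting it to $\mathrm{GL}(V)$.

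The heart of the matter — and the step I expect to be the main obstacle — is killing this $H^2$-obstruction by passing to a finite-index subgroup of $G$ containing $H$, using the hypothesis that $\widehat H\to\widehat G$ is injective. The idea indicated in the introduction is to pass to the profinite (indeed $\ell$-adic, or rather $K^\times$-valued but through finite coefficients) world: the obstruction class, being defined by a cocycle valued in the image of $\rho_H$, actually takes values in a finitely generated subgroup of $K^\times$, hence has finite order modulo divisibility, so is controlled by a class in $H^2(\Gamma,\mu_N)$ for some $N$; then one shows this class dies after restriction to a finite-index subgroup. Injectivity of $\widehat H\to\widehat G$ is exactly what lets one realize a suitable finite quotient of $G$ relative to which the restricted obstruction vanishes — concretely, one chooses a finite-index normal $H'\triangleleft H$ trivializing the relevant finite data on $H$, finds $G'\le G$ of finite index with $G'\cap H\subset H'$, and then on $\Gamma'=\mathrm{im}(G'\to\Gamma)$ the obstruction class is pulled back from a group where it already trivializes, or alternatively uses Lemma~\ref{lem:finitekill}/Proposition~\ref{prop:artKpiki}-type vanishing if $\Gamma$ is a successive extension of free groups (but here we only have the profinite injectivity, so one must argue purely group-theoretically). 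Having lifted $\bar\rho$ to $\rho\colon G'\to\mathrm{GL}(V)$, one adjusts by a character of $\Gamma'$ so that $\rho|_H=\rho_H$; this gives b), and one upgrades to c) below.

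For the algebraic isomonodromy in c): once $\rho\colon G'\to\mathrm{GL}(V)$ extends $\rho_H$, the group $\mathrm{Mo}(\rho_H)^0$ is normal in $\mathrm{Mo}(\rho)^0$ (since $H$ is normal in $G'$) with quotient a quotient of the image of $\Gamma'$, hence with $\mathrm{Mo}(\rho)^0/\mathrm{Mo}(\rho_H)^0$ abelian (indeed a quotient of an abelian-by-finite situation after Clifford reduction, where the extra directions are exactly scalars coming from the twisting character). Passing to a further finite-index subgroup and twisting $\rho$ by a suitable root of its determinant-type character — possible precisely because the deficiency is abelian and the relevant characters have values in a finitely generated, hence essentially torsion-after-extraction, subgroup of $K^\times$ — one arranges $\mathrm{Mo}(\rho)^0=\mathrm{Mo}(\rho_H)^0$, proving c). Part~i) follows since two algebraically isomonodromic extensions differ by a representation of $\Gamma'$ trivial on the algebraic monodromy, i.e.\ by a finite-image character, which disappears essentially. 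Part~ii) is immediate: the extension is obtained from $\rho_H$ by twisting with a finite-order character and the Clifford/induction bookkeeping preserves integrality over $\mathcal O_K$ up to enlarging the field of the character by roots of unity, still within a number ring; one then descends back after a finite cover. Part~iii): if $\rho_H$ is absolutely simple and $\rho$ extends it with finite determinant, then $\mathrm{Mo}(\rho)^0$ is semisimple (finite determinant forces the center to be finite, and $\rho_H$ absolutely simple makes $\mathrm{Mo}(\rho_H)^0$ act irreducibly so $\mathrm{Mo}(\rho)^0$ acts irreducibly too); an irreducible semisimple group containing the normal irreducible semisimple subgroup $\mathrm{Mo}(\rho_H)^0$ with abelian-after-reduction quotient must equal it, giving algebraic isomonodromy; conversely if $\rho$ is algebraically isomonodromic relative to $H$ then $\mathrm{Mo}(\rho)^0=\mathrm{Mo}(\rho_H)^0$ is semisimple, so the determinant character is trivial on a finite-index subgroup, i.e.\ $\rho$ has finite determinant.
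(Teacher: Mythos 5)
Your overall architecture (Clifford theory, reduce to the absolutely simple case, obstruction in $H^2(\Gamma,K^\times)$, kill it using profiniteness) matches the paper's, but the central step is not actually carried out, and the mechanism you sketch for it would fail. You assert that the obstruction, once reduced to a class in $H^2(\Gamma,\mu_N)$ for the \emph{discrete} group $\Gamma$, ``dies after restriction to a finite-index subgroup'', with the vanishing coming because it is ``pulled back from a group where it already trivializes'' after shrinking $H$. This is not true for discrete groups: a class in $H^2(\Gamma,\mu_N)$ classifies a central extension of $\Gamma$ by $\mu_N$, and such extensions need not split on any finite-index subgroup (Deligne's non-residually-finite central extensions of $\mathrm{Sp}_{2g}(\Z)$ are the standard counterexample); moreover shrinking $H$ does not touch the $\Gamma$-cohomology at all. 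The point of the hypothesis $\widehat H\hookrightarrow\widehat G$ is precisely that one can transport the whole lifting problem to \emph{continuous} representations of the profinite completions over an $\ell$-adic field $E$ (spread out $\mathrm{im}(\rho_H)$ into $\GL_r(\sO_E)$, so that $\rho_H$ factors through $\widehat H$ and remains simple there). In continuous cohomology of a profinite group with finite coefficients every class is inflated from a finite quotient, hence dies on an open subgroup; this is what kills the obstruction (Proposition~\ref{prop:appext}), and one then returns to $K$ by a spreading-out argument (the extension takes values in $\GL_r(A)$ for a finitely generated algebra $A$, and one specializes at a $K$-point). You gesture at the profinite world but never perform this transfer, and you omit both the descent from $E$ (resp.\ $\overline K$) back to $K$ and the Galois-descent reduction to algebraically closed $K$ that the general statement requires.

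Part ii) as you state it is also wrong: the extension $\rho$ is not obtained from $\rho_H$ ``by twisting with a finite-order character'' (it is a representation of a strictly larger group), so there is no bookkeeping that hands you a $G$-stable lattice for free. The paper's argument is genuinely different: for an $H$-stable lattice $\mathcal V$ each translate $\rho(g)\mathcal V$ is again $H$-stable, the Jordan--Zassenhaus theorem gives finitely many such lattices up to homothety, so after passing to a finite-index subgroup $\rho(g)\mathcal V=\lambda_g\mathcal V$, and finiteness of $\det\rho$ forces $\lambda_g\in\sO_K^\times$. Your treatment of a)\,$\Rightarrow$\,c) and of part iii) is in the right spirit (the paper makes it precise via the notion of admissible extensions, Proposition~\ref{prop:admalgisom}, and induction from a finite-index subgroup in Proposition~\ref{prop:finindexadmext}), but note that your ``twist by a root of the determinant character'' step requires extracting roots in $K^\times$, which again only works after the passage to $E$ or $\overline K$ followed by descent.
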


The proof of Theorem~\ref{thm:maingroup} is given in Subsection~\ref{subsec:algisoreps}.

\subsection{Admissible representations}

{In this subsection we assume that $K$ is algebraically closed of characteristic zero.} A representation
$\rho_H\colon H\to \GL(V)$ is called {\it admissible} if it is semisimple and all simple
constituents of $\rho_H$ have finite determinant. A representation
$\rho\colon G\to \GL(V)$ is called {\it admissible relative to $H$} (or an {\it admissible
  extension of $\rho|_H$}) if $\rho$ is admissible and all its simple
constituents
stay simple when restricted to $H$.

\smallskip

The two  following lemmata  ought to be well known, but we could not find a precise reference.

\begin{lem}\label{lem:semi-simplealg}
The algebraic monodromy group $\mathrm{Mo}(\rho)^0$ of $\rho$ is semisimple if and only if $\rho|_{G'}$ is
admissible for all subgroups of finite index $G'\subset G$.
\end{lem}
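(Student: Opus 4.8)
The plan is to prove both implications by reducing the statement about $\mathrm{Mo}(\rho)^0$ to a statement about the restrictions of $\rho$ to finite-index subgroups, using that semisimplicity of an algebraic group in characteristic zero is detected on the level of its representation category, together with the dictionary between subgroups of finite index in $G$ and finite covers of $\mathrm{Mo}(\rho)$ (passing through connected components).

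First, for the direction ``$\mathrm{Mo}(\rho)^0$ semisimple $\Rightarrow$ $\rho|_{G'}$ admissible for all finite-index $G'$'': let $G'\subset G$ have finite index. Then $\mathrm{Mo}(\rho|_{G'})$ is a subgroup of $\mathrm{Mo}(\rho)$ of finite index, hence contains $\mathrm{Mo}(\rho)^0$; since $\mathrm{Mo}(\rho)^0$ is connected of finite index it equals $\mathrm{Mo}(\rho|_{G'})^0$. So $\mathrm{Mo}(\rho|_{G'})^0$ is semisimple. Now $V$, viewed as a representation of the (possibly disconnected) reductive group $\mathrm{Mo}(\rho|_{G'})$ whose identity component is semisimple, is semisimple: a faithful representation of a group whose identity component is reductive is semisimple (Chevalley), and here $\mathrm{Mo}(\rho|_{G'})$ is reductive because its identity component is semisimple and it has finitely many components. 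Hence $\rho|_{G'}$ is semisimple. For the determinant condition on simple constituents: if $W\subset V$ is a $G'$-simple constituent, its Zariski-closed image is a quotient of $\mathrm{Mo}(\rho|_{G'})$ with semisimple identity component, and a semisimple group has no nontrivial characters, so $\det$ of the identity component is trivial, i.e.\ $\det|_W$ has finite image. Thus $\rho|_{G'}$ is admissible.

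Conversely, suppose $\rho|_{G'}$ is admissible for every finite-index $G'\subset G$; we want $\mathrm{Mo}(\rho)^0$ semisimple. Choose $G'$ to be the preimage under $G\to \mathrm{Mo}(\rho)/\mathrm{Mo}(\rho)^0$ of the trivial subgroup — more precisely, since $\mathrm{Mo}(\rho)/\mathrm{Mo}(\rho)^0$ is finite, there is a finite-index normal $G'\subset G$ with $\mathrm{Mo}(\rho|_{G'})=\mathrm{Mo}(\rho)^0$ connected. By hypothesis $\rho|_{G'}$ is admissible, so $V$ is a semisimple representation of the connected group $M:=\mathrm{Mo}(\rho)^0$, whence $M$ is reductive (a connected group with a faithful semisimple representation is reductive, by Chevalley/the theory of reductive groups in characteristic zero). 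It remains to kill the central torus: write $M=Z^\circ\cdot M'$ with $Z^\circ$ the connected center (a torus) and $M'$ the derived group (semisimple). Each simple constituent $W$ of $V|_{G'}$ is a simple $M$-representation, on which $Z^\circ$ acts by a character $\chi_W$; admissibility says $\det|_W$ has finite image, and $\det|_W = \chi_W^{\dim W}$ on $Z^\circ$, so $\chi_W$ has finite image on $Z^\circ$, hence $\chi_W$ is trivial on the torus $Z^\circ$ (a torus has no nontrivial finite-image characters that aren't already trivial — more carefully, $\chi_W\colon Z^\circ\to\mathbb{G}_m$ is a character of a torus with finite image, hence trivial since $X^*(Z^\circ)$ is torsion-free). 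Therefore $Z^\circ$ acts trivially on every simple constituent, hence trivially on $V$; since $\rho$ is faithful modulo finite index and $Z^\circ$ is connected, $Z^\circ$ acts trivially on $V$, so $Z^\circ=\{1\}$ and $M=M'$ is semisimple.

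The main obstacle is the bookkeeping around disconnectedness: one must be careful that ``faithful semisimple representation implies reductive'' and the character-vanishing argument are applied to the \emph{connected} group $\mathrm{Mo}(\rho)^0$, which is why passing to the specific finite-index subgroup $G'$ with $\mathrm{Mo}(\rho|_{G'})$ connected is essential; and one must check that $\det$ having finite image on $W$ really forces the central character to vanish, which uses only that $X^*(\text{torus})$ is a free abelian group. All the representation-theoretic inputs (Chevalley's theorem on semisimplicity of representations of reductive groups, triviality of characters of semisimple groups) are standard in characteristic zero, so no genuinely hard step remains once the reduction to the connected case is set up correctly.
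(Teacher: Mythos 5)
Your proof is correct and follows essentially the same route as the paper's: identify $\mathrm{Mo}(\rho|_{G'})^0$ with $\mathrm{Mo}(\rho)^0$, use complete reducibility for reductive groups plus triviality of characters of semisimple groups for one direction, and for the converse pass to a finite-index subgroup making the monodromy group connected, deduce reductivity from semisimplicity of the representation, and kill the central torus via Schur's lemma and the finite-determinant condition. The only cosmetic difference is that you quote the standard fact ``a group with a faithful semisimple representation is reductive'' where the paper spells out the underlying Clifford--Engel argument for the unipotent radical.
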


\begin{proof}
  Assume the algebraic monodromy group $\mathrm{Mo}(\rho)^0$ is semisimple.  Then for all finite index subgroups $G'\subset G$, $\rho(G')\subset \rho(G)$ has finite index, thus  $\mathrm{Mo}(\rho|_{G'}) \subset  \mathrm{Mo}(\rho)$
  has finite index as well.  This implies the equality $\mathrm{Mo}(\rho|_{G'})^0 =  \mathrm{Mo}(\rho)^0$.

   In particular,  $\mathrm{Mo}(\rho|_{G'})^0$
   is reductive, thus by Weyl's theorem, $\rho|_{G'}$
 as a   representation of the
  algebraic group ${\rm Mo}(\rho|_{G'})^0$ is semisimple, thus $\rho|_{G'}$ itself is semisimple. As  ${\rm Mo}(\rho)^0= [ {\rm Mo}(\rho)^0,{\rm Mo}(\rho)^0 ]$,  all rank one algebraic
representations of ${\rm Mo}(\rho|_{G'})$ are finite.  Thus, $\rho|_{G'}$ is admissible.

  \smallskip

  Assume conversely that $\rho|_{G'}$ is admissible for all subgroups of finite index
$G'\subset G$. Upon replacing $G$ by the preimage of  $ {\rm Mo}(\rho)^0(K)$, which is a finite index subgroup  in $G$ (recall $K$ is algebraically closed),
we may assume that
$\mathrm{Mo}(\rho)$ is connected.  As the unipotent radical $U$ of  $\mathrm{Mo}(\rho)$ is normal, by Clifford's theorem
$\rho|_U$ is still semisimple. So by  Engel's theorem, we conclude that $U$ is trivial.
The radical $R$  of  $\mathrm{Mo}(\rho)$ is then a
central torus, so by Schur's lemma it acts by a character $\lambda\colon R\to \mathbb G_m$ on each simple
$G$-subrepresentation  $V_i \subset V$. As the action on $\det(V_i)$ is finite,  a power of
$\lambda$ is trivial, so $\lambda$ is trivial.  As  the  action of $R$ on $V$ is faithful,
  $R$ is trivial and ${\rm Mo}(\rho)^0$ is semisimple.
\end{proof}

 We draw the following consequence of Lemma~\ref{lem:semi-simplealg} which ought to be well known, but we could not find a reference.
\begin{lem}
Finite direct sums of representations with semisimple algebraic monodromy groups have semisimple
algebraic monodromy groups. Direct summands of representations with semisimple algebraic
monodromy group have a semisimple algebraic monodromy group.
\end{lem}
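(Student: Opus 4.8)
The plan is to deduce both assertions from the characterization in Lemma~\ref{lem:semi-simplealg}: a representation $\rho\colon G\to\GL(V)$ has semi-simple algebraic monodromy group precisely when $\rho|_{G'}$ is admissible for every finite-index subgroup $G'\subset G$. So in each case it suffices to check the admissibility condition after restriction to an arbitrary finite-index subgroup, and since restriction commutes with the operations of forming direct sums and passing to direct summands, it is harmless to work with $G$ itself from the start (and, after a further finite-index restriction, we may pass to $\overline K$ and assume $K$ algebraically closed, so that Lemma~\ref{lem:semi-simplealg} applies on the nose).

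For the first statement, suppose $\rho_1$ and $\rho_2$ have semi-simple algebraic monodromy group. For any finite-index $G'\subset G$, both $\rho_i|_{G'}$ are admissible, i.e.\ semi-simple with all simple constituents of finite determinant. Then $(\rho_1\oplus\rho_2)|_{G'}=\rho_1|_{G'}\oplus\rho_2|_{G'}$ is a direct sum of semi-simple representations, hence semi-simple, and its simple constituents are among the simple constituents of the two summands, so they all have finite determinant. Thus $\rho_1\oplus\rho_2$ is admissible after restriction to every finite-index subgroup, and Lemma~\ref{lem:semi-simplealg} gives that $\mathrm{Mo}(\rho_1\oplus\rho_2)^0$ is semi-simple. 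An evident induction handles arbitrary finite direct sums.

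For the second statement, suppose $\rho\oplus\rho'$ has semi-simple algebraic monodromy group; we must show $\rho$ does. Fix a finite-index $G'\subset G$. By Lemma~\ref{lem:semi-simplealg} applied to $\rho\oplus\rho'$, the restriction $(\rho\oplus\rho')|_{G'}$ is admissible, in particular semi-simple; since a subrepresentation of a semi-simple representation is semi-simple, $\rho|_{G'}$ is semi-simple, and its simple constituents are simple constituents of $(\rho\oplus\rho')|_{G'}$, hence have finite determinant. So $\rho|_{G'}$ is admissible for every finite-index $G'$, and Lemma~\ref{lem:semi-simplealg} yields that $\mathrm{Mo}(\rho)^0$ is semi-simple. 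I expect the only point requiring a word of care is the reduction to algebraically closed $K$: one replaces $V$ by $V\otimes_K\overline K$ and observes that semi-simplicity of the identity component of the Zariski closure is insensitive to this base change (the algebraic monodromy group only changes by extension of scalars), so that Lemma~\ref{lem:semi-simplealg}, stated over an algebraically closed field, is legitimately applicable; everything else is formal manipulation with the definitions.
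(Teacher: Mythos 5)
Your proposal is correct and follows exactly the paper's route: both assertions are reduced via Lemma~\ref{lem:semi-simplealg} to the statement that admissibility in restriction to all finite-index subgroups is preserved under direct sums and direct summands, which is immediate. The extra care you take with the reduction to algebraically closed $K$ (base change of the algebraic monodromy group) is a point the paper leaves implicit, and your treatment of it is fine.
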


\begin{proof}
By Lemma~\ref{lem:semi-simplealg} it is sufficient to prove this for ``semisimple
algebraic monodromy group'' replaced by ``admissible  in restriction to all finite index subgroups'', for which it is trivial.
\end{proof}

\begin{lem}\label{lem:admend}
Let $ \rho \colon G\to \GL(V)$ be an   admissible representation relative to
$H$, then  the restriction homomorphism
\[
\mathrm{End}_\rho (V) \xrightarrow\sim \mathrm{End}_{\rho|_H} (V).
\]
is an isomorphism.
\end{lem}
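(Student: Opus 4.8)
The plan is to decompose $V$ into its $G$-isotypic pieces and then apply Schur's lemma twice, over the algebraically closed field $K$. Note first that there is nothing to show for injectivity: $\mathrm{End}_\rho(V)$ is the commutant of $\rho(G)$ inside $\mathrm{End}_K(V)$ and $\mathrm{End}_{\rho|_H}(V)$ the commutant of the subgroup $\rho(H)$, so the former is contained in the latter. The content of the lemma is surjectivity, i.e.\ that every $H$-equivariant endomorphism of $V$ is already $G$-equivariant.

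Since $\rho$ is admissible it is in particular semisimple, so we may write $V\cong\bigoplus_{i=1}^{r}V_i\otimes_K M_i$ as a $G$-representation with the $V_i$ pairwise non-isomorphic simple and $G$ acting trivially on the multiplicity spaces $M_i$. The associated projectors onto the isotypic components lie in $\mathrm{End}_\rho(V)$. I claim --- this is the heart of the matter, see the next paragraph --- that the restrictions $V_i|_H$ are still pairwise non-isomorphic simple $H$-representations. Granting this, take $\phi\in\mathrm{End}_{\rho|_H}(V)$. For $i\neq j$ the component of $\phi$ in $\mathrm{Hom}_H(V_i\otimes_K M_i,\,V_j\otimes_K M_j)=\mathrm{Hom}_H(V_i,V_j)\otimes_K\mathrm{Hom}_K(M_i,M_j)$ vanishes, so $\phi$ respects the decomposition; and since $\mathrm{Hom}_H(V_i,V_i)=K\cdot\mathrm{id}_{V_i}$ by Schur, $\phi$ acts on $V_i\otimes_K M_i$ as $\mathrm{id}_{V_i}\otimes\psi_i$ for some $\psi_i\in\mathrm{End}_K(M_i)$. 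Such an operator is $G$-equivariant because $G$ acts trivially on $M_i$; hence $\phi\in\mathrm{End}_\rho(V)$, and the restriction map is surjective.

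It remains to prove the claim. Simplicity of each $V_i|_H$ is precisely the definition of $\rho$ being admissible relative to $H$. For the pairwise non-isomorphism I would invoke Clifford theory: the isomorphism class of the simple $H$-representation $V_i|_H$ is fixed by $\Gamma=G/H$, being the restriction of the $G$-representation $V_i$, and the isomorphism classes of simple $G$-representations whose restriction to $H$ is $V_i|_H$ form a torsor under $\mathrm{Hom}(\Gamma,K^\times)$; thus $V_i|_H\cong V_j|_H$ would yield $V_j\cong V_i\otimes\chi$ for some character $\chi\colon\Gamma\to K^\times$. Comparing determinants gives $\det V_j=(\det V_i)\,\chi^{\dim V_i}$, and since $\rho$ is admissible the characters $\det V_i$ and $\det V_j$ have finite order; as $\Gamma$ is finitely generated this forces $\chi$ to be trivial, so $V_i\cong V_j$, contradicting $i\neq j$. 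The main obstacle is exactly this last step: everything else is a formal consequence of semisimplicity and Schur's lemma, but the preservation of ``pairwise non-isomorphic'' under restriction to $H$ is where the finite-determinant part of admissibility is genuinely used.
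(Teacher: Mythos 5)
Your argument follows the same route as the paper's (isotypic decomposition plus Schur's lemma), and you have correctly isolated the real issue, which the paper's own proof passes over in silence: one needs the restrictions $V_i|_H$ to remain pairwise \emph{non-isomorphic}, not merely simple, for the two endomorphism algebras to be given by the same formula. Your Clifford-theoretic reduction is also correct: if $V_i|_H\cong V_j|_H$ then $V_j\cong V_i\otimes\chi$ for a character $\chi\colon\Gamma\to K^\times$, and comparing determinants shows that $\chi^{\dim V_i}$, hence $\chi$, has finite order. The gap is the very last step: a character of finite order on a finitely generated group need \emph{not} be trivial ($\Gamma=\Z$ or $\Z/2$ already carries a nontrivial character of order $2$). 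Moreover this is not merely a defect of your write-up; it is a genuine counterexample to the statement as literally formulated. Take $G=H\times\Z$, let $\sigma$ be an absolutely simple representation of $H$ with finite determinant, let $\chi$ be the order-two character of $\Z$, and set $\rho=(\sigma\boxtimes 1)\oplus(\sigma\boxtimes\chi)$. This is admissible relative to $H$ (both constituents are simple with finite determinant and restrict to the simple $\sigma$), yet $\mathrm{End}_\rho(V)=K\times K$ while $\mathrm{End}_{\rho|_H}(V)=M_2(K)$.

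What your determinant computation genuinely proves is that each character $\chi_{ij}$ relating two constituents with isomorphic restrictions has finite order. Since there are finitely many such pairs, all these characters die on a common finite-index subgroup $G'\subset G$ containing $H$; after restricting to $G'$ the $H$-isotypic and $G'$-isotypic decompositions of $V$ coincide and the two endomorphism algebras agree. So the correct conclusion is that the restriction map is \emph{essentially} an isomorphism, in the paper's sense of passing to a finite-index subgroup containing $H$ --- which is all that is needed for the application in Proposition~\ref{prop:uniqextadmis}, whose conclusion is itself only an essential statement. Note that this also shows the remark following the lemma (that the finite-determinant hypothesis is not needed) cannot be right: for $\chi$ of infinite order the discrepancy between the two endomorphism algebras survives every finite-index subgroup.
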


\begin{proof}
Let $ V = \bigoplus_i V_i \otimes W_i$ be the canonical decomposition with $V_i$ non-isomorphic
simple representations of $G$ and $W_i$ $K$-vector spaces as trivial representations. By Schur's lemma
\[
\mathrm{End}_\rho (V) = \prod_i  \mathrm{End}(W_i).
\]
As $V_i$ as a representation of $H$ is simple as well,
the same formula holds for $\mathrm{End}_{\rho|_H} (V)$.
\end{proof}

\begin{rmk}
In Lemma~\ref{lem:admend} we do not need the condition on the determinants of the constituents being torsion. We won't need this fact.
\end{rmk}

\begin{prop}\label{prop:uniqextadmis}
Let $\rho,\tilde \rho\colon G\to \GL(V)$ be two admissible representations relative to
$H$ with $\rho|_H = \tilde\rho|_H$. Then $\rho$ is essentially isomorphic to $\tilde \rho$.
\end{prop}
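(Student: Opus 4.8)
\textbf{Proof proposal for Proposition~\ref{prop:uniqextadmis}.}

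The plan is to reduce to the case of a single simple constituent and then use that extending a simple $H$-representation to $G$ is unique up to a twist by a character of $\Gamma$, which one then kills by passing to a finite-index subgroup. First I would apply Lemma~\ref{lem:admend} to the admissible extensions $\rho$ and $\tilde\rho$: both restriction maps $\mathrm{End}_\rho(V)\to\mathrm{End}_{\rho|_H}(V)$ and $\mathrm{End}_{\tilde\rho}(V)\to\mathrm{End}_{\tilde\rho|_H}(V)$ are isomorphisms, and since $\rho|_H=\tilde\rho|_H$ we get a common endomorphism algebra $A=\mathrm{End}_{\rho|_H}(V)$ together with two embeddings of it as $G$-endomorphisms. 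Decompose $V=\bigoplus_i V_i\otimes W_i$ into $\rho|_H$-isotypic pieces; each $V_i$ is a simple $H$-representation that remains simple for both $\rho$ and $\tilde\rho$ (admissibility relative to $H$). Because the isotypic decomposition is canonical and recovered from $A$, it is respected by both $\rho$ and $\tilde\rho$; thus it suffices to treat each block $V_i\otimes W_i$ separately, i.e.\ to assume $V=V_0\otimes W$ where $V_0|_H$ is simple and $W$ is a multiplicity space.

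Next I would analyze a single isotypic block. On $V_0\otimes W$ the group $G$ acts $H$-equivariantly, so by Schur $\rho$ and $\tilde\rho$ each factor: there are projective (or genuine, once we pass to a finite cover) representations of $G$ on the multiplicity space $W$, and the action on $V_0$ is the ``canonical'' Clifford extension, twisted by a $1$-cocycle valued in scalars. Concretely, for $g\in G$, conjugation by $\rho(g)$ and by $\tilde\rho(g)$ both realize the same automorphism of $H$-representations $V_0\xrightarrow{\sim}{}^gV_0$, so $\tilde\rho(g)\rho(g)^{-1}$ centralizes $\rho|_H$, hence lies in $A=\mathrm{End}_{\rho|_H}(V)=\mathrm{End}(W)$ (on this block). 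The map $g\mapsto \tilde\rho(g)\rho(g)^{-1}$ is trivial on $H$ (both agree there), so it descends to a map $\Gamma\to \GL(W)/(\text{something})$; one checks using the cocycle identity that, modulo the respective Clifford structures, it gives two genuine representations of $G$ on $W$ lifting the same projective representation of $\Gamma$, hence differing by a character $\chi\colon\Gamma\to K^\times$. (Here one uses that $K$ has characteristic zero and, after enlarging if needed—though the statement is over a general $K$—that the relevant $\mathrm{Hom}$-spaces are one-dimensional because $V_0|_H$ is absolutely... — more carefully: $\mathrm{End}(W)$ is a product of matrix algebras over division algebras, and the discrepancy is a central unit in each factor, giving a homomorphism to the center.)

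The final step is to kill the twisting character. Because $\rho$ and $\tilde\rho$ are \emph{admissible}, each simple constituent has finite determinant; comparing determinants of corresponding constituents of $\rho$ and $\tilde\rho$ on a block forces the twisting character $\chi$ to have finite order (a suitable power of $\chi^{\dim}$ equals a ratio of finite-order determinant characters, hence is finite, hence $\chi$ is finite). Therefore $\chi$ is trivial on a finite-index subgroup $\Gamma'\subset\Gamma$; letting $G'$ be its preimage in $G$ (which contains $H$), we get $\rho|_{G'}\cong\tilde\rho|_{G'}$, i.e.\ $\rho$ is essentially isomorphic to $\tilde\rho$. I expect the main obstacle to be the bookkeeping in the middle step: correctly identifying the discrepancy $\tilde\rho\rho^{-1}$ as a character of $\Gamma$ valued in the centralizer, and handling the case where $\mathrm{End}(W)$ is noncommutative (several division-algebra factors), so that "differ by a character" must be made precise blockwise and one must ensure the character lands in $K^\times$ rather than merely in a unit group of a division algebra; the finite-determinant hypothesis is exactly what is needed to close this and is where admissibility is essential.
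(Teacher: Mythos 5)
Your overall skeleton (reduce via Lemma~\ref{lem:admend}, form the discrepancy $\tau(g)=\tilde\rho(g)\rho(g)^{-1}$, identify it as a finite-order scalar character using the finite determinants, and pass to its kernel) is the paper's argument, and your first and last steps are correct. But the middle step, which you yourself flag as the main obstacle, has a genuine gap: after reducing only to an isotypic block $V_0\otimes W$, the element $\tau(g)$ lies in $1\otimes\GL(W)$ and there is no a priori reason for it to be scalar. Moreover $\tau$ is only a crossed homomorphism, $\tau(g_1g_2)=\tilde\rho(g_1)\tau(g_2)\tilde\rho(g_1)^{-1}\tau(g_1)$, so the claim that the two actions on the multiplicity space ``differ by a character $\chi\colon\Gamma\to K^\times$'' is exactly what needs to be proved and cannot be extracted from the cocycle identity alone; as written the argument is circular at this point. (The division-algebra worry, by contrast, is moot: the subsection assumes $K$ algebraically closed.)

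The fix is to push the decomposition one step further, which is what the paper does. A subspace $V'\subset V$ is a simple $\rho$-subrepresentation if and only if it is the image of $V$ under a minimal right ideal of $\mathrm{End}_\rho(V)$; by Lemma~\ref{lem:admend} both $\mathrm{End}_\rho(V)$ and $\mathrm{End}_{\tilde\rho}(V)$ coincide with $\mathrm{End}_{\rho|_H}(V)$ inside $\mathrm{End}(V)$, so the simple $\rho$-subrepresentations are exactly the simple $\tilde\rho$-subrepresentations. One may therefore assume $V$ itself is simple for both $\rho$ and $\tilde\rho$; admissibility relative to $H$ then makes $\rho|_H=\tilde\rho|_H$ simple, and Schur's lemma applied to $\tau(g)\rho(h)\tau(g)^{-1}=\rho(h)$ gives $\tau(g)\in K^\times$ outright. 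From there your final step goes through verbatim: $\tau(g)^{\dim V}$ is a quotient of two finite-order determinants, so $\tau$ takes values in $\mu_N(K)$ for some $N$, is then a genuine homomorphism (scalars are central), and $\rho=\tilde\rho$ on its finite-index kernel; summing over the finitely many simple summands finishes the proof.
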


\begin{proof}
A vector subspace $V'\subset V$ is a simple $\rho$-subrepresentation if and only if it is the image of $V$  by a
minimal right ideal of $\mathrm{End}_\rho$ and the same for $\rho$ replaced with $\tilde
\rho$. So by Lemma~\ref{lem:admend} the simple $\rho$-subrepresentations are exactly the
simple $\tilde \rho$-subrepresentations. We can therefore assume that $\rho$ and $\tilde
\rho$ are simple.

Set $\tau(g)=\tilde \rho(g) \rho(g)^{-1}$ for $g\in G$. As
\ga{}{ \tau(g) \rho(h) \tau(g)^{-1} =
 \tilde{ \rho}(g) \rho(g^{-1}hg)\tilde{ \rho}(g^{-1})
 = \tilde{ \rho}(g)
 \tilde{ \rho}(g^{-1}hg) \tilde{\rho}(g^{-1})=
\rho(h) \notag }
 for all $h\in H$ and $g\in G$,  and as   $\rho|_H$ is simple, Schur's lemma implies that $\tau(g)\in K^\times $
for all $g\in G$. In fact $\tau(g)\in \mu_N(K)$ for some $N>0$ as the determinants of
$\rho$ and $\tilde \rho$ are finite.   This  implies that $\tau \colon G\to \mu_N(K)\subset \GL(V)$ is a homomorphism. Thus, $\rho=\tilde \rho$ in restriction to
 the kernel $G' \subset G $ of $\tau $, which is a finite index subgroup. \end{proof}

The next proposition is the reason why we have to pass to profinite completions.
\begin{prop}\label{prop:existextadmis}
  Assume that the map of profinite completions $\widehat H\to \widehat G$ is injective.
 For an admissible representation  $\rho_H\colon H\to
 \mathrm{GL}(V)$ the following are equivalent:
 \begin{itemize}
 \item[1)] the orbit
$\Gamma \cdot [\rho_H]$ is finite;
\item[  2)] $\rho_H$ essentially extends to a representation of $G$;
\item[ 3)] $\rho_H$  essentially extends to an admissible representation $\rho$ of $G$ (relative to $H$).

\end{itemize}
\end{prop}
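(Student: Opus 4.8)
The plan is to dispatch the formal implications $3)\Rightarrow 2)\Rightarrow 1)$ and to spend the work on $1)\Rightarrow 3)$, via a Clifford-theoretic construction whose only non-formal input is a vanishing statement for continuous cohomology of profinite groups with finite coefficients. For $3)\Rightarrow 2)$ there is nothing to do. For $2)\Rightarrow 1)$: if $\rho_H$ extends to $\rho\colon G'\to\GL(V)$ on a finite index subgroup $H\subseteq G'\subseteq G$, then for $g\in G'$ the matrix $\rho(g)$ intertwines $\rho_H$ with its conjugate by $g$, so the finite index subgroup $\Gamma':=G'/H$ of $\Gamma$ fixes $[\rho_H]$ and the orbit $\Gamma\cdot[\rho_H]$ is finite. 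For $1)\Rightarrow 3)$ I would first replace $G$ by the preimage of the finite index stabiliser of $[\rho_H]$, so that $g\cdot[\rho_H]=[\rho_H]$ for all $g\in G$; writing the admissible $\rho_H$ as $\bigoplus_i\rho_i^{\oplus m_i}$ with the $\rho_i$ pairwise non-isomorphic simple — hence absolutely simple, $K$ being algebraically closed — of finite determinant, $\Gamma$ permutes the finite set $\{[\rho_i]\}$ and, after a further finite index reduction, fixes each $[\rho_i]$. Since an admissible relative extension of $\rho_H$ can be assembled, over a common finite index subgroup, from absolutely simple relative extensions of the individual $\rho_i$, one is reduced to: $\rho_H$ absolutely simple of finite determinant, $[\rho_H]$ fixed by $\Gamma$.

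In that case, for each $g\in G$ pick $T(g)\in\GL(V)$ with $T(g)\rho_H(h)T(g)^{-1}=\rho_H(ghg^{-1})$, normalised by $T(h)=\rho_H(h)$ and $T(h_1gh_2)=\rho_H(h_1)T(g)\rho_H(h_2)$; by Schur's lemma $c(g_1,g_2):=T(g_1)T(g_2)T(g_1g_2)^{-1}$ lies in $K^\times$ and is the inflation of a $2$-cocycle on $\Gamma$. Comparing determinants and using that $\det\rho_H$ is finite shows $[c]$ is torsion, and as $K$ is algebraically closed one may, after modifying $T$ by a function $\Gamma\to K^\times$, assume $c$ takes values in $\mu_m$ for some $m$. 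Let $1\to\mu_m\to\widetilde G\to G\to 1$ be obtained by pulling back along $G\to\Gamma$ the central extension of $\Gamma$ classified by $c$. Using $T$ one builds a homomorphism $\Theta\colon\widetilde G\to\GL(V)$ restricting to the inclusion on $\mu_m$, and — since $G\to\Gamma$ is trivial on $H$ — a section $s_H\colon H\to\widetilde G$ with $\Theta\circ s_H=\rho_H$. By Malcev's theorem $\Theta(\widetilde G)$ is a finitely generated linear, hence residually finite, group, so $\mu_m$ injects into the profinite completion $\widehat{\widetilde G}$ and $1\to\mu_m\to\widehat{\widetilde G}\to\widehat G\to 1$ is exact. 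Here the hypothesis enters: since $\widehat H\to\widehat G$ is injective, $\widehat{s_H}$ is an honest continuous section over the closed subgroup $\widehat H\subseteq\widehat G$, so the class $\widehat\beta\in H^2_{\mathrm{cont}}(\widehat G,\mu_m)$ of the profinite extension restricts to $0$ on $\widehat H$.

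The crux is to find an open subgroup $\widehat{\Gamma'}\subseteq\widehat\Gamma=\widehat G/\widehat H$ whose preimage $\widehat{G'}\subseteq\widehat G$ kills $\widehat\beta$ — precisely the ``$H^2$-obstruction killed by an open subgroup'' phenomenon, and the one step that fails for discrete groups (e.g.\ on $H^2$ of surface groups with finite coefficients, restriction to finite index subgroups need not vanish). I would use: (i) continuous cohomology with finite coefficients is the colimit of the cohomologies of the finite quotients, and $\mathrm{res}\circ\mathrm{inf}$ vanishes in positive degrees, so every such class dies after restriction to a suitable open subgroup; and (ii) the Hochschild–Serre spectral sequence for $1\to\widehat H\to\widehat G\to\widehat\Gamma\to 1$. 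Since $\widehat\beta|_{\widehat H}=0$, its image in $E^{1,1}_\infty\subseteq H^1_{\mathrm{cont}}(\widehat\Gamma,\mathrm{Hom}_{\mathrm{cont}}(\widehat H,\mu_m))$ is defined; shrinking $\widehat\Gamma$ once (applying (i) to this $H^1$-class) makes $\widehat\beta$ the inflation of a class in $H^2_{\mathrm{cont}}$ of the shrunk group, and shrinking once more (applying (i) again) makes that class vanish. Pulling back along $G\to\widehat G$, the class of $\widetilde G|_{G'}\to G'$ in $H^2(G',\mu_m)$ — with $G'$ the finite index preimage in $G$ of $\widehat{\Gamma'}$, which contains $H$ — is the image of $0$, so $\widetilde G|_{G'}\to G'$ splits, say by $s$.

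Finally $\rho:=\Theta\circ s\colon G'\to\GL(V)$ extends $\rho_H$ up to a finite order character $\chi\colon H\to\mu_m$ (the discrepancy between $s|_H$ and $s_H$), which is $\Gamma'$-invariant; the obstruction to extending $\chi$ over a finite index subgroup lies in $H^2(\Gamma',\mu_m)$ and, $\chi$ factoring through $\widehat H$, is again killed on an open subgroup by (i), so after twisting $\rho$ by an extension of $\chi^{-1}$ one gets an honest extension of $\rho_H$. It is automatically absolutely simple, and a last twist by a character trivial on $H$ (after one more finite index reduction) arranges finite determinant, giving the required admissible extension relative to $H$. The main obstacle is the vanishing in the third paragraph — the reason profinite completion is indispensable — while the rest is bookkeeping with Schur's and Malcev's theorems and the standard exactness properties of profinite completion.
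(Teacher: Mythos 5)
Your argument is correct in outline and reaches the same conclusion, but the mechanism for $1)\Rightarrow 3)$ is genuinely different from the paper's. The paper spreads out $\im(\rho_H)$ into $\GL_r(R)$ for a finitely generated $\Z$-algebra $R$, embeds $R$ into the ring of integers of an $\ell$-adic field $E$, so that $\rho_H$ itself becomes a continuous representation of $\widehat H$; it then invokes a self-contained profinite statement (Proposition~\ref{prop:appext}: continuous Clifford theory over $E$, with the $H^2$-obstruction killed on an open subgroup via Hochschild--Serre) and finally specializes back to $K$ through a $K$-point of a finitely generated algebra. You instead keep the representation over $K$ throughout, perform Clifford theory discretely to build the central extension $\widetilde G$ of $G$ by $\mu_m$, and profinitely complete only this finite central extension; Malcev's theorem (residual finiteness of the finitely generated linear group $\Theta(\widetilde G)$) is the bridge identifying the discrete class $\beta\in H^2(G,\mu_m)$ with the pullback of $\widehat\beta\in H^2_{\rm cont}(\widehat G,\mu_m)$, after which the same two ingredients as in the paper's appendix --- vanishing of $\widehat\beta|_{\widehat H}$ (this is where $\widehat H\hookrightarrow\widehat G$ enters, in both proofs) and the death of $H^a(\widehat\Gamma,-)$ with finite coefficients on open subgroups, fed into Hochschild--Serre --- finish the job. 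Your route buys the elimination of the spreading-out/specialization bookkeeping and makes transparent exactly which finite datum is being profinitely completed; the paper's route buys a reusable, cleanly stated $\ell$-adic proposition and avoids the one place where your sketch is thinnest, namely the normalization forcing the cocycle $c$ into $\mu_m$: taking $r$-th roots of $\det T$ perturbs $T|_H=\rho_H$ by a finite-order character of $H$, so the exact splitting $s_H$ over $H$ survives only up to such a character, and this must be folded into your final character-correction step (exactly as the paper's appendix does with its character $\tau$ via inflation--restriction). Since you do carry out that final step, this is bookkeeping rather than a gap.
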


\begin{proof}
3) $\Rightarrow $ 2) $\Rightarrow$ 1) is obvious. We have to show 1) $\Rightarrow $
3). After replacing $G$ by a subgroup $G'$ of finite index with $H\subset G'$  the orbit
$\Gamma \cdot [\rho_H]$ is trivial.  As  $\Gamma$ permutes the finitely many
isomorphism classes of simple
constituents of $\rho_H$,  after replacing  $G$ by a subgroup $G'$ of finite index with $H\subset G'$,  we may assume that this permutation
action is trivial.
Then we may and do  assume that $\rho_H$ is simple.  Choose a basis of $V$ and identify
 $\GL_r(K)$ with $\GL(V)$.  As $H$ is finitely generated there exists a finitely generated
 $\mathbb Z$-subalgebra $R\subset K$ with $\im(\rho_H)\subset \GL_r(R)$. Choose an injective ring
 homomorphism $R\to \sO_E $ where $\sO_E\subset E$ is the ring of integer of an $\ell$-adic field $E$
  for some  prime number $\ell$.
 The induced representation $\rho_H\colon H\to \GL_r(E)$ factors
 through a continuous representation of $\widehat H$, which is simple as well. By Proposition~\ref{prop:appext} we can  essentially extend it to a continuous
 representation $\widehat \rho\colon \widehat G\to \GL_r(E)$ with finite determinant.
 As $G$ is finitely generated
 the image of $\widehat \rho|_G$  has values  in  $\GL_r(A)$, where $A$ is a finitely generated
 $\mathrm{Frac}(R)$-subalgebra of $E$. Choose a $K$-point of $\Spec(A)$ over
 $\mathrm{Frac}(R)$ (recall that $K$ is assumed to be  algebraically closed).  It induces an extension $\rho\colon G\to \GL_r(K)$ of $\rho_H$ with finite determinant.
\end{proof}

\subsection{Algebraically isomonodromic representations}\label{subsec:algisoreps}

In this subsection we collect some facts about algebraically isomonodromic representations
relative to $H$.

\begin{lem}\label{lem:algisocon}
  If $\rho $ is algebraically isomonodromic relative to $H$,  then
  $
\mathrm{Mo}(\rho|_H ) \to \mathrm{Mo}(\rho)
$
is essentially an isomorphism.
In particular,
$
\mathrm{End}_\rho(V) \xrightarrow\sim \mathrm{End}_{\rho|_H}(V)
$
is essentially an isomorphism.
\end{lem}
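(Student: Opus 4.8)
The plan is to work entirely with the algebraic groups $M := \mathrm{Mo}(\rho)$ and $M_H := \mathrm{Mo}(\rho|_H)$, viewed as Zariski-closed subgroups of $\GL(V)$ with $M_H \subseteq M$; the hypothesis that $\rho$ is algebraically isomonodromic relative to $H$ is precisely that $M_H^0 = M^0$ inside $M$. Since $M^0$ is an open normal subgroup of $M$ of finite index, its cosets are open, the finite quotient $\pi_0(M) := M/M^0$ is well defined, and $M_H/M_H^0 = M_H/M^0$ injects into $\pi_0(M)$. Because $\rho(H)$ is Zariski-dense in $M_H$ and each coset of $M^0$ in $M_H$ is open, $\rho(H)$ meets every such coset; hence the composite homomorphism $\phi \colon G \xrightarrow{\rho} M \twoheadrightarrow \pi_0(M)$ satisfies $\phi(H) = \pi_0(M_H) \subseteq \pi_0(M)$.

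Now set $G' := \phi^{-1}\bigl(\pi_0(M_H)\bigr)$. This is a finite-index subgroup of $G$ containing $H$, so replacing $G$ by $G'$ is permitted in an essential statement. Let $M'$ be the preimage of $\pi_0(M_H)$ under $M \to \pi_0(M)$; it is a closed subgroup of $M$ with $(M')^0 = M^0$ and with image $\pi_0(M_H)$ in $\pi_0(M)$, so $M'$ and $M_H$ have the same identity component and the same component group inside $\pi_0(M)$, which forces $M' = M_H$. By construction $\rho(G') \subseteq M' = M_H$, while $\rho(G') \supseteq \rho(H)$ is dense in $M_H$; therefore $\mathrm{Mo}(\rho|_{G'}) = \overline{\rho(G')} = M_H = \mathrm{Mo}(\rho|_H)$. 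This says exactly that $\mathrm{Mo}(\rho|_H) \to \mathrm{Mo}(\rho)$ is essentially an isomorphism.

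For the final assertion, recall that for any subgroup $\Gamma_0 \subseteq \GL(V)$ the centralizer of $\Gamma_0$ in $\mathrm{End}(V)$ is Zariski closed, hence equals the centralizer of $\overline{\Gamma_0}$; applying this with $\Gamma_0 = \rho(G)$ and $\Gamma_0 = \rho(H)$ identifies $\mathrm{End}_\rho(V)$ with the centralizer of $M$ and $\mathrm{End}_{\rho|_H}(V)$ with the centralizer of $M_H$. After replacing $G$ by the subgroup $G'$ above we have $\mathrm{Mo}(\rho|_{G'}) = M_H$, so both endomorphism algebras equal the centralizer of $M_H$ and the restriction map is an isomorphism. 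The only point requiring care is the bookkeeping with component groups in the middle paragraph — in particular the identification $M' = M_H$ and the verification that $\rho(G')$ actually lands in $M_H$ rather than in a larger subgroup; everything else is formal.
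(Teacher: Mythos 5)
Your proposal is correct and follows essentially the same route as the paper: both arguments use that $\mathrm{Mo}(\rho|_H)^0=\mathrm{Mo}(\rho)^0$ forces $\pi_0(\mathrm{Mo}(\rho|_H))\subseteq\pi_0(\mathrm{Mo}(\rho))$ and then replace $G$ by the preimage of $\pi_0(\mathrm{Mo}(\rho|_H))$ under $G\to\pi_0(\mathrm{Mo}(\rho))$. You merely spell out more of the bookkeeping (the identification $M'=M_H$, density of $\rho(H)$ in each coset, and the centralizer argument for the endomorphism algebras), all of which is sound.
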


\begin{proof}
  Under the isomonodromic  assumption,  ${\rm Mo}(\rho|_H)\subset {\rm Mo}(\rho)$ is a
  (normal)  finite index algebraic subgroup, which induces a finite index (normal)  embedding
  $\pi_0( {\rm Mo}(\rho|_H) ) \subset  \pi_0({\rm Mo}(\rho))$  of finite groups. After
  replacing $G$ by the preimage of $\pi_0( {\rm Mo}(\rho|_H) )$ the map   $
\mathrm{Mo}(\rho|_H ) \to \mathrm{Mo}(\rho)
$ is an isomorphism.
    \end{proof}

  \begin{lem}\label{lem:algisoadmcom}
    Assume $K$ is algebraically closed.
If $\rho\colon G\to \GL(V)$ is algebraically isomonodromic relative to $H$ and
$\mathrm{Mo}(\rho)^0$ semisimple, then $\rho$ is essentially admissible relative to $H$.
\end{lem}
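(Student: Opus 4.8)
The plan is to reduce the statement to an application of Lemma~\ref{lem:semi-simplealg} together with Lemma~\ref{lem:algisocon}. We want to show: if $\rho\colon G\to\GL(V)$ is algebraically isomonodromic relative to $H$ and $\mathrm{Mo}(\rho)^0$ is semi-simple, then $\rho$ is essentially admissible relative to $H$, i.e.\ after passing to a finite-index subgroup $G'\supset H$ the representation $\rho|_{G'}$ is admissible and each of its simple constituents remains simple on $H$.

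First I would invoke Lemma~\ref{lem:semi-simplealg}: since $\mathrm{Mo}(\rho)^0$ is semi-simple, $\rho|_{G'}$ is admissible for every finite-index $G'\subset G$. In particular $\rho$ itself is admissible, so what remains is purely the ``relative to $H$'' part, namely that the simple constituents of $\rho|_{G'}$ stay simple when restricted to $H$, for a suitable $G'$. Next, by Lemma~\ref{lem:algisocon}, the isomonodromy hypothesis gives that, after replacing $G$ by a finite-index subgroup containing $H$ (the preimage of $\pi_0(\mathrm{Mo}(\rho|_H))$), the inclusion $\mathrm{Mo}(\rho|_H)\to\mathrm{Mo}(\rho)$ is an isomorphism of algebraic groups, and consequently $\mathrm{End}_\rho(V)\xrightarrow{\sim}\mathrm{End}_{\rho|_H}(V)$. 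I would make this replacement once and for all, so from now on $\mathrm{Mo}(\rho|_H)=\mathrm{Mo}(\rho)$.

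Now decompose $V=\bigoplus_i V_i$ into simple $G$-subrepresentations (possible since $\rho$ is semi-simple). Each $V_i$ is a simple module over the group algebra, equivalently a simple module over the image of $K[G]$ in $\mathrm{End}(V)$, whose commutant is $\mathrm{End}_\rho(V)$; a subspace is a simple $\rho$-subrepresentation precisely when it is the image of a minimal right ideal of $\mathrm{End}_\rho(V)$ (as used in the proof of Proposition~\ref{prop:uniqextadmis}). Since $\mathrm{End}_\rho(V)=\mathrm{End}_{\rho|_H}(V)$, the minimal right ideals coincide, hence each $V_i$ is also a simple $\rho|_H$-subrepresentation. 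This is exactly the condition that $\rho$ is admissible relative to $H$. The main obstacle, and the only genuinely substantive point, is verifying that the equality of commutants really forces the $G$-simple pieces to be $H$-simple --- i.e.\ that simplicity of a subrepresentation is detected by the commutant via minimal right ideals; this is the double-centralizer / Wedderburn bookkeeping already invoked in Proposition~\ref{prop:uniqextadmis}, and I would cite that argument rather than repeat it. Finally, combining with the admissibility of $\rho|_{G'}$ from Lemma~\ref{lem:semi-simplealg} (restricting further if needed, which preserves all the above since $\mathrm{End}$ can only grow and the argument applies verbatim to $G'$), we conclude that $\rho$ is essentially admissible relative to $H$.
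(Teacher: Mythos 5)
Your argument is correct and follows the same route as the paper: the paper likewise reduces via Lemma~\ref{lem:algisocon} to the case where $\mathrm{Mo}(\rho|_H)\to\mathrm{Mo}(\rho)$ is an isomorphism and then declares the conclusion clear, which is exactly the part you spell out. (Once the two monodromy groups agree one can also argue more directly that a subspace is $G$-stable iff it is $\mathrm{Mo}(\rho)$-stable iff it is $\mathrm{Mo}(\rho|_H)$-stable iff it is $H$-stable, so $G$-simple constituents are automatically $H$-simple; your commutant/minimal-right-ideal route is fine too, but note it tacitly uses that $\rho|_H$ is itself semi-simple, which does hold here by Lemma~\ref{lem:semi-simplealg} applied to $H$, or by Clifford's theorem.)
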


\begin{proof}
By Lemma~\ref{lem:algisocon} we may assume that $\mathrm{Mo}(\rho|_H ) \to
\mathrm{Mo}(\rho)$ is an isomorphism. In that case the lemma is clear.
\end{proof}

\begin{prop}\label{prop:admalgisom}
  Assume $K$ is algebraically closed.
Let $\rho\colon G\to \GL(V)$ be a represenation such that $\mathrm{Mo}(\rho|_H)$ is
connected and semisimple. Then $\rho$ is essentially admissible relative to $H$ if and only if  $\rho$ is
algebraically isomonodromic relative to $H$.
\end{prop}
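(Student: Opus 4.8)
The plan is to prove the two implications separately, using the hypothesis that $\mathrm{Mo}(\rho|_H)$ is connected and semi-simple throughout. For the direction ``algebraically isomonodromic $\Rightarrow$ essentially admissible relative to $H$'', I would simply invoke Lemma~\ref{lem:algisoadmcom}: if $\rho$ is algebraically isomonodromic relative to $H$ and $\mathrm{Mo}(\rho|_H)$ is semi-simple, then since the isomonodromy embedding $\mathrm{Mo}(\rho|_H)^0\to\mathrm{Mo}(\rho)^0$ is an isomorphism and $\mathrm{Mo}(\rho|_H)=\mathrm{Mo}(\rho|_H)^0$ is semi-simple by connectedness, we get that $\mathrm{Mo}(\rho)^0$ is semi-simple as well, and Lemma~\ref{lem:algisoadmcom} applies directly. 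So this direction is essentially free given the earlier lemmas.

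For the converse, ``essentially admissible relative to $H$ $\Rightarrow$ algebraically isomonodromic relative to $H$'', I would proceed as follows. After replacing $G$ by a finite index subgroup containing $H$ we may assume $\rho$ itself is admissible relative to $H$. By Lemma~\ref{lem:semi-simplealg} (applied with $H$ in place of $G$, noting $\rho|_H$ is admissible in restriction to every finite index subgroup since $\mathrm{Mo}(\rho|_H)$ is connected semi-simple), or more directly since $\mathrm{Mo}(\rho|_H)$ is semi-simple by hypothesis, $\rho|_H$ is semi-simple and all simple constituents have finite determinant. Admissibility of $\rho$ relative to $H$ means the simple constituents of $\rho$ stay simple over $H$; combined with $\mathrm{Mo}(\rho|_H)$ semi-simple, Lemma~\ref{lem:semi-simplealg} gives that $\mathrm{Mo}(\rho)^0$ is semi-simple. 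Now I want to compare $\mathrm{Mo}(\rho|_H)$ with $\mathrm{Mo}(\rho)^0$. Write $M=\mathrm{Mo}(\rho)^0$ and $M_H=\mathrm{Mo}(\rho|_H)$; both are connected semi-simple, and $M_H\subseteq M$. The key point is that the centralizer algebras agree: by Lemma~\ref{lem:admend}, $\mathrm{End}_\rho(V)\xrightarrow{\sim}\mathrm{End}_{\rho|_H}(V)$, which means the centralizer of $M$ in $\mathrm{End}(V)$ equals the centralizer of $M_H$ in $\mathrm{End}(V)$. By the double centralizer / Jacobson density theorem for the semi-simple algebraic groups $M_H\subseteq M$ acting on $V$, the algebra generated by $M$ in $\mathrm{End}(V)$ equals the double centralizer, hence equals the algebra generated by $M_H$; so $M$ and $M_H$ generate the same subalgebra of $\mathrm{End}(V)$. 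Two connected reductive subgroups of $\mathrm{GL}(V)$ with the same span in $\mathrm{End}(V)$ and the same (up to the identity component) — here I would argue that a connected reductive group is determined by its span as follows: $M_H\subseteq M$ with $M$ connected semi-simple, and if the linear spans coincide then $\mathrm{Lie}(M_H)$ and $\mathrm{Lie}(M)$ have the same image in $\mathrm{End}(V)$; since both act faithfully (the representations are faithful on the connected groups, as $\rho$ is faithful on its image and we may pass to $M$), $\mathrm{Lie}(M_H)=\mathrm{Lie}(M)$, hence $M_H=M$ by connectedness. Therefore $\mathrm{Mo}(\rho|_H)^0=\mathrm{Mo}(\rho|_H)=M=\mathrm{Mo}(\rho)^0$, i.e.\ $\rho$ is algebraically isomonodromic relative to $H$.

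The main obstacle I anticipate is making the last step — ``same span in $\mathrm{End}(V)$ $\Rightarrow$ same group'' — fully rigorous, since $V$ need not be a faithful representation of $M$ a priori and one has to be careful about whether the span determines the Lie algebra. The cleanest fix is: since $\mathrm{Mo}(\rho)$ is by definition the Zariski closure of $\rho(G)\subseteq\mathrm{GL}(V)$, the representation $V$ is faithful on $\mathrm{Mo}(\rho)$, hence on $M=\mathrm{Mo}(\rho)^0$ and a fortiori on $M_H$; then $\mathrm{Lie}(M_H)\hookrightarrow\mathrm{Lie}(M)\hookrightarrow\mathrm{End}(V)$, and equality of images forces equality of Lie subalgebras, hence of connected subgroups. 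A secondary point to check carefully is that the reduction steps (passing to finite index subgroups, using $K$ algebraically closed) are compatible with the ``essentially'' qualifier and do not disturb connectedness of $\mathrm{Mo}(\rho|_H)$; since $\mathrm{Mo}(\rho|_{H'})^0=\mathrm{Mo}(\rho|_H)^0=\mathrm{Mo}(\rho|_H)$ for finite index $H'\subseteq H$ when the latter is connected, this is harmless.
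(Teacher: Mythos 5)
Your ``$\Leftarrow$'' direction is fine and is exactly the paper's: it is an immediate application of Lemma~\ref{lem:algisoadmcom}, once one notes that isomonodromy transports semi-simplicity from $\mathrm{Mo}(\rho|_H)=\mathrm{Mo}(\rho|_H)^0$ to $\mathrm{Mo}(\rho)^0$.

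The ``$\Rightarrow$'' direction, however, has a genuine gap at its final and decisive step. From Lemma~\ref{lem:admend} you correctly deduce that $M=\mathrm{Mo}(\rho)^0$ and $M_H=\mathrm{Mo}(\rho|_H)$ have the same centralizer in $\mathrm{End}(V)$, and hence, by the double centralizer theorem for semi-simple modules, the same associative span. But the assertion that two connected semi-simple subgroups $M_H\subseteq M\subseteq\GL(V)$ with the same span must coincide is false: take $M_H=\mathrm{SO}(n)\subset M=\mathrm{SL}(n)$ acting on $V=K^n$ with $n\ge 3$. Both are connected and semi-simple, both act irreducibly, so both have centralizer $K\cdot\mathrm{id}$ and span all of $\mathrm{End}(V)$, yet $\mathfrak{so}(n)\ne\mathfrak{sl}(n)$. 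Equality of the associative spans of the groups does not force equality of their Lie algebras --- each Lie algebra sits inside the common span but is not determined by it --- so your proposed repair via faithfulness of $V$ does not address the actual problem. What your argument never uses, and what is indispensable, is that $H$ is normal in $G$, so that $M_H$ is a \emph{normal} connected subgroup of $\mathrm{Mo}(\rho)$ (note that normality rules out the counterexample above). This is precisely how the paper proceeds: after reducing to $\mathrm{Mo}(\rho)$ connected and semi-simple, a proper normal connected $M_H$ admits an almost-direct complement $W$ centralizing $M_H$ (\cite[14.2]{Hum75}); $W$ preserves each simple $\rho$-constituent $V'$ and commutes with the simple $H$-action on it, so by Schur's lemma it acts through a character, which is trivial since $W$ is semi-simple, whence $W=1$ by faithfulness. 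Your centralizer computation could in fact be combined with normality to the same end ($W$ centralizes $M_H$, hence lies in $\mathrm{End}_{M_H}(V)=\mathrm{End}_{\mathrm{Mo}(\rho)}(V)$, hence is central in $\mathrm{Mo}(\rho)$, hence abelian, hence trivial), but as written the normality input is missing and the proof does not close. A secondary, repairable point: to get semi-simplicity of $\mathrm{Mo}(\rho)^0$ you cite Lemma~\ref{lem:semi-simplealg}, whose hypothesis is admissibility of $\rho|_{G'}$ for \emph{all} finite-index $G'\subseteq G$; you only have admissibility of $\rho$ itself relative to $H$, so one should instead rerun the argument of that lemma, as the paper does.
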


\begin{proof}
The implication ``$\Leftarrow$'' follows from Lemma~\ref{lem:algisoadmcom}.

\smallskip

For ``$\Rightarrow$'' we first observe that by replacing $G$ by the preimage of
$\mathrm{Mo}(\rho)^0$, which is a subgroup of finite index containing $H$, we  may assume
that $\mathrm{Mo}(\rho)$ is connected.
The argument in the proof of
Lemma~\ref{lem:semi-simplealg} shows that $\mathrm{Mo}(\rho)$ is semisimple. If the embedding
$\mathrm{Mo}(\rho|_H)\to \mathrm{Mo}(\rho)$ is not an isomorphism then there exists a
non-trivial
normal connected algebraic subgroup $W\subset
\mathrm{Mo}(\rho)$ centralizing  $\mathrm{Mo}(\rho|_H)$  such that
\[
W\times \mathrm{Mo}(\rho|_H) \to \mathrm{Mo}(\rho)
\]
is surjective with finite kernel, see e.g.~\cite[14.2]{Hum75}.
The $W$ action stabilizes  every simple $\rho$-subrepresentation
$V'\subset V$ and commutes with the action of $H$ on $V'$, which is simple. So by Schur's
lemma $W$ acts  by a character on  $ V'$, but all characters are trivial as $W$ is
semisimple. So $W$ has to be trivial as it acts faithfully on $V$.
\end{proof}

\begin{prop}\label{prop:finindexadmext}
  Assume that $K$  is algebraically closed. Let $\rho_H\colon H\to \GL(V)$ be a representation with
  semisimple algebraic monodromy group. Let $G'\subset G$ be a subgroup of finite index,
  $H'=H\cap G'$. If $\mathrm{Mo}(\rho|_{H'})$ is connected and $\rho|_{H'}$ has an admissible
  extension $\rho'\colon G'\to \GL(V)$ relative to $H'$, then $\rho$ essentially has an
  algebraically isomonodromic extension $\rho\colon G\to \GL(V)$ relative to $H$.
\end{prop}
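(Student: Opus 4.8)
The plan is to reduce the statement about $G$ to the already-established situation over the finite index subgroup $G'$, using the Clifford-theoretic induction/restriction machinery together with the uniqueness result for admissible extensions.

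First I would set up the data: we have $\rho_H\colon H\to\GL(V)$ with semisimple algebraic monodromy group, $G'\subset G$ of finite index, $H'=H\cap G'$ (which is then of finite index in $H$, and normal in $G'$), and by hypothesis $\rho|_{H'}$ has an admissible extension $\rho'\colon G'\to\GL(V)$ relative to $H'$, with $\mathrm{Mo}(\rho_H|_{H'})$ connected. The goal is an algebraically isomonodromic extension of $\rho_H$ to some finite index subgroup of $G$ containing $H$. The strategy is: from $\rho'$ produce an extension of $\rho_H$ (or of a finite index restriction) to $G$, and then argue it is algebraically isomonodromic via Proposition~\ref{prop:admalgisom}, after verifying the admissibility hypotheses. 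Concretely, consider the induced representation $\mathrm{Ind}_{G'}^{G}\rho'$, which is a representation of $G$ of dimension $[G:G']\cdot\dim V$. Its restriction to $H$ is $\mathrm{Ind}_{H'}^{H}(\rho|_{H'})$ by the Mackey/base-change formula, since $G=G'H$ may fail but in any case the double-coset decomposition expresses $\mathrm{Res}^G_H\mathrm{Ind}_{G'}^{G}\rho'$ as a sum of induced representations over $H$, each of which — using that $\rho'$ extends $\rho_H|_{H'}$ and that conjugates of $\rho_H$ by elements of $\Gamma=G/H$ are controlled — contains $\rho_H$ as a constituent. So $\rho_H$ appears as a direct summand (an isotypic piece, after passing to a finite index subgroup of $G$ that trivialises the relevant $\Gamma$-permutation action on simple constituents, exactly as in the proof of Proposition~\ref{prop:existextadmis}) of an honest representation of a finite index subgroup $G''$ of $G$ containing $H$.

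Next I would extract from this the desired extension: replacing $G$ by $G''$, we have a representation $\sigma\colon G''\to\GL(W)$ with $W\supset V$ as an $H$-subrepresentation which is a direct summand. Since $\rho_H$ has semisimple algebraic monodromy group, by Lemma~\ref{lem:semi-simplealg} it is admissible in restriction to all finite index subgroups; in particular $V$ as an $H$-module is a sum of simple pieces with finite determinant, and — after a further finite index passage to make the constituents of $\sigma|_H$ split into $\Gamma$-orbits correctly — the isotypic $H$-summand of $W$ corresponding to the simple constituents of $\rho_H$ is a $G''$-subrepresentation (a $G''$-submodule, because $G''$ permutes $H$-isotypic summands and we have arranged the relevant orbit to be a single point). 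This yields an extension $\rho\colon G''\to\GL(V)$ of $\rho_H$. It remains to check $\rho$ is essentially algebraically isomonodromic relative to $H$. For that, after restricting further so that $\mathrm{Mo}(\rho_H|_{H})$ is connected (possible since its $\pi_0$ is finite), we are in the situation of Proposition~\ref{prop:admalgisom}: it suffices to show $\rho$ is essentially admissible relative to $H$. Admissibility of $\rho$ (semisimplicity plus finite determinant of simple constituents) follows because $\rho|_H=\rho_H$ is admissible and, by Lemma~\ref{lem:semi-simplealg} applied to $\rho$ — once we know $\mathrm{Mo}(\rho)^0$ is semisimple, which itself follows from the argument in the proof of that lemma since the unipotent radical and the central-torus radical of $\mathrm{Mo}(\rho)$ must be trivial given that $\rho|_H$ has semisimple algebraic monodromy group and $\rho$ is semisimple with finite determinant — we get admissibility relative to $H$ of $\rho|_{G'''}$ for all finite index $G'''$. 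The ``simple constituents of $\rho$ stay simple when restricted to $H$'' part of admissibility-relative-to-$H$ uses that the simple constituents of $\rho|_H$ are exactly the $\Gamma$-translates of those of $\rho_H$, which we have arranged to be $G$-stable.

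I expect the main obstacle to be the bookkeeping in the induction/restriction step: tracking which finite index subgroup one passes to, and checking that after these passages the $H$-isotypic component carrying $\rho_H$ is genuinely a $G''$-submodule rather than merely permuted by $G''$ — this is exactly the kind of ``essential'' hedging the paper builds into its definitions, and one must be careful that the various finite index reductions (trivialising the $\Gamma$-action on constituents; making $\mathrm{Mo}(\rho_H)$ connected; making $\pi_0(\mathrm{Mo}(\rho))$ matched up) can be performed simultaneously, i.e.\ by intersecting finitely many finite index subgroups. A secondary technical point is verifying $\mathrm{Mo}(\rho)^0$ is semisimple: this is where one re-runs the unipotent-radical/central-torus argument of Lemma~\ref{lem:semi-simplealg}, using the finiteness of $\det\rho$ on each simple $G''$-constituent — and that finiteness in turn is inherited from $\rho_H$ via the admissibility of the induced representation's constituents. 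Once these reductions are in place, Proposition~\ref{prop:admalgisom} closes the argument immediately.
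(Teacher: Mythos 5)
Your overall strategy --- induce $\rho'$ from $G'$ up to $G$, locate $\rho_H$ as a direct summand of the restriction to $H$, and promote it to a subrepresentation over a finite-index subgroup of $G$ --- is the paper's strategy, but the promotion step has a genuine gap. You propose to take ``the isotypic $H$-summand of $W$ corresponding to the simple constituents of $\rho_H$'' and argue that it is a $G''$-submodule because $G''$ permutes isotypic components. Even granting that, this isotypic component restricts to $H$ as $\bigoplus_i V_i\otimes M_i$ with multiplicity spaces $M_i$ that are in general strictly larger than the multiplicities occurring in $\rho_H$ (already the identity double coset contributes $\mathrm{Ind}^H_{H'}(\rho_H|_{H'})\cong \rho_H\otimes K[H/H']$, i.e.\ several copies of each constituent), so it does not yield an extension of $\rho_H$ to $\GL(V)$. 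To cut the isotypic component down to a $G''$-stable copy of $V$ you need the $H$-equivariant projector onto the summand $\rho_H$ of $\tilde\rho|_H$ to be $G''$-equivariant, i.e.\ you need $\mathrm{End}_{\tilde\rho}(W)\to\mathrm{End}_{\tilde\rho|_H}(W)$ to be (essentially) an isomorphism. This is Lemma~\ref{lem:algisocon}, and it requires first proving that the \emph{full induced representation} $\tilde\rho$ is algebraically isomonodromic relative to $H$ --- a step absent from your proposal. The paper establishes it by applying Proposition~\ref{prop:admalgisom} to $\tilde\rho|_{G'}$ relative to $H'$: there $\tilde\rho|_{H'}$ is a direct sum of copies of $\rho_H|_{H'}$, whose algebraic monodromy group is connected by hypothesis, and $\tilde\rho|_{G'}$ is a direct sum of conjugates of $\rho'$, hence admissible relative to $H'$. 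Without this input your extraction of $\rho$ from $\tilde\rho$ does not go through: $G''$ could act irreducibly on a block $V_i\otimes M_i$ even though $H$ acts as $V_i^{\oplus\dim M_i}$.

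A second, related problem is your final appeal to Proposition~\ref{prop:admalgisom} for $\rho$ relative to $H$ ``after restricting further so that $\mathrm{Mo}(\rho_H|_H)$ is connected''. In the paper's ``essentially'' convention one only shrinks $G$ to finite-index subgroups containing $H$; the group $H$, and hence $\mathrm{Mo}(\rho_H)$, is fixed, and the hypotheses give only semisimplicity of $\mathrm{Mo}(\rho_H)^0$ together with connectedness of $\mathrm{Mo}(\rho|_{H'})$ --- not of $\mathrm{Mo}(\rho_H)$. So the connectedness hypothesis of Proposition~\ref{prop:admalgisom} cannot be arranged at the level of $H$; it has to be used at the level of $H'$, which is precisely why the statement carries the hypothesis on $\mathrm{Mo}(\rho|_{H'})$ and why the paper verifies isomonodromy of $\tilde\rho|_{G'}$ relative to $H'$ and then transfers it to $H$ using that identity components of algebraic monodromy groups are insensitive to passing to finite-index subgroups. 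Once the argument is organized this way, your somewhat circular re-run of Lemma~\ref{lem:semi-simplealg} to get semisimplicity of $\mathrm{Mo}(\rho)^0$ is not needed.
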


\begin{proof}
After replacing $G'$ by a subgroup of finite index we may  assume that $G' \subset G$ is normal,
thus $H'\subset H$ is normal as well. Replace $G$ by the preimage of $H/H'\subset G/G'$ via $G\to G/G'$. Then by definition
 $[G:G']=[H:H']$. We set
 $$\tilde \rho=\mathrm{Ind}^G_{G'}(\rho').$$
 We show in the sequel that there is a direct summand of $\tilde \rho$ which extends  $\rho_H$.
 First observe that $\tilde \rho$  is algebraically isomonodromic
relative to $H$. Indeed, it suffices to check that $\tilde \rho|_{G'}$ is algebraically
isomonodromic  relative to $H'$.  Noting that
$\tilde\rho|_{H'}$ is a direct sum of copies of $\rho_H|_{H'}$, so
$\mathrm{Mo}(\tilde\rho|_{H'})$ is connected and semisimple, by Proposition~\ref{prop:admalgisom} we have  to show that
$\tilde
\rho|_{G'}$ is admissible relative to $H'$. This holds as $\tilde
\rho|_{G'}$
 it is a direct sum of
$H$-conjugates of $\rho'$ and $\rho'$ is admissible relative to $H'$.
On the other hand,
\[
\tilde \rho|_H \cong \mathrm{Ind}^H_{H'}(\rho_H|_{H'})
\]
contains $\rho_H$ as a direct summand. So by Lemma~\ref{lem:algisocon} after replacing
$G$ by a subgroup of finite index containing $H$ we find a direct summand of $\tilde \rho$
which extends $\rho_H$.
\end{proof}

\begin{proof}[Proof of Theorem~\ref{thm:maingroup}]
Essential uniqueness in   i) follows from Lemma~\ref{lem:algisoadmcom} and
Proposition~\ref{prop:uniqextadmis}.

\smallskip

We clearly have c) $\Rightarrow $ b) $\Rightarrow$ a).
We now reduce the proof of the implication a) $\Rightarrow $ c)  to the case where $ K$  is algebraically closed.  Let $\overline K$ be an algebraic closure of $K$.
Consider an algebraically isomonodromic representation $\rho \colon G\to \GL(V\otimes_K
\overline K)$ relative to $H$ with $\rho|_H$ stabilizing $V$ (thus defined over $K$). Then, as $G$ is finitely
generated, there exists a finite Galois subextension $\tilde K\subset \overline K$ of $K$
 such that $\rho$
stabilizes $V\otimes_K \tilde K$.   The Galois group  ${\rm Gal}(\tilde K/K)$ acts on $V\otimes_K \tilde K$ via its action on $\tilde K$, thus on $\GL(V\otimes_K \tilde K)$. For $\gamma\in {\rm Gal}(\tilde K/K)$ we define $\gamma\cdot \rho$ by the formula $\gamma\cdot \rho(g)=\gamma \cdot (\rho(g))$. Then ${\rm Mo}( \gamma\cdot \rho)=\gamma\cdot {\rm Mo}(\rho) $ thus both $\gamma\cdot \rho$  and $\rho$ are algebraically isomonodromic extensions relative to $H$ of $\rho|_H =\gamma\cdot \rho|_H$.  Then i) implies that there is a finite index subgroup $G^\gamma \subset G$ containing $H$ such that
$\rho|_{G^\gamma}= \gamma\cdot \rho|_{G^\gamma}$. We define $G'=\cap_{\gamma\in {\rm Gal}(\tilde K/K)} G^\gamma$ which contains $H$ and is a subgroup of finite index in $G$. Then $\rho|_{G'}$ is ${\rm Gal}(\overline K/K)$  invariant thus descends to $K$.

\smallskip

Now we prove  a) $\Rightarrow $ c) for $K =\overline K$. The condition $\widehat H \to
\widehat G$ injective allows us  to find a subgroup of finite
index $G'\subset G$, $H'=H\cap G'$, such that $\mathrm{Mo}(\rho_H|_{H'})$ is connected.
After replacing $G'$ by a subgroup of finite index containing $H'$,
Proposition~\ref{prop:existextadmis}   implies  that there  exists an admissible extension
$\rho'\colon G'\to \GL(V)$ of $\rho_H|_{H'}$ relative to $H'$.  Proposition~\ref{prop:finindexadmext}
shows then the essential existence of an algebraically isomonodromic extension $\rho\colon
G\to\GL(V)$ of $\rho_H$.

\smallskip

We now prove ii).
After replacing $K$ by a finite extension, we may assume that all simple constituents of $\rho_H$ are
absolutely simple. By Lemma~\ref{lem:algisocon} we can replace $G$ by a subgroup of finite
index containing $H$ such that $\mathrm{End}_\rho(V) \xrightarrow\sim \mathrm{End}_{\rho_H}(V)$ is an
isomorphism. Then we have to show ii) only for the simple subrepresentations $V'\subset
V$. So we  assume now that $\rho$ is absolutely simple and that $\mathcal V\subset V$ is a
$\rho_H$-stable lattice.

For $g\in G$ the lattice $\rho(g) \mathcal V$ is stabilized by the action of $H$ via
$\rho_H$   as $\rho(h)\rho(g)\mathcal V=\rho(g) \rho(g^{-1} h g) \mathcal V= \rho(g) \mathcal V$.
By the  the Jordan-Zassenhaus theorem \cite[Sec.~79]{CR62}  there are only finitely many such lattices up to
isomorphism,  which by Schur's lemma means up to homothety. So after replacing $G$   by a subgroup
of finite index containing $H$  we can assume that for all  $g\in G$ there is $\lambda_g\in
K^\times$ with
 $  \rho(g) \mathcal V = \lambda_g \mathcal V$.
As $\rho$ has finite determinant,   $\lambda_g\in
  \sO_K^\times$ and $\mathcal V$ is stabilized by the $\rho$-action of $G$.

\smallskip

We now prove iii). Without loss of generality $K=\overline K$. By assumption $\rho_H$ is absolutely simple and has finite determinant, thus
is admissible and the extension $\rho\colon G\to \mathrm{GL}(V)$ of $\rho_H$ with finite determinant
is admissible relative to $H$. Such an extension is essentially unique by Proposition~\ref{prop:uniqextadmis}.
But an algebraically isomonodromic extension of $\rho_H$, which essentially exists by the above argument, has finite determinant, so has to
essentially agree with $\rho$.
\end{proof}

\begin{prop}
  Assume that $\widehat H\to \widehat G$ is injective. Then
finite direct sums of algebraically isomonodromic representations with semisimple
algebraic monodromy are algebraically isomonodromic. Direct summands of algebraically isomonodromic representations with semisimple
algebraic monodromy are algebraically isomonodromic.
\end{prop}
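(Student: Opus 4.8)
\emph{Strategy.} The plan is to treat the two assertions separately: the statement about direct summands is essentially formal, and the one about direct sums is the substantive one. I would first record two facts used throughout. \textbf{(i)} If $f\colon A\to B$ is a surjective homomorphism of algebraic $K$-groups, then $f(A^0)=B^0$, since $f(A^0)$ is a connected subgroup of $B$ of finite index. \textbf{(ii)} Since $H$ is normal in $G$, conjugation by $\rho(g)$ preserves $\rho(H)$ for every $g\in G$, hence preserves its Zariski closure $\mathrm{Mo}(\rho|_H)$; thus $\mathrm{Mo}(\rho)$ normalises $\mathrm{Mo}(\rho|_H)$, so $\mathrm{Mo}(\rho|_H)^0$ is a connected normal subgroup of $\mathrm{Mo}(\rho)^0$ (and in particular $\mathrm{Mo}(\rho|_H)^0\subseteq \mathrm{Mo}(\rho)^0$, so ``$\rho$ algebraically isomonodromic relative to $H$'' is the assertion that these two subgroups coincide). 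Finally, the formation of $\mathrm{Mo}$, of its identity component, and the condition of semisimplicity all commute with the base change $K\to\overline K$, and being algebraically isomonodromic relative to $H$ is the assertion that a morphism of $K$-groups is an isomorphism; so we may assume $K=\overline K$. (The hypothesis $\widehat H\to\widehat G$ injective turns out not to be needed below.)

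\emph{Direct summands.} Let $\rho=\rho'\oplus\rho''$ on $V=V'\oplus V''$ as $G$-representations, with $\rho$ algebraically isomonodromic relative to $H$. The coordinate projection $\GL(V')\times\GL(V'')\to\GL(V')$ carries $\mathrm{Mo}(\rho)$ onto $\mathrm{Mo}(\rho')$ and $\mathrm{Mo}(\rho|_H)$ onto $\mathrm{Mo}(\rho'|_H)$: the image of a homomorphism of algebraic groups is a closed subgroup, and it contains the image of $\rho$, resp.\ of $\rho|_H$, hence its closure. Applying this surjection to the equality $\mathrm{Mo}(\rho|_H)^0=\mathrm{Mo}(\rho)^0$ and invoking (i) yields $\mathrm{Mo}(\rho'|_H)^0=\mathrm{Mo}(\rho')^0$, i.e.\ $\rho'$ is algebraically isomonodromic relative to $H$; that $\mathrm{Mo}(\rho')^0$ is semisimple is the preceding Lemma on direct summands. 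This step is purely formal.

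\emph{Direct sums.} Let $\rho=\bigoplus_{i=1}^{n}\rho_i$ with each $\rho_i$ algebraically isomonodromic relative to $H$ and $\mathrm{Mo}(\rho_i)^0$ semisimple. By the preceding Lemma $L:=\mathrm{Mo}(\rho)^0$ is semisimple, and by (ii) it contains $M:=\mathrm{Mo}(\rho|_H)^0$ as a connected normal subgroup. Write $\mathrm{pr}_i$ for the projection of $\mathrm{Mo}(\rho)\subseteq\prod_j\mathrm{Mo}(\rho_j)$ onto the $i$-th factor; as in the previous paragraph $\mathrm{pr}_i$ maps $L$ onto $\mathrm{Mo}(\rho_i)^0$ and maps $M$ onto $\mathrm{Mo}(\rho_i|_H)^0$, and the latter equals $\mathrm{Mo}(\rho_i)^0$ precisely because $\rho_i$ is algebraically isomonodromic relative to $H$. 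Suppose, for contradiction, that $M\neq L$. Then, exactly as in the proof of Proposition~\ref{prop:admalgisom} and using the structure of connected semisimple groups (\cite[14.2]{Hum75}), there is a non-trivial connected normal subgroup $W\subseteq L$ which centralises $M$ and satisfies $W\cdot M=L$ with $W\cap M$ finite. For each $i$, $\mathrm{pr}_i(W)$ is then a connected subgroup of $\mathrm{Mo}(\rho_i)^0$ centralising $\mathrm{pr}_i(M)=\mathrm{Mo}(\rho_i)^0$; since $\mathrm{Mo}(\rho_i)^0$ is semisimple, its centraliser inside itself is its finite centre, so $\mathrm{pr}_i(W)$ is trivial. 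As $W\hookrightarrow\prod_i\mathrm{Mo}(\rho_i)$ and all of its coordinate projections vanish, $W=1$, a contradiction. Hence $M=L$, i.e.\ $\rho$ is algebraically isomonodromic relative to $H$, with semisimple algebraic monodromy as noted.

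\emph{Main difficulty.} The only step carrying real content is ruling out the complementary factor $W$ in the direct sum case. What makes it work is that algebraic isomonodromy of each summand forces $\mathrm{pr}_i(M)$ to be all of $\mathrm{Mo}(\rho_i)^0$, leaving no room for a connected subgroup of the semisimple group $\mathrm{Mo}(\rho_i)^0$ that centralises it; the only care needed elsewhere is the routine bookkeeping with identity components and with images of algebraic groups being closed.
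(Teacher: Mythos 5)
Your proof is correct, and it takes a more direct route than the one sketched in the paper. The paper's sketch reduces the direct--sum statement to the admissibility formalism: one passes (using the injectivity of $\widehat H\to\widehat G$) to a finite--index subgroup on which the relevant monodromy closures become connected, invokes Lemma~\ref{lem:algisoadmcom} to see that each summand is essentially admissible relative to $H$, observes that direct sums of admissible representations relative to $H$ are again admissible relative to $H$, and concludes via Proposition~\ref{prop:admalgisom}. You instead run the complement argument of Proposition~\ref{prop:admalgisom} directly at the level of identity components: with $M=\mathrm{Mo}(\rho|_H)^0$ connected and normal in the connected semisimple group $L=\mathrm{Mo}(\rho)^0$, you take the complementary product $W$ of almost--simple factors and kill it by noting that each projection $\mathrm{pr}_i(W)$ lies in the centre of $\mathrm{Mo}(\rho_i)^0$, precisely because $\mathrm{pr}_i(M)$ is already all of $\mathrm{Mo}(\rho_i)^0$ by isomonodromy of the summand; this replaces the paper's appeal to Schur's lemma on simple constituents by surjectivity of the coordinate projections. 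Two genuine gains of your version: you never need $\mathrm{Mo}(\rho|_H)$ itself to be connected (only its identity component, which is automatic), so, as you observe, the hypothesis that $\widehat H\to\widehat G$ be injective is not used; and the ``essentially'' bookkeeping disappears entirely, since algebraic isomonodromy depends only on identity components and is therefore insensitive to passing to finite--index subgroups containing $H$. Your treatment of the direct--summand part, via the surjection $\mathrm{Mo}(\rho)\surj\mathrm{Mo}(\rho')$ and the fact that a surjection of algebraic groups carries identity component onto identity component, is exactly the ``easy'' argument the paper has in mind, and your reduction to $K$ algebraically closed is sound because Zariski closures of groups of $K$-points, identity components, and semisimplicity all commute with the extension $K\to\overline K$.
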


\begin{proof}[Sketch of proof]
  The second part is easy. For the first part
we can assume without loss of generality that $K$ is algebraically closed, and we can argue
as in the proof of Theorem~\ref{thm:maingroup} reducing to the observation that direct
sums of admissible representations relative to $H$ are admissible relative to $H$.
\end{proof}

\section{Reminder on variations of Hodge structure}

In this section we summarize what we need about variations of Hodge structure. We could
not find some of the formulations in the literature, so we provide a few details.

\subsection{Polarized variations of $\C$-Hodge structure}\label{subsec:basichodgevar}

For our purpose it is useful to define polarized $\C$-Hodge structure in a form
without fixing a weight.
Let $V$ be a finite dimensional $\mathbb C$-vector space, $\mathsf F$ a finite decreasing
filtration of $V$ and $Q\colon V\times \overline V\to \C$ a perfect hermitian pairing.
We call the triple $(V,\mathsf F, Q)$ a {\it polarized $\C$-Hodge structure} ($
\C\text{-}{\sf PHS}$) if the {\it Hodge-Riemann relations} hold for   all $a\in \Z$:
\begin{itemize}
    \item  $V=\mathsf F^a\oplus (\mathsf F^a)^\perp$ and
 \item  $(-1)^a Q|_{\mathsf F^a\cap (\mathsf F^{a+1})^\perp} $  is positive definite.
\end{itemize}

If $N\colon V\to V$ is a nilpotent  endomorphism with
$Q(N v,w)=Q(v,N w)$ for all $v,w\in V$ then we call $(V,\mathsf F, Q,N)$  a {\it
  polarized mixed  $\C$-Hodge structure} if $N(\mathsf F^a)\subset \mathsf F^{a-1}$
and if the filtrations  induced by $\mathsf F$ on
 the monodromy graded pieces $\gr^{\mathsf M}_i V$ are compatible with the Lefschetz
 decomposition of $\oplus_i \gr^{\mathsf M}_{i} V$ with respect to $\overline N\colon
 \gr^{\mathsf M}_{i} V\to  \gr^{\mathsf M}_{i-2} V$
 and furthermore  for $i>0$ the primitive part
\[
\ker ( \gr^{\mathsf M}_i V \xrightarrow{\overline N^{i+1}}\gr^{\mathsf M}_{-i-2} V)
\]
endowed with
the pairing
\[
  \gr^{\mathsf M}_i V\times \gr^{\mathsf M}_i V\to \C, \quad
  (v,w)\mapsto Q(v,\overline N^i w)
\]
and the induced filtration is a $\C\text{-}{\sf PHS}$ .

Let us recall how this notion of a $\C\text{-}{\sf PHS}$  is related to the more common
version of a   $ \R\text{-}{\sf PHS}$.
Let $V$ be a finite dimensional $K$-vector space with $K\in \{ \mathbb Q, \mathbb
R\}$, $\mathsf F$ a finite decreasing filtration on $V_\C=V\otimes_K \mathbb C$ and $Q_\circ\colon
V\times V\to K$ a $(-1)^w$-symmetric perfect pairing with $w\in \Z$. We
call  the triple $(V,\mathsf F, Q)$ a {\it  polarized $K$-Hodge structure} ($
K\text{-}{\sf PHS}$) of {\it  weight $w$} if
\[
  (V_\C , \mathsf F, (v,w)\stackrel{Q}{\mapsto} \sqrt{-1}^{-w }
  Q_\circ(v,\overline w))
\]
is a $\C\text{-}{\sf PHS}$ and $  (\mathsf F^a)^\perp = \overline{\mathsf F}^{w+1-a} $
for all $a\in\Z$,
where the orthogonal complement is with respect to the hermitian pairing $Q$.

 A {\it polarized variation of $\C$-Hodge structure}
($\C\text{-}{\sf PVHS}$) on a complex manifold $X$  is given by a triple $(\mathbb L,
\mathsf F, Q)$, where $\mathbb
L$ is a complex local system on $X$, $\mathsf F\subset \mathbb L\otimes_\C \mathcal \sO_X$ is a finite
filtration by holomorphic subbundles and $Q\colon \mathbb L\times \overline{\mathbb L}\to \C$ is a
hermitian perfect pairing, where we assume that the Hodge-Riemann relations
at each point of $X$  are  satisfied and
Griffiths transversality $\nabla (\mathsf F^a) \subset \Omega^1_X(\mathsf F^{a-1})$ holds.

\begin{thm} \label{thm:ss} Let $X$ be a quasi-projective connected complex  manifold. Let $(\mathbb
  L,\mathsf F,Q)$ and $(\mathbb L', \mathsf F',Q')$ be $\C\text{-}{\sf PVHS}$. The following properties hold:
  \begin{itemize}
    \item[i)] if $\phi\colon \mathbb L\xrightarrow\sim \mathbb L'$ is an isomorphism of local systems such
      that for one point $x\in X$ the Hodge filtrations are preserved, i.e.\
      $\phi_x(\mathsf  F_x^a)= (\mathsf F')_x^a$ for all $a\in \Z$, and such that $\phi^* (Q')=Q$,  then
      $\phi$ is an isomorphism of $\C\text{-}{\sf PVHS}$;
    \item[ii)] if $\mathbb L$ is simple then  $Q$ is unique up to a factor in $\mathbb
      R_{> 0}$ and $\mathsf F$ is unique up to shift. Moreover, there are no gaps in
      $\mathsf F$, i.e.\ we cannot have $\gr^{\mathsf F}_i=0$ and  $\gr^{\mathsf
        F}_j,\gr^{\mathsf F}_k\ne 0$ for
      $j<i<k$;
      \item[iii)] there is a canonical structure of a $\C\text{-}{\sf PHS}$ on
        $H^0(X,\mathbb L)$ such that  $H^0(X,\mathbb L)\to
        \mathbb L_x$ is a polarized Hodge substructure for all $x\in X$;
   \item[iv)] there are $\C\text{-}{\sf PVHS}$ $(\mathbb L_i,\mathsf F_i,Q_i)$ for $1\le i\le s$ with
  $\mathbb L_i$  simple local systems such that the canonical map
  \[
    \bigoplus_i (\mathbb L_i ,\mathsf F_i,Q_i   )\otimes_\C   H^0(X, \mathbb L_i^\vee \otimes \mathbb
    L) \xrightarrow{\sim}  (\mathbb L,\mathsf F,Q)
  \]
  is an isomorphim in $\C\text{-}{\sf PVHS}$. Here $H^0$ is endowed with the
  $\C\text{-}{\sf PHS}$   from~{\rm iii)};
\item[v)] if $\mathbb L$ underlies a $\mathbb Z$-local system then
  the algebraic monodromy group  $\mathrm{Mo}(\mathbb L,x)^0$ is semisimple and the
  monodromy at infinity of $\mathbb L$ is quasi-unipotent.
 \end{itemize}
\end{thm}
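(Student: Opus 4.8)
\emph{The plan} is to take two classical analytic facts as the engine --- the theorem of the fixed part for an abstract polarized $\C$-variation of Hodge structure, and the semisimplicity of such a variation as a local system (with the stronger property that every sub-local-system underlies a sub-variation) --- and to deduce (i), (ii) and (iv) from them by formal arguments, while (v) is the classical theorem of Deligne together with the monodromy theorem.

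First I would establish (iii). For $X$ projective, $H^0(X,\mathbb L)=\mathbb L^{\pi_1(X)}$ is the space of global flat sections, and the assertion is precisely that the fibrewise Hodge grading of $\mathbb L_x$ restricts on the flat subspace $H^0(X,\mathbb L)\subseteq\mathbb L_x$ to a grading that is independent of $x$; equivalently, that every Hodge component of a global flat section is again flat. This is the theorem of the fixed part, due to Schmid in the abstract projective case, and it also yields that a projective $\C\text{-}\mathsf{PVHS}$ is a semisimple local system and that every sub-local-system underlies a sub-$\mathsf{PVHS}$ (the restricted polarization being nondegenerate by the Hodge--Riemann relations), so that the category of $\C\text{-}\mathsf{PVHS}$ on $X$ is semisimple. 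For $X$ quasi-projective one passes to a smooth compactification $\overline X$ with normal crossings boundary and reduces, by the one- and several-variable $\mathrm{SL}_2$-orbit theorems of Schmid and of Cattani--Kaplan--Schmid --- or by Mochizuki's theory of tame harmonic bundles, cf.\ Subsection~\ref{ss:mochi} --- to the compact situation; this again gives the semisimplicity statements and, since the Hodge components of a flat section are $L^2$-bounded near the boundary, the theorem of the fixed part. The polarization on $H^0(X,\mathbb L)$ is the restriction of $Q$.

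Granting (iii) (and the accompanying semisimplicity), the remaining parts are formal. For (i), $\phi$ is a global flat section of the $\C\text{-}\mathsf{PVHS}$ $\underline{\Hom}\big((\mathbb L,\mathsf F,Q),(\mathbb L',\mathsf F',Q')\big)$, so by (iii) it lies in a $\C\text{-}\mathsf{PHS}$ on $H^0$ with $H^0\hookrightarrow\underline{\Hom}(\mathbb L,\mathbb L')_x$ a morphism of $\mathsf{PHS}$; the hypothesis $\phi_x(\mathsf F^a_x)=(\mathsf F')^a_x$ together with $\phi^*Q'=Q$ says that $\phi$ sits in the $(0,0)$-piece at $x$, hence, being a flat section, in the $(0,0)$-piece everywhere, i.e.\ $\phi$ is an isomorphism of $\C\text{-}\mathsf{PVHS}$. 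For (iv), the semisimplicity lets one write $\mathbb L\cong\bigoplus_i\mathbb L_i\otimes M_i$ as local systems with $\mathbb L_i$ the distinct simple constituents and $M_i=H^0(X,\mathbb L_i^\vee\otimes\mathbb L)$; each $\mathbb L_i$, as a sub-local-system of $\mathbb L$, carries a $\C\text{-}\mathsf{PVHS}$ structure $(\mathbb L_i,\mathsf F_i,Q_i)$, so $\mathbb L_i^\vee\otimes\mathbb L$ is a $\C\text{-}\mathsf{PVHS}$ and $M_i$ inherits a $\C\text{-}\mathsf{PHS}$ from (iii), and one checks by the argument of (i) (and the naturality of (iii)) that the evaluation map $\bigoplus_i(\mathbb L_i,\mathsf F_i,Q_i)\otimes M_i\to(\mathbb L,\mathsf F,Q)$ is a morphism of $\C\text{-}\mathsf{PVHS}$, hence an isomorphism since it is one of local systems. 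For (ii), if $\mathbb L$ is simple and $(\mathsf F,Q),(\mathsf F',Q')$ are two $\C\text{-}\mathsf{PVHS}$ structures on it, then $H^0\big(X,\underline{\Hom}((\mathbb L,\mathsf F),(\mathbb L,\mathsf F'))\big)$ is one-dimensional by Schur, so by (iii) the flat section $\id_{\mathbb L}$ is of pure Hodge type; since it is a bijection respecting the Hodge gradings up to a fixed shift, $\mathsf F'=\mathsf F$ up to a shift, and then $Q'=\lambda Q$ for a flat scalar $\lambda$ (Schur), with $\lambda\in\R_{>0}$ forced by the Hodge--Riemann positivity, which depends only on $\mathsf F$. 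The absence of gaps is immediate from Griffiths transversality: if $\gr^{\mathsf F}_i=0$ while $\gr^{\mathsf F}_j\neq 0\neq\gr^{\mathsf F}_k$ for some $j<i<k$, then $\mathsf F^i=\mathsf F^{i+1}$, whence $\nabla(\mathsf F^{i+1})\subseteq\Omega^1_X\otimes\mathsf F^i=\Omega^1_X\otimes\mathsf F^{i+1}$, so $\mathsf F^{i+1}$ is a $\nabla$-stable holomorphic subbundle of $\mathbb L\otimes\sO_X$ which is nonzero (as $\gr^{\mathsf F}_k\neq 0$, $k>i$) and proper (as $\gr^{\mathsf F}_j\neq 0$, $j<i$), i.e.\ a nonzero proper sub-local-system of $\mathbb L$, contradicting simplicity.

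Finally, (v) is the classical theorem of Deligne; I would invoke \cite[Sec.~4.2]{Del71} for the semisimplicity of $\mathrm{Mo}(\mathbb L,x)^0$ --- one knows $\mathbb L$ is a semisimple local system, so its Zariski closure is reductive, and its connected central torus is killed using that a rank-one polarized $\Q\text{-}\mathsf{PVHS}$ has monodromy in $\{\pm1\}$ (this is where the $\Z$-, or at least a $\Q$-, structure is essential), together with the kind of reasoning in the proof of Lemma~\ref{lem:semi-simplealg} --- and Borel's monodromy theorem (see also Schmid's appendix) for the quasi-unipotence of the monodromy at infinity: the local monodromy $T$ around a boundary component has eigenvalues of absolute value $1$ by the existence of a limiting mixed Hodge structure, and since $T\in\GL$ of an integral lattice these eigenvalues are algebraic integers all of whose conjugates lie on the unit circle, hence roots of unity by Kronecker's theorem. \emph{The main obstacle}, as always in this circle of ideas, is the uniform passage from the projective to the merely quasi-projective setting: every part of the theorem rests on the controlled asymptotic behaviour of a $\C\text{-}\mathsf{PVHS}$ near the boundary divisor, which is the content of the hard analytic theorems of Schmid, Cattani--Kaplan--Schmid and Mochizuki, and the work is to cite and combine these correctly rather than to reprove them.
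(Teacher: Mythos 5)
The paper offers no proof of this theorem beyond the citation to Deligne \cite{Del87}, so there is no argument of the authors' to compare yours against step by step; your sketch is a correct reconstruction of the classical proofs, resting on exactly the inputs the literature uses (Schmid's theorem of the fixed part and its extension to the quasi-projective case via the nilpotent and $\mathrm{SL}_2$-orbit theorems --- for $\C$-coefficients without quasi-unipotence see \cite{SS22}, cf.\ Remark~\ref{varhodgermkqu} --- Deligne's semisimplicity and isotypic decomposition \cite{Del87}, \cite[Sec.~4.2]{Del71} for the semisimplicity in (v), and Borel--Kronecker for quasi-unipotence). One clause is stated too strongly: it is \emph{not} true that every sub-local-system of $\mathbb L$ underlies a sub-$\C\text{-}\mathsf{PVHS}$ of $(\mathbb L,\mathsf F,Q)$ --- in an isotypic piece $\mathbb L_i\otimes W_i$ a generic line of $W_i$ is not a Hodge substructure --- so in your proof of (iv) the existence of structures $(\mathbb L_i,\mathsf F_i,Q_i)$ compatible with the ambient one is not a formal consequence of semisimplicity of the local system but is precisely the content of \cite[1.11--1.13]{Del87}; with that reference supplied in place of the too-strong claim, the rest of your deductions of (i), (ii) and (iv) from (iii) are sound.
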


The theorem is classical~\cite{Del87}.

\begin{prop}\label{prop:qfinhodgevar}
Let $f\colon Y\to X$ be either a finite \'etale covering or a Zariski open  embedding (both with
 dense image) of quasi-projective complex manifolds. Let $\mathbb  L$ be a $\C$-local
system on $X$. Then $\mathbb L$  underlies a $\C\text{-}{\sf PVHS}$ if and only if
$f^*\mathbb L$ underlies a $\C\text{-}{\sf PVHS}$.
\end{prop}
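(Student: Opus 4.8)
The plan is to treat the two directions and the two cases ($f$ a finite \'etale covering, $f$ a dense Zariski open immersion) separately. The implication ``$\mathbb L$ underlies a $\C\text{-}{\sf PVHS}$ $\Rightarrow$ $f^*\mathbb L$ underlies one'' is immediate in both cases: $f^*\sF$ is again a filtration by holomorphic subbundles, and the Hodge--Riemann relations and Griffiths transversality are pointwise, resp.\ local, conditions preserved by the local biholomorphism (covering case) or by the open immersion. So the content is the converse, and I would argue it as follows.

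Suppose first that $f$ is a finite unramified covering and that $f^*\mathbb L$ underlies a $\C\text{-}{\sf PVHS}$. Since $f$ is finite \'etale, the direct image of a $\C\text{-}{\sf PVHS}$ on $Y$ is again a $\C\text{-}{\sf PVHS}$ on $X$: over a small $U\subset X$ with $f^{-1}(U)=\bigsqcup_i U_i$, $U_i\xrightarrow{\sim}U$, the pushed-forward data is the sheetwise direct sum of biholomorphic copies of the data on $Y$, so all defining conditions reduce to those on $Y$. By the projection formula $f_*f^*\mathbb L\cong\mathbb L\otimes_\C f_*\underline{\C}$, and the (normalized) transfer splits $\underline{\C}_X$ off $f_*\underline{\C}$; hence $\mathbb L$ is a direct summand of $f_*f^*\mathbb L$. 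By Theorem~\ref{thm:ss}(iv) a local system underlying a $\C\text{-}{\sf PVHS}$ is a direct sum of simple local systems each underlying a $\C\text{-}{\sf PVHS}$, so every direct summand of such a local system underlies a $\C\text{-}{\sf PVHS}$; applying this to $f_*f^*\mathbb L$ gives that $\mathbb L$ underlies a $\C\text{-}{\sf PVHS}$. This settles the covering case.

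Now let $f=j\colon Y\hookrightarrow X$ be a dense Zariski open immersion, so $\pi_1(Y)\to\pi_1(X)$ is surjective and therefore $j^*$ is fully faithful from local systems on $X$ to local systems on $Y$; consequently $\mathbb L$ is semisimple iff $\mathbb L|_Y$ is, the simple constituents $\mathbb N$ of $\mathbb L$ correspond bijectively to the simple constituents $\mathbb N|_Y$ of $\mathbb L|_Y$, and $j^*$ is injective on isomorphism classes. Assume $\mathbb L|_Y$ underlies a $\C\text{-}{\sf PVHS}$; by Theorem~\ref{thm:ss}(iv) it is semisimple, hence $\mathbb L$ is semisimple, and each summand $\mathbb N|_Y$ again underlies a $\C\text{-}{\sf PVHS}$. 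Since a direct sum of $\C\text{-}{\sf PVHS}$ is one, it suffices to show: if $\mathbb N$ is a simple local system on $X$ with $\mathbb N|_Y$ underlying a $\C\text{-}{\sf PVHS}$, then $\mathbb N$ itself underlies one.

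For this I would use Mochizuki's non-abelian Hodge correspondence (Subsection~\ref{ss:mochi}): over a quasi-projective manifold, a simple $\C$-local system underlies a $\C\text{-}{\sf PVHS}$ if and only if it is a fixed point of the $\C^\times$-action on isomorphism classes of semisimple $\C$-local systems. This action is functorial, hence commutes with $j^*$; so $j^*(t\cdot\mathbb N)=t\cdot(\mathbb N|_Y)\cong\mathbb N|_Y=j^*\mathbb N$ for every $t\in\C^\times$, because $\mathbb N|_Y$ is $\C^\times$-fixed. As $j^*$ is injective on isomorphism classes, $t\cdot\mathbb N\cong\mathbb N$ for all $t$, i.e.\ $\mathbb N$ is $\C^\times$-fixed, and therefore underlies a $\C\text{-}{\sf PVHS}$ by Mochizuki again. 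The one genuinely nontrivial ingredient is this use of the non-abelian Hodge correspondence in the quasi-projective setting; all the rest is formal. (Note that one should not expect a \emph{given} $\C\text{-}{\sf PVHS}$ on $\mathbb L|_Y$ to extend to $X$ --- for instance a variation on $\C^2\setminus\{0\}$ pulled back along $\C^2\setminus\{0\}\to\mathbb P^1$ does not extend across the origin --- so the freedom to modify the Hodge structure is essential.)
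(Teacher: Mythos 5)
Your argument is correct in substance, and the covering case coincides with the paper's: both reduce, via Theorem~\ref{thm:ss}(iv), to the fact that a summand (or sub-local system) of something underlying a $\C\text{-}{\sf PVHS}$ underlies one, applied to $\mathbb L\subset f_*f^*\mathbb L$. For the open immersion you take a genuinely different route. The paper invokes the nilpotent orbit theorem: since $j^*\mathbb L$ extends to $X$ as a local system, the local monodromy around the divisorial part of $X\setminus Y$ is trivial, so the Hodge filtration of the \emph{given} $\C\text{-}{\sf PVHS}$ on $Y$ extends holomorphically across it with pure polarized fibres (and across the codimension $\ge 2$ locus for free); i.e.\ the given variation itself extends. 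Your route instead goes through Proposition~\ref{prop:varhodgecrit} and the injectivity of $j^*$ on isomorphism classes, which also works, with two caveats. First, in the quasi-projective setting Proposition~\ref{prop:varhodgecrit} uses a single non-torsion $\lambda\in S^1$, not the full $\C^\times$-flow (which changes the parabolic structure). Second, the step ``the action is functorial, hence commutes with $j^*$'' is where the actual content sits: one must check that the restriction to $Y$ of the tame purely imaginary pluri-harmonic metric on $X$ is again the tame purely imaginary pluri-harmonic metric of $\mathbb N|_Y$ (true by uniqueness, since $\theta$ is holomorphic with zero residue along the new boundary components), so that $(\mathbb N^\lambda)|_Y\cong(\mathbb N|_Y)^\lambda$; this deserves a sentence rather than the word ``functorial''. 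Finally, your closing parenthetical is mistaken: a $\C\text{-}{\sf PVHS}$ on $Y$ whose underlying local system extends to $X$ \emph{does} extend as a $\C\text{-}{\sf PVHS}$ --- that is precisely what the paper's nilpotent-orbit argument shows --- and your proposed counterexample is vacuous, since $\C^2\setminus\{0\}$ is simply connected and any polarized variation pulled back from $\P^1$ is constant, hence extends across the origin.
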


\begin{proof}
The case of a finite \'etale covering is a consequence of Theorem~\ref{thm:ss}(iv) which
implies that for $\mathbb L_1 \subset \mathbb L_2$ an  inclusion of complex local systems such
that $\mathbb L_2$ underlies a $\C\text{-}{\sf PVHS}$  then $\mathbb L_1$
underlies a $\C\text{-}{\sf PVHS}$.

The case of a
Zariski open embedding follows from the nilpotent orbit theorem~\cite[(2.1)]{CK89}.
\end{proof}

Let us recall for later reference one of the main results about {\it degeneration of pure
  $\C$-Hodge structure}. Let  $\mathbb L$ be a unipotent complex local system on  $\Delta^\times=\Delta\setminus \{0 \}$ with $\Delta= \{ z\in \C\, |\,  |
z| <1 \} $. Let $Q\colon \mathbb L \times\overline{\mathbb L}\to \C$ be a hermitian perfect
pairing. Let $\mathcal E$ be the holomorphic bundle on $\Delta$  which is the Deligne extension of the
flat bundle $\mathbb L \otimes \sO_{\Delta^\times}$ \cite[Prop.~II.5.2]{Del70}. Let
$N\colon \mathcal E_0\to \mathcal E_0$ be the residue of the flat logarithmic connection
$\nabla\colon \mathcal E\to \frac 1 z \Omega^{1}_X(\mathcal E)$ on $\Delta$.
  One can construct an induced hermitian pairing $Q_0\colon \mathcal E_0\times
\overline{\mathcal E}_0\to \C$ using a (tangential) base point.  Note that our $N$ differs
from  the one in~\cite{SS22} by a sign.

\begin{prop}\label{prop:deghodgestr}
Let $\mathsf F\subset \mathcal E|_{\Delta^\times}$ be a Griffiths transversal finite
filtration by holomorphic subbundles.  Then the following are equivalent:
\begin{itemize}
\item[1)] for $0<|z|\ll 1$,
 $(\mathcal E_z, \sF_z, Q_z$) is a pure polarized Hodge structure;
\item[2)] $\mathsf F$ extends to a filtration by holomorphic subbundles of $\mathcal E$
  and
  $(\mathcal E_0, \sF_0, N, Q_0)$ is a polarized mixed Hodge structure, i.e.\ it  is a 
  nilpotent orbit
  by~\cite[(2.3)]{CK89}.
\end{itemize}
\end{prop}

\begin{proof}
1) $\Rightarrow $ 2) is  Schmid's nilpotent orbit theorem, see~\cite[(2.1)]{CK89}. \\
2) $\Rightarrow $ 1) is shown in \cite[Thm.~2.8]{CK89} even in the several variable case.
\end{proof}

\begin{rmk}\label{rmkdeghodgunifo}
There is a version of Proposition~\ref{prop:deghodgestr} with continuous dependence on
parameters. We later need    2) $\Rightarrow$ 1) with parameters, i.e.\ if the family of holomorphic filtrations
$(\mathsf F(s))_{s\in V}$ of $\mathcal E$ depends continuously on parameters $s\in V$ with
$V\subset \mathbb R^m$ open, $0\in V$,  and for every $s\in V$ the filtration $\mathsf F_{0}(s)$ is a mixed
Hodge structure on $\mathcal E_0$ polarized by  $N$ and $Q_0$,   then
 there exists $\epsilon >0$ and a neighborhood $U\subset  V$ of $0$  such that  $ (\mathcal E(s), \mathsf
 F_z(s))$  is a pure Hodge structure polarized by $Q_z$ for $0<|z|<\epsilon $ and $s\in U$,
see the argument in \cite[Sec.~4]{CK89} which extends immediately to the parametrized situation. \end{rmk}

  Let now $X$ be a Riemann surface with compactification $\overline X$ and let
  $(\mathbb L,\mathsf F,Q)$ be a $\C\text{-}{\sf PVHS}$ on $X$. Assume that the local
  monodromies at infinity of $\mathbb L$ are unipotent. Then the anti-holomorphic Deligne
  extension $\overline{\mathcal E}$ of the complex conjugate local system
  ${\overline{\mathbb L}} $ is the complex conjugate bundle of $\mathcal E$.
  By \cite[App.B]{EV86}  ${\rm deg}( \overline{ \mathcal E)}={\rm deg}( \mathcal E)=0$.
  By the unipotence of local monodromy the formation of the Deligne extension is compatible
with the dual bundle.

By Schmid's nilpotent orbit theorem $\mathsf F^a$ extends to a holomorphic
subbundles of $\mathcal E$ and its orthogonal complement $\overline{\mathsf F}^a= (\mathsf
F^{-a})^\perp$ extends to an anti-holomorphic subbundle of $\overline{E}$, which we denote
by the same symbols.
 So
  $\overline{\mathsf F}^a$ is the complex conjugate dual of $\mathcal E/{\mathsf F}^{-a}$. We
  conclude

\begin{lem}\label{lem:degsum}
For all integers $a$ it holds
  \[
    \deg ( {\mathsf F}^a) + \deg (\overline{\mathsf F}^{-a} ) =0.
  \]
\end{lem}

\begin{rmk}\label{varhodgermkqu}
 Note
that  in  the classical literature, see~\cite{CK89}, the degeneration of Hodge structure, and
as a consequence Deligne's semisimplicity theorem, are studied for  $\R\text{-}{\sf PVHS}$  with quasi-unipotent monodromy at infinity.
Deligne remarked that the theory generalizes to arbitrary  $\C\text{-}{\sf PVHS}$~\cite[1.11]{Del87}.
Recently, this was documented in the one variable case in~\cite{SS22}.
\end{rmk}

\subsection{ Characterization of a polarized variation of $\C$-Hodge structure via
  the non-abelian Hodge correspondence} \label{ss:mochi}

Let $\overline X$ be a projective complex  manifold (with fixed ample line bundle). Let $D\subset \overline X$ be a
simple normal crossings divisor with complement $X$. We recall following Simpson~\cite[Lem.~4.1]{Sim92}
and T.~Mochizuki~\cite[10.1]{Moc06} a citerion for a simple complex local system $\mathbb L$
on $X$ to underly a $\C\text{-}{\sf PVHS}$. Let $E$ be the $\mathcal C^\infty$-bundle
underlying $\mathbb L$ with its flat connection $\nabla$.

T.~Mochizuki and Jost--Zuo construct a tame, purely imaginary pluri-harmonic
metric $h$ on $E$ which is unique up to a factor in $\mathbb R_{>0}$, see
\cite[Part~5]{Moc07} and~\cite{Moc09}. This allows us to
write $\nabla= \partial_E + \overline\partial_E + \theta +  \theta^\dagger$. Here
$\theta$ is a $(1,0)$-form with values in $\mathit{End}(E)$ with adjoint  $
\theta^\dagger $ with respect to $h$ and $\partial_E + \overline\partial_E $ is a connection compatible
with $h$. The associated Higgs bundle $(E,\overline\partial_E , \theta)$ has a canonical
parabolic structure at infinity such that $\theta $ has only simple poles and purely
imaginary eigenvalues of its residues.

For $\lambda\in S^1= \{ z\in \mathbb C\, |\, |z|=1 \} $ one considers the flat bundle $(E,\nabla^\lambda)$ with associated
local system $\mathbb L^\lambda$, where  $\nabla^\lambda = \partial_E +
\overline\partial_E + \lambda \theta +  \overline \lambda   \theta^\dagger$,
see~\cite[2.2.1]{Moc09}.

\begin{prop}\label{prop:varhodgecrit}
 Fix $\lambda \in S^1$ which is not a root of unity. Then
$\mathbb L\cong \mathbb L^\lambda$ if and only if $\mathbb L$ underlies a $\C\text{-}{\sf PVHS}$.
\end{prop}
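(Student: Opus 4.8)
The plan is to prove the equivalence $\mathbb L\cong \mathbb L^\lambda$ iff $\mathbb L$ underlies a $\C\text{-}{\sf PVHS}$ by exploiting the $\C^\times$-action (or here the $S^1$-action) on the Dolbeault/de Rham moduli spaces coming from the non-abelian Hodge correspondence, together with the fact that a simple flat bundle underlies a variation of Hodge structure precisely when the associated Higgs bundle is a fixed point of the $\C^\times$-flow. I would first recall that, for $\lambda\in\C^\times$, the bundle $(E,\nabla^\lambda)$ with $\nabla^\lambda=\partial_E+\overline\partial_E+\lambda\theta+\overline\lambda\,\theta^\dagger$ has associated Higgs bundle $(E,\overline\partial_E,\lambda\theta)$, which underlies the same harmonic bundle up to the canonical $\C^\times$-rescaling of the Higgs field; this is precisely the content of \cite[2.2.1]{Moc09}. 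Thus $\mathbb L^\lambda$ is the flat bundle whose harmonic/Higgs data is the $\lambda$-rescaling of that of $\mathbb L$.

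For the direction ``$\mathbb L$ underlies a $\C\text{-}{\sf PVHS}$ $\Rightarrow\mathbb L\cong\mathbb L^\lambda$'': a variation of Hodge structure gives the bundle $E$ a grading $E=\bigoplus_p E^p$ such that $\theta$ shifts $E^p\to E^{p-1}\otimes\Omega^1_X$ and $\overline\partial_E$ preserves the grading (this is Griffiths transversality plus the fact that the harmonic metric is, up to scalar, the Hodge metric). Define the automorphism $g_\lambda$ of $E$ acting by $\lambda^p$ on $E^p$ (with an appropriate convention so that the residues are purely imaginary and tameness is preserved). Then conjugating $\nabla$ by $g_\lambda$ one checks $g_\lambda\circ\nabla\circ g_\lambda^{-1}=\partial_E+\overline\partial_E+\lambda\theta+\overline\lambda\,\theta^\dagger=\nabla^\lambda$, using that $\overline\partial_E$ and $\partial_E$ preserve the grading, $\theta$ drops $p$ by one, and $\theta^\dagger$ raises it by one. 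Hence $g_\lambda$ is a gauge transformation identifying $\mathbb L$ with $\mathbb L^\lambda$; one must check it is compatible with the parabolic structure at infinity, which follows because the Hodge grading is compatible with the Deligne extension and the eigenvalues of the residue of $\theta$.

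For the converse ``$\mathbb L\cong\mathbb L^\lambda\Rightarrow\mathbb L$ underlies a $\C\text{-}{\sf PVHS}$'': if $\mathbb L\cong\mathbb L^\lambda$ then by uniqueness of the tame purely imaginary pluri-harmonic metric (up to $\R_{>0}$) the isomorphism is realized by a unitary-up-to-scalar gauge transformation $g$, so the harmonic bundle $(E,\overline\partial_E,\theta,h)$ is isomorphic to its $\lambda$-rescaling $(E,\overline\partial_E,\lambda\theta,h)$. Passing to the Higgs bundle side and using that $\lambda$ is not a root of unity, the fixed-point/averaging argument of Simpson \cite[Lem.~4.1]{Sim92}, extended to the tame parabolic setting by Mochizuki \cite[10.1]{Moc06}, shows that such a Higgs bundle admits a $\C^\times$-equivariant structure, i.e.\ a grading $E=\bigoplus_p E^p$ with $\theta(E^p)\subset E^{p-1}\otimes\Omega^1_X$; indeed the isomorphism $E\xrightarrow\sim E$ intertwining $\theta$ with $\lambda\theta$ can be diagonalized (its eigenspaces being the $E^p$, with eigenvalues forming a single $\langle\lambda\rangle$-orbit, hence consecutive powers of $\lambda$ since $\mathbb L$ is simple and $\lambda$ is non-torsion). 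This grading, together with the harmonic metric, yields the Hodge filtration $\mathsf F^p=\bigoplus_{q\ge p}E^q$ and the polarization $Q$ built from $h$ and the grading; Griffiths transversality and the Hodge--Riemann relations are then exactly the defining properties of a harmonic bundle, so $(\mathbb L,\mathsf F,Q)$ is a $\C\text{-}{\sf PVHS}$.

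The main obstacle I expect is the careful bookkeeping at infinity: one must verify that the gauge transformation $g_\lambda$ (resp.\ the eigenspace grading) is compatible with the canonical parabolic structure, that tameness and the purely imaginary condition on the residues are preserved under the $\lambda$-twist, and that the uniqueness statement for the pluri-harmonic metric applies in the tame case on the open variety $X$ rather than on a compact manifold — all of which rests on Mochizuki's extension \cite[10.1]{Moc06}, \cite[Part~5]{Moc07}, \cite{Moc09} of Simpson's original fixed-point argument, whose hypotheses (simplicity of $\mathbb L$, $\lambda$ not a root of unity) are precisely what make the eigenvalues of the twisting automorphism consecutive and the grading well-defined.
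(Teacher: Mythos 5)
Your proposal is correct and follows essentially the same route as the paper: reduce via the (tame, parabolic) non-abelian Hodge correspondence to an isomorphism of Higgs bundles $(E,\overline\partial_E,\theta)\cong(E,\overline\partial_E,\lambda\theta)$, then decompose $E$ into (generalized) eigenspaces of that isomorphism to obtain the grading shifted by $\theta$, invoking \cite[Lem.~4.1]{Sim92} and \cite[10.1]{Moc06} for the identification of such graded Higgs bundles with $\C\text{-}{\sf PVHS}$. The only difference is presentational: you spell out the gauge transformation $g_\lambda$ for the forward direction and the role of simplicity and of $\lambda$ not being a root of unity in making the eigenvalue string consecutive, which the paper leaves implicit.
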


\begin{proof}
The Higgs bundle of $\mathbb L^\lambda$ is $(E,\overline \partial_E ,\lambda \theta)$ with
the same parabolic structure as the Higgs bundle of $\mathbb L$. So $\mathbb L\cong
\mathbb L^\lambda$ if and only if there is an isomorphism of Higgs bundles
\[
\phi\colon (E,\overline \partial_E ,\theta) \xrightarrow\sim (E,\overline \partial_E ,\lambda \theta)
\]
compatible with
the parabolic structure. Decomposing $E$ according to generalized eigenspaces of $\phi$
induces an $h$-orthogonal bundle decomposition $E= E_1 \oplus \cdots \oplus E_r$
with $\theta(E_i)\subset\Omega^1_X( E_{i-1})$, which itself corresponds to a $\C\text{-}{\sf PVHS}$ as
in~\cite[Lem.~4.1]{Sim92}.
\end{proof}

\subsection{The Mumford-Tate group of a polarizable variation of $\Q$-Hodge structure} \label{ss:MT}
To a polarizable $\Q$-Hodge structure $(V,\mathsf F)$ of weight $w$ one   associates the   {\it Mumford-Tate
  group} ${\rm MT}(V,\mathsf F)$. It can be characterized as the smallest linear algebraic subgroup
of $\mathrm{GL}(V)$ over $\Q$ the complex points of which comprise the maps
$\phi_\lambda\colon V_\C\to V_\C$   for $\lambda \in \mathbb C$
 defined  by $\lambda^i \overline \lambda^j $ on $ \mathsf F^i\cap \overline{\mathsf
   F}^j$, where $i+j=w$. In particular, ${\rm MT}(V,\mathsf F)$ is connected.

Let now $X$ be a quasi-projective complex manifold  and let $(\mathbb L ,\mathsf F)$ be a
polarizable variation of $\Q$-Hodge structure on $X$. We also assume that $\mathbb L$
underlies a $\mathbb  Z$-local system, which however should not be necessary for most arguments in view of Remark~\ref{varhodgermkqu}.

\begin{prop} \label{prop:norm}
The Mumford-Tate group ${\rm MT}(\mathbb L_x, F_x)$ normalizes the Zariski closure    ${\rm Mo}(\mathbb L, x)$  of the
monodromy group
in $\GL(\mathbb L_x)$.
\end{prop}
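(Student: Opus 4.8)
The plan is to use the classical description of the Mumford-Tate group together with the fact that monodromy preserves the $\Q$-structure and acts by flat transport on the variation. First I would recall that, by definition, $\mathrm{MT}(\mathbb L_x, \mathsf F_x)$ is the smallest $\Q$-algebraic subgroup of $\GL(\mathbb L_x)$ whose complex points contain all the Hodge operators $\phi_\lambda$ attached to the Hodge structure on the fibre $\mathbb L_x$. Equivalently, $\mathrm{MT}(\mathbb L_x,\mathsf F_x)$ is the $\Q$-Zariski closure of the subgroup generated by the image of $\phi\colon \mathbb G_m(\C)\to \GL(\mathbb L_x)(\C)$; call this subgroup (or its closure) $M_x$.

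The key observation is that parallel transport along any loop $\gamma\in\pi_1(X,x)$, being an automorphism of the local system $\mathbb L$ that is defined over $\Q$ and respects the variation of Hodge structure in a parallel-transported sense, conjugates the Hodge operators at $x$ to the Hodge operators at $x$ of the transported flag, which is precisely the flag $\mathsf F_x$ again only up to the monodromy action. More precisely: for $g\in\mathrm{Mo}(\mathbb L,x)$ coming from a loop $\gamma$, parallel transport identifies $(\mathbb L_x,\mathsf F_x)$ with $(\mathbb L_x, g\cdot\mathsf F_x)$, and the Hodge operator of the latter is $g\,\phi_\lambda\,g^{-1}$. But the underlying $\Q$-local system is unchanged, so $g\,\phi_\lambda\,g^{-1}$ is again a Hodge operator of \emph{some} $\Q$-PVHS on $X$ with the same monodromy — however for the normalizer statement we only need that $g\,\mathrm{MT}(\mathbb L_x,\mathsf F_x)\,g^{-1}$ is again a $\Q$-algebraic subgroup of $\GL(\mathbb L_x)$ generated by Hodge operators, hence equals $\mathrm{MT}(\mathbb L_x, g\cdot\mathsf F_x)$. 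The cleanest route: work over a point $\tilde x$ in the universal cover, where the VHS becomes a period map $\tilde X\to D$ into a period domain, and $\pi_1(X,x)$ acts equivariantly; then the generic Mumford-Tate group of the VHS on $\tilde X$ is normalized by the monodromy by a standard argument (André, Deligne). Since the generic Mumford-Tate group is contained in $\mathrm{MT}(\mathbb L_x,\mathsf F_x)$ with equality on a dense set, and both are connected $\Q$-groups, one deduces that $\mathrm{MT}(\mathbb L_x,\mathsf F_x)$ contains $\mathrm{Mo}(\mathbb L,x)^0$ (the algebraic monodromy group is a normal subgroup of the derived group of the generic Mumford-Tate group, by Deligne's theorem), and in particular the conjugation action of $\mathrm{Mo}(\mathbb L,x)$ on $\mathrm{Mo}(\mathbb L,x)$ makes sense; combining, $\mathrm{MT}(\mathbb L_x,\mathsf F_x)$ normalizes $\mathrm{Mo}(\mathbb L,x)$.

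Concretely I would argue as follows. Let $M=\mathrm{MT}(\mathbb L_x,\mathsf F_x)$ and $G=\mathrm{Mo}(\mathbb L,x)$, both $\Q$-algebraic subgroups of $\GL(\mathbb L_x)$. It is a theorem of Deligne (already cited here as \cite[Sec.~4.2]{Del71}, cf.\ Theorem~\ref{thm:ss}(v)) that $G^0$ is a normal subgroup of the derived group $M^{\mathrm{der}}$ of the generic Mumford-Tate group along the variation; since $M$ is connected, $G^0\trianglelefteq M$. It remains to see that $M$ normalizes all of $G$, not just $G^0$; for this note $G/G^0$ is finite, $G^0$ is normalized by $M$, and conjugation by $M$ permutes the finitely many components of $G$ — but $M$ is connected, so this permutation action is trivial, i.e.\ $M$ normalizes each component of $G$, hence $M\subseteq N_{\GL(\mathbb L_x)}(G)$.

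The main obstacle is establishing the inclusion $G^0\trianglelefteq M$, i.e.\ making precise the relation between the pointwise Mumford-Tate group $\mathrm{MT}(\mathbb L_x,\mathsf F_x)$, the generic Mumford-Tate group of the variation, and the algebraic monodromy group. I would handle this by: (i) passing to the Hodge-generic locus of $(\mathbb L,\mathsf F)$ — by André–Deligne the function $x\mapsto\mathrm{MT}(\mathbb L_x,\mathsf F_x)$ is constant (up to conjugacy) on a dense open subset $U\subseteq X$ and drops on a countable union of proper analytic subsets, so in particular $\mathrm{MT}(\mathbb L_x,\mathsf F_x)$ contains the generic Mumford-Tate group for every $x$; (ii) quoting Deligne's result that the algebraic monodromy group is a normal subgroup of the derived subgroup of the generic Mumford-Tate group (this is exactly the content invoked to get semisimplicity in Theorem~\ref{thm:ss}(v)); and (iii) concluding $G^0\trianglelefteq M^{\mathrm{gen},\mathrm{der}}\subseteq M$ using that the derived subgroup is characteristic. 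The only subtlety over general $x$ (not Hodge-generic) is that $M=\mathrm{MT}(\mathbb L_x,\mathsf F_x)$ may be strictly larger than the generic MT group, but this only helps: a larger connected group still normalizes $G^0$ provided $G^0$ is normal in the smaller one and the larger one still normalizes the smaller — which holds since for $x$ Hodge-generic the two MT groups agree and the monodromy argument shows $g M_{\mathrm{gen}} g^{-1}=M_{\mathrm{gen}}$ for $g\in G$, while for arbitrary $x$ one can connect to a Hodge-generic point by parallel transport along a path, transferring the normalization statement since $G$ and the property of normalizing $G^0$ are transport-invariant.
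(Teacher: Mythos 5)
Your overall strategy --- reduce to the generic Mumford--Tate group and invoke the Deligne--Andr\'e normality theorem --- is a legitimate alternative route, but as written it rests on a reversed inclusion. For a polarizable variation the pointwise group $M=\mathrm{MT}(\mathbb L_x,\mathsf F_x)$ is \emph{contained in} the generic Mumford--Tate group $M_{\mathrm{gen}}$ for \emph{every} $x$ (it fixes in particular all flat Hodge tensors, which cut out $M_{\mathrm{gen}}$), with equality exactly off the Hodge-exceptional locus; at a special point it can be strictly \emph{smaller} (at a CM point of a modular family $M$ is a torus while $\mathrm{Mo}(\mathbb L,x)^0=\mathrm{SL}_2$). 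So your claims that ``$\mathrm{MT}(\mathbb L_x,\mathsf F_x)$ contains the generic Mumford--Tate group for every $x$'' and that $M$ ``may be strictly larger'' are false, and the chain $G^0\trianglelefteq M^{\mathrm{der}}\subseteq M$ breaks down: at a Hodge-special point $G^0$ need not even be contained in $M$, so ``$G^0\trianglelefteq M$'' is not the right assertion --- the proposition claims $M\subseteq N_{\GL(\mathbb L_x)}(G)$, not $G\trianglelefteq M$. Ironically, the correct inclusion $M\subseteq M_{\mathrm{gen}}$ is what rescues the strategy: if $M_{\mathrm{gen}}$ normalizes $G$, so does every subgroup of it. A second gap is the passage from $G^0$ to $G$: saying that ``conjugation by $M$ permutes the finitely many components of $G$'' already presupposes that conjugation by $M$ preserves $G$; knowing only that $M$ normalizes $G^0$, an element $m\in M$ sends $G$ to some group $mGm^{-1}\supseteq G^0$ which need not equal $G$, so the connectedness argument is circular.

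The paper avoids both issues by a Tannakian argument: $\mathrm{Mo}(\mathbb L,x)$ (with all its components) is a \emph{normal} subgroup of the Tannaka group $\mathrm{Tann}((\mathbb L,\mathsf F),x)$ of the variation, because forgetting the Hodge filtration is a quotient functor of Tannakian categories, normality being certified by the theorem of the fixed part via \cite{EHS07} and \cite{dAE22}; and $\mathrm{MT}(\mathbb L_x,\mathsf F_x)\subseteq \mathrm{Tann}((\mathbb L,\mathsf F),x)$ simply because restriction to the fibre at $x$ is a tensor functor. This gives normalization of the full Zariski closure by the pointwise Mumford--Tate group at an arbitrary, possibly special, point in one step --- exactly the strengthening over \cite{And92} that the paper wants. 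To keep your route you would need to (i) establish $M\subseteq M_{\mathrm{gen}}$ with the correct orientation, and (ii) upgrade Deligne--Andr\'e from ``$\mathrm{Mo}^0$ is normal in $M_{\mathrm{gen}}$'' to ``$M_{\mathrm{gen}}$ normalizes all of $\mathrm{Mo}$'', e.g.\ via the characterization of $\mathrm{Mo}$ by its invariant flat tensors together with the fixed part theorem --- at which point you are essentially reproducing the Tannakian proof.
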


A slightly weaker result has been observed in~\cite[Sec.~5]{And92}. We argue in terms of
Tannaka groups.

\begin{rmk}\label{rmk:tannakagr}
 The Mumford-Tate group ${\rm MT}(V,\mathsf F)$ and the group ${\rm
  Mo}(\mathbb L, x)$ are Tannaka groups in view of the following observation.
Let $V$ be a $\mathbb Q$-vector space and $G\subset \mathrm{GL}(V_\C)$ a abstract subgroup.
There is an associated Tannaka category $\mathbf  T$. Its  objects  are all subquotients of $V^{\otimes
  n}\otimes (V^\vee)^{\otimes m}$ preserved over $\C$ by the $G$-action and its
morphisms are $\Q$-linear maps compatible with the $G$-action over $\C$. Then the Tannaka
group $\mathrm{Tann}(\mathbf T )$ with respect to the obvious fiber functor over $\Q$ is the same as the smallest linear
algebraic subgroup of $\mathrm{GL}(V)$ over $\Q$ such that its complexification contains $G$.
\end{rmk}

\begin{proof}[Proof of Proposition~\ref{prop:norm}]
  Recall that polarizable variations of mixed $\Q$-Hodge structure form a Tannaka category
  for which every point $x\in X$ induces a fiber functor. We think of any polarizable pure
  variation of $\Q$-Hodge structure as an object of this Tannaka category.
 We denote by  ${\rm Tann}((\mathbb L, {\sf F}), x) \subset \GL(\mathbb L_x)$  its Tannaka
 group with respect to the fiber functor at $x$.
  Note that ${\rm Mo}(\mathbb L, x)$ is the Tannaka group of $\mathbb L$ in the Tannaka  category of
 $\Q$-local systems, and that
   by Remark~\ref{rmk:tannakagr}
    the Tannaka
  group of a polarizable $\Q\text{-}$Hodge structure
   is its Mumford-Tate group.
 The inclusion of Tannaka groups
 \ga{}{   {\rm Mo}(\mathbb L, x)\subset  {\rm Tann}(({\mathbb{ L}}, {\sf F}), x )    \notag  }
  which corresponds to the functor of Tannaka categories
  defined by forgetting the Hodge filtration
  is normal by \cite[Cor.~1.2]{dAE22} and \cite[Thm.~A.1]{EHS07}.
The restriction to $x$ of a variation of Hodge structure   induces an inclusion of Tannaka groups
 \ga{5.1}{ {\rm MT}(\mathbb L_x, {\sf F}_x) \subset {\rm Tann}((\mathbb L, {\sf F}),x)  .}   This finishes the proof.

 \end{proof}

We define the {\em Mumford-Tate group  of the polarizable variation of  $ \Q$-Hodge structure}
$(\mathbb L,\mathsf F)$ at $x\in X$ as the product
\ga{}{ {\rm MT}((\mathbb L, {\sf F}),x) =  {\rm Mo}(\mathbb L,x)^0 \cdot  {\rm MT}(\mathbb L_x, {\sf F}_x) \subset \GL(\mathbb L_x),\notag}
which is a connected subgroup as by  Proposition~\ref{prop:norm} the second factor
normalizes the first factor.

For two polarizable variations of
$\Q$-Hodge structure $(\mathbb L_1 , \mathsf F_1)$ and $(\mathbb L_2 , \mathsf F_2)$   a
$\Q$-linear map $\psi\colon\mathbb L_{1,x}\to \mathbb L_{2,x}$ induces a morphism of variations of
Hodge structure if and only if $\psi$ is a morphism of Hodge structures at $x$ and if $\psi$ commutes with
the action of $\pi_1(X,x)$, see Theorem~\ref{thm:ss}(i).  So by Remark~\ref{rmk:tannakagr}
we deduce  that
 ${\rm MT}((\mathbb L, {\sf F}),x) $ is the
neutral component of
\[
  \mathrm{Tann}((\mathbb L,\mathsf F),x) =  {\rm Mo}(\mathbb L,x) \cdot   {\rm MT}(\mathbb L_x, {\sf F}_x) \subset \GL(\mathbb L_x).
\]
As an isomorphism of two fiber functors induces an isomorphism of Tannaka groups this shows that the isomorphism $\GL(\mathbb L_x) \cong \GL(\mathbb L_y)$, induced by a
path between $x,y\in X$, gives us an isomorphism  $ {\rm MT}((\mathbb L,
{\sf F}),x) \cong  {\rm MT}((\mathbb L, {\sf F}),y)$. In this sense {\it the Mumford-Tate
groups at different points of $X$
of a variation of Hodge structure on $X$ form a local system}, i.e.\  up to isomorphism  it
is independent of the choice of the base point.

\begin{rmk}\label{rmk:hodgegen}
 Assume  that $\mathbb L$ underlies a $\mathbb Z$-local system. The set $\Sigma$ of points $x\in X$
 with
${\rm MT}(\mathbb L_x, {\sf F}_x) \ne  {\rm MT}((\mathbb L, {\sf F}),x)$,
or equivalently with $\mathrm{Mo}(\mathbb L,x)^0\not\subset{\rm MT}(\mathbb L_x, {\sf F}_x)$,
is called the  {\it Hodge-exceptional locus}. Its complement is called the {\it
  Hodge-generic locus}. By~\cite{CDK95} and~\cite[Prop.~7.5]{Del72},   $\Sigma$ is a countable union of proper closed algebraic
subsets of $X$;  for an o-minimal proof see~\cite{BKT20}.
\end{rmk}

\subsection{Reminder on Simpson's non-abelian  Hodge  locus}  \label{sec:NL}

In this section we formulate a generalization of Simpson's results in
\cite[Sec.~12]{Sim97}.
In contrast to Simpson we formulate the result  only  ``on the Betti side'' of the non-abelian Hodge correspondence.

Let $f \colon X\to S$ be a good holomorphic map.
As recalled in Subsection~\ref{sec:Artin},
$f$ is a topological  fiber bundle, so  the sheaf of  pointed sets
$ R^1f_* \GL_r(\C) $
is a local system on $S$, with fiber $H^1(X_s, \GL_r(\C))$, the set of isomorphism classes
of rank $r$ complex local systems on $X_s$.  Its associated \'etale space  is a covering $\epsilon: T\to S$
(\cite[I.86~Prop.~9]{Bou16}). We endow $T$ with the usual complex manifold structure. As a
topological space $T$ can be constructed as
\ga{}{T=\big( \tilde{ S} \times H^1(X_{s_0}, \GL_r(\C))\big)/\pi_1(S,s_0) \notag}
where $\tilde S\to S$ is a universal covering and
the $\pi_1(S,s_0)$-action is diagonal, for a fixed base point $s_0\in S$.

 \medskip

  We write $[\mathbb L_s]\in H^1(X_s, \GL_r(\C)) $ for the isomorphism class of a local
  system $\mathbb L_s$ on $X_s$.
 Simpson defines the {\it non-abelian Hodge locus} or {\it Noether-Lefschetz locus} $\mathrm{NL}={\rm NL}(f,r)$     of rank $r$  as the  subset of $T$ consisting of those $[\mathbb L_s]$ such that
 $\mathbb L_s$ underlies a $(\Z) \C\text{-}{\sf PVHS}$ on $X_s$.

Simpson \cite[Thm.~12.1]{Sim97} proved the following theorem for $f$ proper. The extension to
non-proper $f$ is explained in Appendix~\ref{app:nonabhodge}.

\begin{thm}  \label{thm:SimpNL}
  For a good holomorphic map $f\colon X\to S$ the non-abelian Hodge locus
  ${\rm NL}(f,r)$ is a closed analytic subset of $T$ which is finite over $S$.
\end{thm}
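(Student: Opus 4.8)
The plan is to reduce the statement to Simpson's original theorem in the proper case \cite[Thm.~12.1]{Sim97} by compactifying the fibres of $f$ while keeping track of the local system data on the divisor at infinity. Concretely, write $f = \overline f|_X$ with $\overline f\colon \overline X\to S$ a projective submersion and $D\subset \overline X$ a relative simple normal crossings divisor over $S$. The key point is that, by the theory of degenerations of $\C\text{-}{\sf PVHS}$ recalled in Subsection~\ref{subsec:basichodgevar} (Proposition~\ref{prop:deghodgestr} and Remark~\ref{rmkdeghodgunifo}), the property ``$\mathbb L_s$ underlies a $(\Z)\C\text{-}{\sf PVHS}$'' is detected by the behaviour of the Deligne extension along $D_s$: a local system on $X_s$ underlying a polarized variation of Hodge structure has quasi-unipotent monodromy at infinity (Theorem~\ref{thm:ss}(v)), and the Deligne canonical extension to $\overline X_s$ carries a natural parabolic/filtered structure. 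This lets one organize $T = \bigsqcup_r \epsilon^{-1}$ into strata indexed by the conjugacy classes of local monodromies around the branches of $D$, each of which behaves well in families since $D$ is a relative SNC divisor over $S$.

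First I would set up the relative Betti moduli space correctly: the étale space $\epsilon\colon T\to S$ of $R^1f_*\GL_r(\C)$ is a covering of $S$ (it is even a disjoint union of covering spaces of $S$, one for each connected component of $H^1(X_{s_0},\GL_r(\C))$ in the representation-variety sense — but the relevant point is only that it is étale over $S$). Then I would invoke the non-abelian Hodge correspondence on the fibres, in T.~Mochizuki's tame version (Subsection~\ref{ss:mochi}): for a simple local system $\mathbb L_s$ on $X_s$ with quasi-unipotent monodromy at infinity, underlying a $\C\text{-}{\sf PVHS}$ is equivalent to the existence of a fixed point of the $\C^\times$-flow on the associated tame parabolic Higgs bundle (Proposition~\ref{prop:varhodgecrit}), and this Higgs-theoretic condition is the one that Simpson's machinery in \cite[Sec.~12]{Sim97} shows to be closed-analytic and finite over the base. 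By Theorem~\ref{thm:ss}(iv) the general (semisimple) case reduces to the simple case, since $\mathbb L_s$ underlies a $\C\text{-}{\sf PVHS}$ iff each of its simple constituents does; and a non-semisimple local system never underlies a $\C\text{-}{\sf PVHS}$, so ${\rm NL}(f,r)$ is contained in the semisimple locus.

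The actual reduction to the proper case proceeds as follows. Fix a stratum of $T$ corresponding to a choice of quasi-unipotent local monodromy data along the components of $D$. Over this stratum the Deligne extensions of the $X_s$-local systems to the compactified fibres $\overline X_s$ form a family of filtered/parabolic local systems, which can be encoded — after passing to a suitable root stack or orbifold cover of $\overline X$ ramified along $D$, or equivalently by Mochizuki's parabolic formalism — as genuine local systems on a proper base. One then applies Simpson's Theorem~12.1 in \cite{Sim97} (or its extension to the orbifold setting) to this proper family to conclude that the locus where the filtration underlies a polarized VHS is closed analytic and finite over $S$; finally one checks that this locus maps isomorphically onto the corresponding stratum of ${\rm NL}(f,r)$, using Proposition~\ref{prop:deghodgestr} to match the VHS condition downstairs with the $\C\text{-}{\sf PVHS}$ condition on $X_s$ upstairs. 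Since there are only finitely many strata intersecting any given rank-$r$ locus (the set of quasi-unipotent conjugacy classes of bounded rank with fixed Jordan type is finite, and boundedness follows from rigidity of the Hodge locus), taking the union gives that ${\rm NL}(f,r)$ is closed analytic and finite over $S$.

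The main obstacle I expect is the parametrized degeneration analysis: making Proposition~\ref{prop:deghodgestr} and its uniform version Remark~\ref{rmkdeghodgunifo} do real work in a setting where the base $S$ varies and $D$ is only a \emph{relative} SNC divisor, so that the ``extension to the boundary'' is genuinely a family over $S$ and not just a one-variable degeneration. Equivalently, one must verify that Mochizuki's tame pure twistor / harmonic-metric package is sufficiently functorial in families over $S$ — a point the authors flag in the acknowledgments as something they consulted Mochizuki about. Everything else (the étale-space formalism, the reduction from semisimple to simple via Theorem~\ref{thm:ss}(iv), and the citation of Simpson in the proper case) is comparatively routine; the content is in handling the boundary $D$ uniformly over $S$ and deducing finiteness over $S$ from Simpson's proper statement stratum by stratum.
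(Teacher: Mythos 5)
Your overall skeleton (work leaf by leaf on the covering $T\to S$, reduce to simple constituents via Theorem~\ref{thm:ss}(iv), and recognize that the whole difficulty sits at the boundary divisor $D$) matches the paper's strategy, and you correctly identify the parametrized degeneration analysis as the crux. But the mechanism you propose for handling the boundary --- pass to a root stack or orbifold cover of $\overline X$ ramified along $D$ so as to turn the data into ``genuine local systems on a proper base'' and then quote Simpson's Theorem~12.1 --- does not work. A finite (orbifold) cover ramified along $D$ can only kill the finite-order semisimple part of the local monodromy at infinity; it cannot remove a nontrivial unipotent part. A local system on $X_s$ with nontrivial unipotent monodromy around a branch of $D_s$ does not extend to a local system on $\overline X_s$ or on any root stack over it, so there is no proper family to which Simpson's theorem applies. (The paper does use a ramified cover, but only to reduce from quasi-unipotent to \emph{unipotent} local monodromy, via Theorem~\ref{thm:ss}(v) and Proposition~\ref{prop:qfinhodgevar}; it cannot and does not go further.)

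What the paper does instead, and what your proposal is missing, is to rerun Simpson's argument with the Deligne extension $\mathcal E$ replacing the local system: after reducing to relative dimension one and unipotent boundary monodromy, and after fixing the Hodge numbers $h^i$ and the limit Hodge numbers $h^i_{E,j}$ on the monodromy-graded pieces at each boundary section (Theorem~\ref{thm:ss}(ii) guarantees finitely many choices), one forms the relative Grassmannian $G$ of Griffiths-transversal filtrations of $\mathcal E_s$ with these numerics and shows that the locus $G^\circ$ of genuine polarized variations is \emph{open} in $G$ (Claim~\ref{app1:claimopen}, using Proposition~\ref{prop:deghodgestr} and the parametrized Remark~\ref{rmkdeghodgunifo}: near $D$ the pure condition is controlled by the openness of the polarized limit mixed Hodge structure condition) and that $G^\circ\to S$ is \emph{proper} (Claim~\ref{app1:claimharm}, using Mochizuki's continuity of the fibrewise tame purely imaginary pluriharmonic metric, Proposition~\ref{prop:fibreharmonic}, to verify the Hodge--Riemann relations in the limit); one then concludes by Remmert's proper mapping theorem. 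Separately, the restriction to finitely many leaves of $T\to S$ is Deligne's finiteness theorem (Proposition~\ref{app1:prop_fini}), not a consequence of counting boundary conjugacy classes. So your reduction step fails exactly on the locus where the boundary monodromy is nontrivially unipotent, which is the only case in which the theorem is not already Simpson's.
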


 In fact the finiteness part of Theorem~\ref{thm:SimpNL}
is due to Deligne~\cite{Del87}.

\section{Proof of the main results}\label{sec:proof_main}

\subsection{Setting}

Let $f \colon X\to S$ be a  good algebraic morphism  satisfying {\bf Ass} from Section~\ref{sec:intro}.

In case III) the  existence of a  continuous section
 implies that $\pi_2(X,s)\to
\pi_2(S,s) $ is surjective, so by the exact sequence~\eqref{eq:longexhom} the map $ \pi_1(X_s,x)\to \pi_1(X,x)$ is injective.
From Propositions~\ref{prop:Kpi1} and~\ref{prop:hyp} it follows
that under the assumptions I) or II) the map $\pi_1(X_s,x)\to \pi_1(X,x)$ is injective as well. Furthermore, the map
of profinite completions $\widehat{\pi_1(X_s,x)}\to \widehat{ \pi_1(X,x)}$ is injective by
using Proposition~\ref{prop:ander1} in case III) and Propositions~\ref{prop:Kpi1} and~\ref{prop:hyp} in
cases I) and II).

\subsection{Proof of Theorem~\ref{introthm2} and Corollary~\ref{cor:introcanrepsurfg}}

Theorem~\ref{introthm2} is a direct consequence of Theorem~\ref{thm:maingroup} by using
the equivalence between representations of the fundamental group and local systems.

In order to deduce Corollary~\ref{cor:introcanrepsurfg} one only has to observe that semisimple
representations of surface groups  which have finite orbit under the mapping class group,
or, what is the same, finite orbit under the fundamental group of the moduli of those Riemann surfaces,  have semisimple algebraic monodromy groups.
This follows from Lemma~\ref{lem:semi-simplealg} and the fact that in rank one such  a
representation  has finite monodromy, see e.g.~\cite[Thm.~1.1]{BKMS18}.

\subsection{Proof of Theorem~\ref{thm:main}}\label{subsec:proofthmmain}

The implications 3) $\Rightarrow $ 2) $\Rightarrow $ 1), 3)  $\Rightarrow $ 4) and  4) $\Rightarrow$ 5) are clear.

We prove 1) $\Rightarrow$ 3). By Theorem~\ref{thm:ss}(v) the algebraic monodromy group of
$\mathbb L_s$ is semisimple, so we can apply Theorem~\ref{introthm2} to deduce the
existence of an algebraically isomonodromic extension $\mathbb L$ of $\mathbb L_s$, which
  can be defined over $\mathbb Z$. The polarization $Q_s$ essentially extends uniquely to
a perfect pairing $Q\colon \mathbb L \otimes \overline{\mathbb L}\to  K$. In order to show
that  the Hodge filtration $\mathsf F_s$ extends uniquely to a filtration of   $\mathbb
L\otimes_\C \mathcal O_X$ by holomorphic subbundles
$\mathsf F^i$ which make $(\mathbb L,\mathsf F,Q)$ a $K \text{-}{\sf PVHS} $ we  may assume without
loss of generality that $K=\mathbb C$. Let
\begin{equation}\label{eqmainsec:vandec}
  \mathbb L\cong \bigoplus_i \mathbb L_i \otimes_\C V_i
\end{equation}
be the canonical decomposition, where $\mathbb L_i$ are non-isomorphic simple local
systems   and where $V_i=\mathrm{Hom}(\mathbb L_i , \mathbb L)$ are complex
  vector spaces.  By Lemma~\ref{lem:algisoadmcom} we can assume that $\mathbb L_i|_{X_s}$ is simple for
all indices $i$, after replacing $S$ by a finite \'etale covering. Then by Theorem~\ref{thm:ss}(iv) there are $\C\text{-}{\sf PVHS} $
$(\mathbb L_i|_{X_s},F_{(i),s},Q_{(i),s})$ and $\C\text{-}{\sf PHS} $
$(V_i, F_{(i),V},Q_{(i),V})$ such that $(\mathbb L_s, F_s,Q_s)$ is the direct sum of their
tensor products via the isomorphism~\eqref{eqmainsec:vandec}.

With the notation of
Proposition~\ref{prop:varhodgecrit} for any $\lambda\in S^1$ we essentially obtain
that $\mathbb L_i^{\lambda}\cong \mathbb L_i $   as such an isomorphism exists after
restricting to $X_s$ and as admissible extensions are essentially unique by
Proposition~\ref{prop:uniqextadmis}. So by Proposition~\ref{prop:varhodgecrit} we see that
$\mathbb L_i$ underlies a $\C\text{-}{\sf PVHS} $ $(\mathbb L_i,F_{(i)},Q_{(i)})$ which by
Theorem~\ref{thm:ss}(ii) can be chosen to extend
$(\mathbb L_i|_{X_s},F_{(i),s},Q_{(i),s})$. We can finally endow $\mathbb L$ with the
$\C\text{-}{\sf PVHS} $ induced via the isomorphism~\eqref{eqmainsec:vandec}.

 We finally prove 5) $\Rightarrow$ 1), the  implication 4) $\Rightarrow$ 1) is
analogous and we omit the details.  Using Artin approximation~\cite{Art69} we can approximate the formal filtration
$\hat{\mathsf F}$ by a formal filtration with the same fiber over $s$ but which extends to a (relative) Griffiths
transversal filtration by subbundles $\mathsf F$ of
the isomonodromic extension
\[
  (\mathcal E_\Delta, \nabla\colon \mathcal E_\Delta \to
  \Omega^1_{X_\Delta/\Delta}( \mathcal E_\Delta  ) ) \]
over $X_\Delta$, where $\Delta\subset S$ is a suitable contractible open neighborhood of
$s$. Let $\mathbb  L_\Delta$ be the extension of the local system $\mathbb  L$ to
$X_\Delta$, let similarly $Q_\Delta$ denote the extension of the polarization.
As in Appendix~\ref{app:extreps}, see Claim~\ref{app1:claimopen}, the set of $t\in \Delta$ such that $(\mathbb
L_\Delta|_{X_t} , \mathsf
F|_{X_t}, Q_\Delta|_{X_t} )$ is a $\C\text{-}{\sf PVHS} $ is open in $\Delta$,
 note that
in  the non-proper case this observation relies on~\cite[Prop.~7.1.1]{KU09}.  By the same reasoning as in
the proof of Theorem~\ref{thm:del_fixpart}    (5) $ \Rightarrow ( 1)$, the connected component of the non-abelian
Hodge locus $\mathrm{NL}(f,r)$ containing $[\mathbb L_s ]$, see Subsection~\ref{sec:NL}, is
a finite \'etale covering of $S$. So the monodromy orbit of $[\mathbb L_s]$ is finite.

\subsection{Proof of Remark~\ref{rmk:introMT}}

By the argument in Subsection~\ref{subsec:proofthmmain} one can find an extension of variations of
Hodge structure as in Theorem~\ref{thm:main}(3) which is algebraically isomonodromic, i.e.\
such that
\[\mathrm{Mo}(\mathbb L_s,x)^0\xrightarrow\sim \mathrm{Mo}(\mathbb L,x)^0\]
is an
isomorphism. Then by Subsection~\ref{ss:MT} we deduce an isomorphism of Mumford-Tate groups of
variations
\[
  \mathrm{MT}((\mathbb L_s,F_s),x)\xrightarrow\sim \mathrm{MT}((\mathbb L,F),x).
\]
This shows that the Hodge-generic locus of  $(\mathbb L,F)$ intersected with $X_s$
coincides with the Hodge-generic locus of  $(\mathbb L_s,F_s)$. But the latter is
non-empty by Remark~\ref{rmk:hodgegen}.  This finishes the proof.

\appendix

\section{Non-abelian Hodge loci  (after Simpson)}\label{app:nonabhodge}

In this appendix we explain how to generalize Simpson's proof of
Theorem~\ref{thm:SimpNL} from the proper case to good holomorphic maps $f$.  We assume in the following that $f\colon X\to S$ is good holomorphic, where $S$ can
be an arbitrary complex manifold. As the statement is local on $S$,  we can fix
$s_0\in S$ and allow ourselves to replace $S$ by an open neighborhood of $s_0$. We
assume without loss of generality that $S$ is  contractible.

Simpson observes that Deligne's proof of his finiteness theorem~\cite{Del87} immediately
generalizes to the following proposition.

\begin{prop}\label{app1:prop_fini}
After possibly shrinking $S$ around $s_0$ there are only finitely many  leaves of the (trivial) fibration $T\to
S$ which meet $\mathrm{NL}(f,r)$.
\end{prop}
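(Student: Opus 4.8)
The plan is to run Deligne's proof of his finiteness theorem for polarized integral variations of Hodge structure~\cite{Del87} uniformly over the contractible base $S$. Since $S$ is contractible the covering $T\to S$ is a trivial fibration, its leaves are the subsets $S\times\{m\}$ for $m$ in the discrete set $H^1(X_{s_0},\GL_r(\C))$, and $S\times\{m\}$ meets $\mathrm{NL}(f,r)$ precisely when the rank $r$ local system on $X_{s_0}$ classified by $m$, transported to $X_s$ along the canonical trivialization of $R^1f_*\GL_r(\C)$ over $S$, underlies a $(\Z)\C\text{-}{\sf PVHS}$ for at least one $s\in S$. So it suffices to show that, after shrinking $S$ around $s_0$, only finitely many $m$ arise in this way, i.e.\ that the image of $\mathrm{NL}(f,r)$ in the discrete fibre is finite.

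First I would recall Deligne's argument over a single fibre $X_s$. A $(\Z)\C\text{-}{\sf PVHS}$ has semi-simple underlying local system, whose monodromy lands (up to conjugation, as the local system is defined over $\Z$) in $\GL_r(\Z)$ and is quasi-unipotent around each component of the relative simple normal crossings divisor $D_s=D\cap X_s$, with nilpotency degree bounded by $r$, by Theorem~\ref{thm:ss}(v) and Kronecker's theorem. Hence there are only finitely many conjugacy classes of local monodromy data, and, since $\overline f\colon\overline X\to S$ with its divisor $D\to S$ is topologically a product over the contractible base, this finite set of local models --- and likewise the Hodge numbers, which have no gaps on simple constituents by Theorem~\ref{thm:ss}(ii), and the Chern numbers of $(\overline X_s,D_s)$ --- are independent of $s$. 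For fixed such discrete data Deligne bounds the global monodromy: the $\C\text{-}{\sf PVHS}$ structure provides a period map of $X_s$ into an arithmetic quotient of a period domain, which extends to the compactification (Borel, Schmid), and the Hodge-norm estimates of Schmid --- in the present tame parabolic $\C\text{-}{\sf PVHS}$ setting those of Cattani--Kaplan--Schmid and Mochizuki~\cite{CK89,Moc07} --- force the monodromy matrices, evaluated on a fixed finite generating set of $\pi_1(X_{s_0})\cong\pi_1(X_s)$, to have entries bounded by a constant depending only on the discrete data and the geometry of $(\overline X_s,D_s)$; finiteness of integral matrices of bounded size (Jordan--Zassenhaus) then leaves only finitely many $m$.

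To make this relative I would work with the relative Betti moduli space, which over the contractible $S$ is a product $S\times M_B(X_{s_0},r)$, and its Dolbeault avatar with the fibrewise $\C^\times$-action; the classes underlying a $\C\text{-}{\sf PVHS}$ are the fixed locus of this action, and the assertion becomes that this fixed locus, intersected with the integral structure, meets only finitely many leaves. The content of the relativization is that all constants in the previous paragraph can be chosen independent of $s$ on a small enough neighbourhood of $s_0$: the degenerations of $X_s$ are governed by the single relative divisor $D\to S$, whose local monodromy conjugacy classes and associated nilpotent orbits vary locally constantly over the contractible base, so Schmid's asymptotics and the period-map estimates hold uniformly after shrinking $S$, and Deligne's monodromy bound then applies verbatim to the family.

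The substantive point, and the reason the proposition is only asserted after shrinking, is exactly this uniformity: one must exclude that integral variations with arbitrarily large monodromy occur on fibres $X_s$ with $s$ arbitrarily close to $s_0$. Over the contractible base the topology of $(\overline X,D)$, its Chern numbers and the conjugacy classes of the local monodromies are locally constant, which is what makes the finitely many local models, the period-map extension and the Schmid, Cattani--Kaplan--Schmid and Mochizuki norm estimates uniform in $s$; carrying this out while tracking the constants through the family, and while coping with the non-properness of $f$, is the adaptation of \cite{Del87} that Simpson records.
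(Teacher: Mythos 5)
The paper gives no proof of this proposition at all: it is stated as Simpson's observation that Deligne's finiteness theorem \cite{Del87} ``immediately generalizes'', so your reconstruction is compared against the intended citation rather than against a written argument. Your overall strategy is exactly the intended one --- identify the leaves with classes $m\in H^1(X_{s_0},\GL_r(\C))$, reduce to finiteness of the image of $\mathrm{NL}(f,r)$ in the fibre, and run Deligne's bound on the monodromy uniformly over a small contractible neighbourhood of $s_0$, the uniformity of the Hodge-norm estimates over the topologically trivial family $(\overline X,D)\to S$ being the only new input --- and you correctly isolate that uniformity as the substantive point.

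One step of your reconstruction of \cite{Del87} is off as written. Deligne does not bound the \emph{entries} of the monodromy matrices and does not invoke Jordan--Zassenhaus at this stage: the integral lattice is only given up to conjugation, so ``entries bounded'' is not a well-posed condition, and a conjugation-invariant quantity is needed. What the Hodge-norm (distance-decreasing) estimates actually bound is $|\Tr\rho(\gamma)|$ for $\gamma$ ranging over words of bounded length in a fixed generating set of $\pi_1(X_{s_0})$; these traces are rational integers, a semi-simple representation into $\GL_r(\C)$ is determined up to isomorphism by its character, and the character is determined by its values on such a finite set of words. Finitely many bounded integer values then give finitely many isomorphism classes, hence finitely many leaves. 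With that substitution your argument is the one the paper intends; the rest (quasi-unipotency and boundedness of the local monodromy data, no gaps in the Hodge numbers, local constancy of all discrete invariants over the contractible base) is consistent with Theorem~\ref{thm:ss} and with the reductions carried out later in Appendix~\ref{app:nonabhodge}.
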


In view of Proposition~\ref{app1:prop_fini}
we can  fix a  $\mathbb C$-local
system $\mathbb L$ on $X$ of rank $r$ which underlies a $\Z$-local system. Then to prove  Theorem~\ref{thm:SimpNL} it is sufficient to show that the set
of $s\in S$ such that
$\mathbb L_s$ underlies a $\C \text{-} {\sf PVHS}$ is closed analytic in $S$.

\smallskip

{\it A reduction}.

We reduce to  $f$ of relative dimension one and $\mathbb L$ unipotent along $\overline
X\setminus X$. The first condition  can be achieved by choosing a sufficiently  generic
relative hyperplane section    $Y\hookrightarrow X$ after shrinking $S$ around $s_0$. By
Bertini, $Y\to S$ is then good holomorphic and Remark~\ref{intro:rmk1} tells us that for  $\dim(X_s)>1$
\[
\mathbb L_s \text{ underlies a } \C  \text{-} {\sf PVHS}\quad  \Leftrightarrow\quad \mathbb L_s|_Y \text{ underlies a } \C  \text{-} {\sf PVHS}
\]
for any $s\in S$.  After performing this Bertini argument successively we can assume without
loss of generality that $f$ is of relative dimension one.

By Theorem~\ref{thm:ss}(v) the monodromy of $\mathbb L$ along  $\overline X\setminus X$ is
quasi-unipotent.
By pullback along a finite ramified covering of $\overline X$ we can reduce to the case in
which
this monodromy is unipotent and then conclude for the original $X$ using Proposition~\ref{prop:qfinhodgevar}.
This argument also allows us to assume that $\overline X\setminus X$
consists of images of finitely many disjoint sections of $\overline X\to S$.

\smallskip

{\it The flag variety}.

By Theorem~\ref{thm:ss}(iv) we have to show that for each simple constituent
$\mathbb M$ of $\mathbb L$ the set of $s\in S$ such that $\mathbb M_s$
underlies a  $\C \text{-} {\sf PVHS}$ is closed analytic in $S$. We will see that this
holds as a consequence of the key Proposition~\ref{prop:simprid} below.

We fix such a simple constituent $\mathbb M$ and a hermitian perfect
pairing $Q\colon \mathbb M\times \overline{\mathbb M}\to \C$, which is
unique up to a factor in $\mathbb R^\times$. Let $\mathcal E$ be the Deligne extension of
$\mathbb M\otimes_{\C} \sO_X$ to a holomorphic bundle on $\overline X$.

Grothendieck constructed a certain flag scheme, see~\cite[2.A.1]{HL10}, which for us is a morphism of
analytic spaces $\psi\colon G\to S$, where a point $\xi\in G$ corresponds to a descending
finite filtration by coherent subsheaves $\mathsf F^i_\xi$ ($i\ge 0$) of
$\mathcal E_{\psi(\xi)}$ such that $\mathcal E_{\psi(\xi)}/\mathsf F^i_\xi$ is torsion
free for all $i$ and such that Griffiths transversality
$\nabla \mathsf F^i \subset \Omega_{X_{\psi(\xi)}}^1(\mathsf F^{i-1})$ holds for all $i\in
\mathbb Z$. In order to
get rid of the ambiguity of shifting the filtration, we also assume that we
consider only such filtrations with $\mathsf F^0_\xi= \mathcal E_{\psi(\xi)}$ and
$\mathsf F^1_\xi\ne \mathcal E_{\psi(\xi)}$. Let $G^0$ be the subset of $\xi\in G$
corresponding to the filtrations by subbundles such that $\mathsf F_\xi|_{X_{{\psi(\xi)}}}$ is a
variation of Hodge structure polarized by $Q$.

\begin{prop}\label{prop:simprid} The following holds:
  \begin{itemize}
  \item[1)] $G$ is the countable disjoint union of analytic spaces which are projective
    over $S$, more precisely the  points  $\xi\in G$ such that the ranks and degrees  of all
    coherent sheaves $\gr_{\mathsf F}^i
    \mathcal E_{\psi(\xi)}$ are fixed form a clopen subset of $G$ which is  projective over $S$;
\item[2)] $G^0$ consists of finitely many connected components of $G$.
  \end{itemize}
\end{prop}

Proposition~\ref{prop:simprid} and Remmert's proper mapping theorem imply that $\psi(G^0)$
is a closed analytic subset of $S$. On the other hand by analytic
continuation of the
Hodge filtration along the Deligne extension, i.e.\ the nilpotent orbit theorem~\cite[(2.1)]{CK89},  this is precisely the above set of $s\in S$
such that $\mathbb M_s$
underlies a  $\C \text{-} {\sf PVHS}$ polarized by $Q$.

\begin{proof}[Proof of Proposition~\ref{prop:simprid}]
  Part 1) is immediate from Grothendieck's results, see~\cite[2.A.1]{HL10}.

  \smallskip

  For part 2) we have to show that there are only finitely many  collections of numbers
  which can occur as the ranks and degrees as in 1) for elements of $ G^0$. The finiteness
  of the number of ranks follows from the fact that   we assumed $ \mathsf F_\xi^0\not\subset
  \mathsf F^1_\xi\ne \mathcal E_{\psi(\xi)}$ and that
  there can be no gap in the Hodge
  filtration of a simple local system by Theorem~\ref{thm:ss}(ii), i.e.\ if the rank of
  $\gr_{\mathsf F}^i \mathcal E_{\psi(\xi)}$ vanishes for some $i>0$ then it also vanishes
  for all larger  indices $i$.   Similarly, the degrees are bounded as the associated log
  Higgs bundle is  stable \cite[Thm.5]{Sim90}  and we can therefore apply \cite[Prop.~3.2]{Nit91}
  to $L_s=\omega_{\bar X_s}(\bar X_s\setminus X_s), \ s\in S $  which  has a constant degree for all fibers $X_s$ of $X \to S$.

  To conclude the proof of part 2) we have to show that $G^0$ is both open and closed in
  $G$, which is a consequence of Claim~\ref{app1:claimopen} and Claim~\ref{app1:claimharm}.
\end{proof}

\begin{claim}\label{app1:claimopen}
  The subset $G^0$ of  $G$ is open.
\end{claim}

\begin{proof}[Proof of Claim~\ref{app1:claimopen}]
Let us consider the subset $Z\subset G \times_S \overline X$ consisting of those points $(\xi,x)$
such that either $x\in X$ and $\mathsf F_{\xi,x}$ is a Hodge filtration polarized by $Q$
or  $x\in \overline X\setminus X$ and $\mathsf F_{\xi,x}$ together with the nilpotent
monodromy operator and $Q$ is a nilpotent orbit. We have to show that $Z$ is an open
subset.  Indeed,   this implies that  $G^0$ is open as it   is then equal to the complement of the image of the closed subset $ (G \times_S \overline X)\setminus Z$
along the proper projection $G\times_S \overline X\to G$.

The subset $Z$ is open around  $G \times_S X$, because the Hodge-Riemann inequality is an
open condition. Around $G\times_S (\overline X\setminus X)$ the subset $Z$ is open as the
property of being  a nilpotent orbit, granted Griffiths transversality holds, is an open condition by
\cite[Prop.\ 7.1.1]{KU09}  and  as a Griffiths transversal filtration by subbundles, which can be approximated by a
nilpotent orbit, is a variation of Hodge structure locally by Remark~\ref{rmkdeghodgunifo}.
\end{proof}

\begin{claim}\label{app1:claimharm}
The subset $G^\circ$ of $G$ is closed.
\end{claim}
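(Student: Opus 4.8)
Since $\mathrm{Gr}\to S$ is projective and $G\subset\mathrm{Gr}$ is closed analytic, the projection $\psi\colon G\to S$ is proper. Comparing preimages of compacta one sees that $\psi^{\circ}=\psi|_{G^{\circ}}$ is proper if and only if $G^{\circ}$ is \emph{closed} in $G$. So the plan is to prove that $G^{\circ}$ is closed in $G$; combined with Claim~\ref{app1:claimopen} this exhibits $G^{\circ}$ as a union of connected components of $G$.

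Concretely one takes a sequence $(s(n),\mathsf F(n))\in G^{\circ}$ with $s(n)\to s$, uses properness of $\psi$ to pass to a subsequence with $(s(n),\mathsf F(n))\to(s,\mathsf F)\in G$, and must show $(s,\mathsf F)\in G^{\circ}$, i.e.\ that $\mathsf F$ underlies a $\C\text{-}{\sf PVHS}$ on $X_{s}$ --- necessarily polarized by $Q|_{X_{s}}$ with the prescribed Hodge numbers. As $S$ has been arranged contractible, $\overline X\to S$ is topologically trivial, so the $\mathbb M_{s(n)}$ are all identified with a fixed local system $\mathbb M_{s_{0}}$ on $X_{s_{0}}$; in particular the monodromy group, and the local monodromies around the finitely many sectional components $E$ of $\overline X\setminus X$, are independent of $n$ and agree with those attached to $\mathsf F$. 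The limiting behaviour away from $\overline X\setminus X$ is that of Simpson's proper case \cite[Thm.~12.1]{Sim97}, so the genuinely new point is the control of the limit near the components $E$ at infinity.

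Fix local coordinates transverse to the divisors at infinity so as to speak of limiting mixed Hodge structures, and identify the Deligne extensions $\mathcal E$ over a component $E$, say $\{x\}=E_{s}$ and $\{x(n)\}=E_{s(n)}$, with constant residue $N$. Since $\mathsf F(n)|_{X_{s(n)}}$ is a $\C\text{-}{\sf PVHS}$, Proposition~\ref{prop:deghodgestr}, (1)$\Rightarrow$(2), shows that $\mathsf F(n)_{x(n)}$ is a mixed Hodge structure on $\mathcal E_{x}$ polarized by $N$ and $Q_{0}$ with monodromy-graded Hodge numbers $h^{i}_{E,j}$. The key step is to show that the limit $\mathsf F_{x}=\lim_{n}\mathsf F(n)_{x(n)}$ is again such a polarized mixed Hodge structure. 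Granting this, Proposition~\ref{prop:deghodgestr}, (2)$\Rightarrow$(1), makes $\mathsf F$ a $\C\text{-}{\sf PVHS}$ on a punctured neighbourhood of $x$; covering the compact $\overline X_{s}$ by finitely many such neighbourhoods, together with the proper-case neighbourhoods away from infinity, gives that $\mathsf F$ satisfies the Hodge--Riemann relations everywhere on $X_{s}$, i.e.\ $(s,\mathsf F)\in G^{\circ}$.

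The main obstacle is precisely the key step: a limit of polarized (mixed) Hodge structures with fixed Hodge numbers need not be polarized, since positivity is an \emph{open} condition. What should rescue it is the constancy along the family of the monodromy group and of the local monodromy at infinity: this pins down $N$ and the weight filtration and constrains the limiting data to a nilpotent orbit, so that the classical structure theory of degenerations of Hodge structure (Schmid's nilpotent and $\mathrm{SL}_{2}$-orbit theorems, Cattani--Kaplan--Schmid) forces the limiting filtration to polarize the limiting mixed Hodge structure --- this is the same circle of ideas that underlies Deligne's finiteness theorem and hence Proposition~\ref{app1:prop_fini}. Alternatively, when $\mathrm{NL}\neq\emptyset$, so that $\mathbb M_{s_{0}}$ and hence every $\mathbb M_{s}$ is semisimple, one can bypass the degeneration analysis by the non-abelian Hodge criterion of Proposition~\ref{prop:varhodgecrit}: a continuous family of tame purely imaginary pluri-harmonic metrics $h_{s}$ on the $\mathbb M_{s}$ makes $s\mapsto[\mathbb M_{s}^{\lambda}]$ continuous into the Hausdorff Betti moduli space of semisimple representations while $s\mapsto[\mathbb M_{s}]$ is constant, so that $\mathrm{NL}=\{\,s:[\mathbb M_{s}^{\lambda}]=[\mathbb M_{s}]\,\}$ is closed --- at the cost of needing the existence and continuity of the family $\{h_{s}\}$.
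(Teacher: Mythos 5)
Your reduction is sound: since $\mathrm{Gr}\to S$ is proper and $G$ is closed, properness of $\psi^{\circ}$ amounts to showing that a limit $(s,\mathsf F)\in G$ of a sequence in $G^{\circ}$ again lies in $G^{\circ}$, i.e.\ that the Hodge--Riemann relations hold for $\mathsf F$ at every point of $X_{s}$. You also correctly isolate the real difficulty: positivity is an open condition, so it is not automatically inherited by the limit. But your first route does not close this gap. Constancy of the local monodromy $N$ and of the weight filtration does \emph{not} force the limiting filtration at a point of $E_{s}$ to be a \emph{polarized} mixed Hodge structure: being in the closure of the locus of polarized MHS with fixed numerical data only gives the non-strict inequalities, and the ``nilpotent orbit / $\mathrm{SL}_{2}$'' machinery you invoke presupposes the polarization of the limit rather than producing it. Moreover the degeneration results only control points near $\overline X\setminus X$; on the open part of the non-proper fibre $X_{s}$ you still face the same loss-of-positivity problem, with no compactness to fall back on. So as written the key step is asserted, not proved.

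Your second route names the correct tool --- the continuity in $s$ of the fibrewise tame, purely imaginary pluri-harmonic metric $h$ (this is exactly Proposition~\ref{prop:fibreharmonic}, i.e.\ Mochizuki's result, so it is available and not merely a ``cost'') --- but you deploy it to show that $\mathrm{NL}_{\mathbb M}$ is closed in $S$, which is weaker than the claim: it controls the image of $\psi^{\circ}$ but not whether the limiting \emph{filtration} $(s,\mathsf F)$ lies in $G^{\circ}$, and it is the properness of $\psi^{\circ}$ (fed into Remmert) that the appendix needs. The paper's argument combines your two threads at the pointwise level: for each $n$ the filtration $\mathsf F(n)$ is the Hodge filtration of a $\C\text{-}{\sf PVHS}$ polarized by $Q$, so on $X_{s(n)}$ the $Q$-orthogonal complement of $\mathsf F(n)^{i}$ coincides with its $h$-orthogonal complement, and $(-1)^{i}Q$ agrees with $h$ up to a positive factor on $\mathsf F(n)^{i}\cap(\mathsf F(n)^{i+1})^{\perp}$. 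Passing to the limit using the continuity of $h$ and of the filtrations, the same identities hold for $\mathsf F_{x}$ at every $x\in X_{s}$; since $h_{x}$ is positive definite, this yields both the direct-sum decomposition $\mathbb M_{x}=\mathsf F^{i}_{x}\oplus(\mathsf F^{i}_{x})^{\perp}$ and the positivity of $(-1)^{i}Q_{x}$ on the graded pieces. This is how positivity survives the limit: it is transported through the everywhere-positive metric $h$ rather than argued from degeneration theory. You should replace your key step by this comparison of orthogonal complements.
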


In the proof of Claim~\ref{app1:claimharm} we need the following continuity result,
Proposition~\ref{prop:fibreharmonic}, for a fiberwise
harmonic metric due to T.~Mochizuki~\cite[Prop.~4.2]{Moc09}.
We note that $\det \mathbb M$ is finite by Theorem~\ref{thm:ss}(v), so we can fix a positive flat hermitian
metric $h^{\det}$ on  $\det \mathbb M$, which is automatically pluri-harmonic, to be
concrete take $h^{\det}=\pm Q^{\det}$.

\begin{prop}\label{prop:fibreharmonic}
Let $h\colon  \mathcal E|_X \times  \overline{  \mathcal E }|_X\to  \mathbb C$ be the unique fiberwise
harmonic, tame and
purely imaginary metric with determinant $h^{\det}$. Then $h$ is continuous.
\end{prop}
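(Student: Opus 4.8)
The plan is to reduce the statement to the one-variable situation already handled and then to invoke T.~Mochizuki's continuity result cited as \cite[Prop.~4.2]{Moc09}. First I would fix the setup obtained after the reduction steps in the appendix: $f\colon X\to S$ is a good holomorphic map of relative dimension one, $S$ is contractible, $\mathbb M$ is a simple $\C$-local system on $X$ which is unipotent along $\overline X\setminus X$, and $\overline X\setminus X$ consists of finitely many disjoint sections of $\overline X\to S$. The object to control is the fibrewise tame, purely imaginary pluri-harmonic metric $h$ on $\mathbb M\otimes \mathit C^\infty_X$ with prescribed determinant $h^{\det}$. On each fibre $X_s$ existence and uniqueness of such a metric (up to the normalization of the determinant, here fixed) is the content of Jost--Zuo and T.~Mochizuki, see \cite[Part~5]{Moc07} and \cite{Moc09}, which we may invoke as recalled in Subsection~\ref{ss:mochi}; so $h$ is well-defined as a fibrewise object, and the only issue is its continuous dependence on $s\in S$ together with the point of $X_s$.

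Next I would explain why this is exactly the situation covered by \cite[Prop.~4.2]{Moc09}. Mochizuki studies tame harmonic bundles in families over a disc (or a product of a disc with a parameter space) and proves that the harmonic metric, normalized by its determinant, varies continuously with the parameter, uniformly up to the boundary divisor, once the parabolic data (the eigenvalues of the residues of $\theta$, equivalently here the unipotence of the local monodromy) are fixed. Since after our reduction the divisor at infinity is a disjoint union of sections and the monodromy around each is unipotent, the parabolic weights are all zero and locally constant in $s$; hence the hypotheses of \cite[Prop.~4.2]{Moc09} are met in a neighborhood of every point of $\overline X$, both for interior points of $X_s$ and for the points of $X_s$ lying on a section at infinity. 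Working in the local coordinates for the divisor at infinity that were fixed in the proof of Claim~\ref{app1:claimopen}, one gets continuity of $h$ on a neighborhood of each point of $\overline X_s$; by compactness of $\overline X_s$ and of $S$ locally around $s_0$, these local continuity statements patch to continuity of $h$ on $X$.

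The main obstacle I expect is not the interior estimate but the behaviour near the divisor at infinity: a priori the fibrewise harmonic metric could degenerate as $s$ varies and the geometry of $X_s$ near the puncture changes, and one must know that the normalization by $h^{\det}$ (which is flat, hence pluri-harmonic, and can be chosen to vary continuously since $\det\mathbb M$ has finite monodromy by Theorem~\ref{thm:ss}(v)) is enough to pin down $h$ uniquely and uniformly. This is precisely what is guaranteed by Mochizuki's uniform control of tame harmonic bundles, so the role of the proof is really to check that the reduction has put us in the exact framework of \cite[Prop.~4.2]{Moc09}; no new analytic input is needed beyond citing that result.
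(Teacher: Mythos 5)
Your proposal matches the paper exactly: the paper gives no independent proof of this proposition but states it as a continuity result due to T.~Mochizuki, citing \cite[Prop.~4.2]{Moc09}, which is precisely the reference you invoke. Your additional remarks on why the reduction to relative dimension one with unipotent monodromy along disjoint sections at infinity places one in the framework of Mochizuki's result are consistent with the surrounding reductions in the appendix, so the approach is the same.
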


\begin{proof}[Proof of Claim~\ref{app1:claimharm}]
  Consider a sequence $\xi(n)\in G^\circ$ converging to $\xi \in G$. In order to show that
  $\xi \in G^0$ we have to  verify two properties: 1) $\mathsf F_\xi^i$ is a subbundle of
  $\mathcal E_{\psi(\xi)}$ for all $i$, 2) the $ \mathsf F_\xi|_{X_{\psi(\xi)}}$ form a
  variation of Hodge structure polarized by $Q$.

  Consider a point $x\in X_{\psi(\xi)}$ around which 1) holds; note that a priori there might be
  finitely many points for which it fails. The orthogonal
  complement $(\mathsf F^i_{\xi,x})^\perp$ of $\mathsf F^i_{\xi,x}$ in $\mathbb M_x$ with respect to
  $Q_x$ and with respect to the fiberwise harmonic metric $h_x$ from
  Proposition~\ref{prop:fibreharmonic} coincide by continuity since they coincide on
  $\mathcal E_{\psi(\xi(n))}^i$ for all $n$ by the description of the metric $h$ as a Hodge metric in terms of $Q$
  on the fiber of $X$ over $\psi(\xi(n))$. As $h$ is positive definite we deduce that
  $\mathbb M_x = \mathsf F^i_{\xi,x} \oplus (\mathsf F^i_{\xi,x})^\perp$. In other words $Q_x$
  restricted to $\mathsf F^i_{\xi,x}$ is a perfect pairing. For the same reason $(-1)^i Q_x$
  restricted to $\mathsf F^i_{\xi,x} \cap (\mathsf F^{i+1}_{\xi,x})^\perp$ is positive definite as by
  continuity it agrees with $h_x$ there. So around $x$ we get a
  variation of Hodge structure $\mathsf F_{\xi}$.

  In order to conclude we have
  to show that the torsion vanishes in the coherent sheaf $\mathcal E_{\psi(\xi)}/\mathsf
  F^i_{\xi,\psi(\xi)}$ for any $i$. Let $\overline \xi  (n)$ correspond to the orthogonal Hodge
  filtration $\overline{\mathsf F}^{a}= ({\mathsf F}^{-a})^\perp$ on $X_{\psi(\xi(n))}$
  continuously extended to the anti-holomorphic Deligne extension $\overline{\mathcal E}_{\psi(\xi(n))}$
  on $\overline X_{\psi(\xi(n))}$. By the properness of $G$ over $S$,
  Proposition~\ref{prop:simprid}~1), we can replace $(\xi(n))_n$ by a subsequence such
  that $\overline \xi  (n)$ converges to some $\overline{\xi}$ in the anti-holomorphic version of $G$.
By the continuity of the degree on $G$ and by Lemma~\ref{lem:degsum} we obtain
  \[
    \deg( \mathsf F^i_{\xi,\psi(\xi)}) + \deg (  \mathsf{F}^{-i}_{\overline{\xi},\psi(\xi)}) = 0
  \]
for all $i$ as this holds for the Hodge filtrations corresponding to $\xi(n)$ for all $n$.
  By  Schmid's nilpotent orbit theorem, see~\cite[(2.1)]{CK89}, the
  saturations $\tilde{\mathsf F}_{\xi,\psi(\xi)}$ of $\mathsf F_{\xi,\psi(\xi)}$ and
  $\tilde{{\mathsf  F}}_{\overline \xi,\psi(\xi)}$ of ${\mathsf F}_{\overline \xi,\psi(\xi)}$  form a
  variation of Hodge structure on $X_{\psi(\xi)}$ of weight $-1$, so
    \[
  \deg( \tilde{\mathsf F}^i_{\xi,\psi(\xi)}) + \deg ( \tilde{ {\mathsf
      F}}^{-i}_{\overline \xi,\psi(\xi)}) =0
\]
by Lemma~\ref{lem:degsum}.
Therefore,  the inequalities
\[
\deg( \mathsf F^i_{\xi,\psi(\xi)}) \le  \deg( \tilde{\mathsf F}^i_{\xi,\psi(\xi)}) ,
\quad \deg( {\mathsf F}^{-i}_{\overline \xi,\psi(\xi)}) \le  \deg( \tilde{{\mathsf
    F}}^{-i}_{\overline \xi,\psi(\xi)})
\]
have to be equalities and consequently $ \mathsf F^i_{\xi,\psi(\xi)} =  \tilde{\mathsf F}^i_{\xi,\psi(\xi)}$.
\end{proof}

\section{Extensions of  representations of profinite groups (after Simpson)}\label{app:extreps}

Let $E$ be a finite field extension of $\Q_\ell$. Let $H\subset G$ be a closed normal
subgroup of a profinite group $G$. Set $\Gamma= G/H$.   Let $\rho_H\colon H\to \mathrm{GL}_r(E)$
be a continuous representation and denote by $[\rho_H]$ the isomorphism class of $\rho_H$.
Then $\Gamma$ acts on the set of  isomorphism classes of continuous
representations $H\to \mathrm{GL}_r(E)$ by conjugation.

\begin{lem}
If $\rho_H$ is semisimple then the stabilizer group $\Gamma_{[\rho_H]}$ is closed in $\Gamma$.
\end{lem}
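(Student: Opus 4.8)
The goal is to show that for a semi-simple continuous representation $\rho_H\colon H\to\mathrm{GL}_r(E)$, the stabilizer $\Gamma_{[\rho_H]}\subset\Gamma=G/H$ of the isomorphism class $[\rho_H]$ is closed. The plan is to exhibit $\Gamma_{[\rho_H]}$ as the image in $\Gamma$ of a closed subgroup of $G$, and to do so by reinterpreting the stabilizer condition as the existence of an intertwiner. For $g\in G$, write $\rho_H^g$ for the conjugate representation $h\mapsto\rho_H(g^{-1}hg)$; then the class $gH\in\Gamma$ fixes $[\rho_H]$ precisely when $\rho_H^g\cong\rho_H$ as continuous representations of $H$.

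First I would replace $G$ by a finite-index (open) normal subgroup so that $\rho_H$ becomes a direct sum of isotypic blocks that are not permuted — more precisely, since $\Gamma$ permutes the finitely many isomorphism classes of simple constituents of the semi-simple $\rho_H$, the stabilizer of $[\rho_H]$ is contained in the stabilizer $\Gamma'$ of each such class, and $\Gamma'$ is open (it is the preimage of a point-stabilizer under the action of $\Gamma$ on a finite set, the orbit being finite because $\rho_H$ has only finitely many constituents). Being closed is insensitive to passing between $\Gamma_{[\rho_H]}$ and its intersection with the open subgroup $\Gamma'$, so I reduce to the case where every element of $\Gamma$ fixes the isomorphism class of each simple constituent. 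The key point is then: for each simple constituent $\sigma$, the conjugate $\sigma^g$ is again simple of the same dimension, and by Schur's lemma the space $\mathrm{Hom}_H(\sigma,\sigma^g)$ is either $0$ or one-dimensional, with $\sigma^g\cong\sigma$ iff it is one-dimensional, iff there is an invertible intertwiner.

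The main step is to realize $\Gamma_{[\rho_H]}$ (after the above reduction, all of $\Gamma$ if one is not careful — so I should keep general) as follows. Consider the closed subset $Z\subset G\times\mathrm{GL}_r(E)$ consisting of pairs $(g,A)$ with $A\rho_H(h)A^{-1}=\rho_H(ghg^{-1})$ for all $h\in H$; this is closed because for each fixed $h$ the condition is a continuous (in fact polynomial-then-continuous) constraint on $(g,A)$, and an intersection of closed conditions over $h\in H$ is closed. Then $\rho_H^g\cong\rho_H$ iff the fibre $Z_g$ is nonempty, so $\Gamma_{[\rho_H]}$ is the image in $\Gamma$ of the projection of $Z$ to $G$. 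The issue is that projections of closed sets need not be closed in general; here I would use compactness: $G$ is profinite hence compact, and $\mathrm{GL}_r(E)$ is not compact, but by semi-simplicity one can rescale or restrict the intertwiner to a compact set. Concretely, using that $\rho_H$ is a continuous representation of a profinite group it preserves a lattice, so after conjugating we may assume $\rho_H$ takes values in $\mathrm{GL}_r(\mathcal O_E)$; then any intertwiner between the simple constituents is, by Schur, unique up to scalar, and may be normalized to lie in $\mathrm{GL}_r(\mathcal O_E)$ (or to have a specified coordinate equal to $1$), which is compact. With $Z$ cut out inside the compact set $G\times\mathrm{GL}_r(\mathcal O_E)$, the projection $Z\to G$ is a closed map, its image is closed in $G$, and hence its image in the quotient $\Gamma=G/H$ is closed. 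The image is exactly $\Gamma_{[\rho_H]}$, which completes the proof.

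The main obstacle I anticipate is the non-compactness of $\mathrm{GL}_r(E)$: making the normalization of intertwiners into a compact-valued condition cleanly, uniformly over the finitely many isotypic components, is the one place that needs genuine care rather than formal manipulation. The reductions to simple constituents and to lattice-preserving representations are standard (and the lattice argument is exactly of the flavour used later in the proof of Theorem~\ref{thm:maingroup}), but patching the normalizations for a semi-simple $\rho_H=\bigoplus_i\sigma_i^{\oplus m_i}$ — where an intertwiner of $\rho_H^g$ with $\rho_H$ restricts to an isomorphism $\mathrm{Hom}_H(\sigma_i,\sigma_i^g)\otimes(\text{multiplicity space})$ block by block — requires one to work block by block and then observe that the admissible $A$'s form a torsor under $\prod_i\mathrm{GL}_{m_i}(E)$, of which one must pick the compact part $\prod_i\mathrm{GL}_{m_i}(\mathcal O_E)$, which is harmless since nonemptiness of the fibre is all that matters.
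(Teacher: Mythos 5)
Your overall strategy --- realizing the preimage of $\Gamma_{[\rho_H]}$ in $G$ as the projection of a closed subset of $G\times(\text{compact set})$ --- is different from the paper's (which simply cites Zock's thesis, where the set of isomorphism classes carries a uniform $\ell$-adic topology making the $\Gamma$-action continuous), but as written it has two genuine gaps. First, the preliminary reduction is based on a false claim: $\Gamma$ does not permute the isomorphism classes of the simple constituents of $\rho_H$ --- only $\Gamma_{[\rho_H]}$ does, since for $g\notin\Gamma_{[\rho_H]}$ the conjugate $\sigma_i^g$ need not occur in $\rho_H$ at all. Hence the $\Gamma$-orbit of $[\sigma_i]$ has no reason to be finite, the asserted openness of $\Gamma'$ is unjustified, and in fact it is essentially an instance of the very statement being proved. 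Second, and more seriously, the compactness normalization only works when $\rho_H$ is simple: there a nonzero intertwiner from $\rho_H$ to $\rho_H^g$ is automatically invertible, so one can drop invertibility from the definition of $Z$, rescale $A$ to lie in the compact set of matrices in $M_r(\mathcal O_E)$ of sup-norm $1$, and a limit of such intertwiners is still nonzero, hence an isomorphism. For genuinely semi-simple $\rho_H=\bigoplus_i\sigma_i^{\oplus m_i}$ this breaks down. The invertible intertwiners in a nonempty fibre $Z_g$ form a torsor under the non-compact group $\mathrm{Aut}_H(\rho_H)$, and a torsor under a non-compact group has no canonical ``compact part'': right-translating a chosen $A_0\in Z_g$ by $\prod_i\GL_{m_i}(\mathcal O_E)$ gives a compact subset of $Z_g$ that depends on $A_0$ and is not uniformly bounded in $g$, whereas what you need is a single compact $C$, independent of $g$, meeting every nonempty fibre. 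If you instead normalize by sup-norm, a sequence of invertible intertwiners for $g(n)\to g$ can degenerate to a nonzero but non-invertible intertwiner, which only shows $\Hom_H(\rho_H,\rho_H^g)\ne 0$, not $\rho_H\cong\rho_H^g$.

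There is a much shorter route that uses semi-simplicity exactly where the hypothesis places it. Since $\rho_H^g=\rho_H\circ c_g$ is again semi-simple and $E$ has characteristic zero, Brauer--Nesbitt gives $\rho_H^g\cong\rho_H$ if and only if the two characters agree. Hence the preimage of $\Gamma_{[\rho_H]}$ in $G$ equals
\[
\bigcap_{h\in H}\bigl\{\,g\in G:\ \Tr\rho_H(ghg^{-1})=\Tr\rho_H(h)\,\bigr\},
\]
an intersection of closed sets because $g\mapsto\Tr\rho_H(ghg^{-1})$ is continuous for each fixed $h$. This set is the full preimage of $\Gamma_{[\rho_H]}$ under the quotient map $G\to\Gamma$, so $\Gamma_{[\rho_H]}$ is closed for the quotient topology. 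This bypasses all normalization issues and makes transparent where semi-simplicity enters.
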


For a proof of the lemma see for example~\cite[Sect.~5]{Zoc24} in which the author endows
the space of isomorphism classes of   $\ell$-adic representations of rank $r$ with a
uniform $\ell$-adic topology on which the action of $\Gamma$ is continuous.

\smallskip

We say that a property related to $G$ holds {\it essentially} if it holds after replacing
$G$ by an open subgroup $G'$ with $H\subset G'$.

\begin{prop}\label{prop:appext}
  If $\rho_H$ is absolutely simple, $\det(\rho_H)$  is finite and   $\Gamma$ fixes $
  [\rho_H]$,  there essentially exists a unique extension of $\rho_H$ to a
continuous representation $ \rho\colon G\to  \mathrm{GL}_r(E)$ with finite determinant.
\end{prop}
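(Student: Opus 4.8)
The plan is to prove Proposition~\ref{prop:appext} by the classical Clifford-theoretic lifting argument, but carried out in the profinite and $\ell$-adic setting where the cohomological obstruction can be killed by passing to an open subgroup. First I would use the fact that $\Gamma$ fixes $[\rho_H]$: for every $g\in G$ the conjugate representation $h\mapsto \rho_H(g^{-1}hg)$ is isomorphic to $\rho_H$, so by absolute simplicity and Schur's lemma there is a matrix $A_g\in\mathrm{GL}_r(E)$, unique up to scalar, with $\rho_H(g^{-1}hg)=A_g^{-1}\rho_H(h)A_g$ for all $h\in H$. Normalizing (say by fixing $\det A_g$ up to an $r$-th root of unity, or simply working projectively) one gets a well-defined continuous homomorphism $\bar\rho\colon G\to \mathrm{PGL}_r(E)$ extending $\rho_H$ in the sense that it restricts on $H$ to the composite of $\rho_H$ with $\mathrm{GL}_r\to\mathrm{PGL}_r$; continuity here uses that the map $g\mapsto A_g$ can be chosen locally continuously, which follows from the openness of the orbit map / the $\ell$-adic topology on representation classes referenced in the cited lemma, together with the uniqueness of $A_g$ up to scalar.

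Next I would address the lifting problem: we want to lift $\bar\rho\colon G\to\mathrm{PGL}_r(E)$ through $1\to E^\times\to\mathrm{GL}_r(E)\to\mathrm{PGL}_r(E)\to1$ to a genuine continuous representation $\rho\colon G\to\mathrm{GL}_r(E)$ that restricts to $\rho_H$ on $H$. Pulling back this central extension along $\bar\rho$ gives a central extension of $G$ by $E^\times$, and hence a class in the continuous cohomology $H^2_{\mathrm{cont}}(G,E^\times)$; because $\rho_H$ itself provides a lift over $H$, this class lies in the image of the inflation map $H^2_{\mathrm{cont}}(\Gamma,E^\times)\to H^2_{\mathrm{cont}}(G,E^\times)$, i.e.\ it comes from a class $\beta\in H^2_{\mathrm{cont}}(\Gamma,E^\times)$ (one should be a little careful with the exact module and the normalization of $\rho_H$, restricting to $\Gamma$ acting through a finite quotient on the relevant cyclotomic-type part, but the point is that the obstruction is pulled back from $\Gamma$). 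The key point — and this is the step I expect to be the main obstacle — is to show that $\beta$ becomes trivial after replacing $\Gamma$ by an open subgroup, i.e.\ that $H^2_{\mathrm{cont}}$ of a profinite group with these coefficients is "killed by restriction to open subgroups" in the relevant range; since $\beta$ actually has values in the torsion part (because we have imposed finite determinant, the relevant scalars are roots of unity, so $\beta\in H^2_{\mathrm{cont}}(\Gamma,\mu_N)$ for some $N$), one reduces to showing that continuous $H^2(\Gamma,\mathbb Z/N\mathbb Z)$ is killed by passing to a suitable open subgroup. For the profinite completion $\Gamma$ of a successive extension of finitely generated free groups this is exactly the profinite analogue of Lemma~\ref{lem:finitekill}; in the generality stated here one invokes the corresponding property of $\Gamma$ (this is where the hypothesis that $\widehat H\to\widehat G$ is injective — equivalently the geometric hypotheses on $S$ — ultimately enters, via the earlier propositions).

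Granting that, I would replace $G$ by the open subgroup corresponding to an open subgroup of $\Gamma$ over which $\beta$ dies; then the central extension splits, giving a continuous lift $\rho\colon G\to\mathrm{GL}_r(E)$ with $\rho|_H\cong\rho_H$ (and after conjugating, $\rho|_H=\rho_H$). Twisting $\rho$ by a continuous character $G\to E^\times$ trivial on $H$ — which exists in abundance since $\Gamma$ is (virtually) of the right type — I can adjust $\det\rho$ to make it finite: $\det\rho|_H=\det\rho_H$ is finite by hypothesis, so $\det\rho$ is finite on $H$, and its class in $\mathrm{Hom}_{\mathrm{cont}}(\Gamma,E^\times)$ can be killed on an open subgroup, arranging $\det\rho$ to have finite order after one more passage to an open subgroup. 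Finally, uniqueness up to the essential equivalence: if $\rho,\rho'$ both extend $\rho_H$ with finite determinant, then $g\mapsto\rho'(g)\rho(g)^{-1}$ centralizes $\rho_H(H)$ by the same Clifford computation as in the proof of Proposition~\ref{prop:uniqextadmis}, hence is scalar by absolute simplicity, defining a continuous character $G\to E^\times$ which is trivial on $H$ and has finite image (both determinants being finite); passing to its kernel, an open subgroup containing $H$, we get $\rho=\rho'$. The only genuinely new ingredient beyond bookkeeping is the profinite vanishing statement for $H^2_{\mathrm{cont}}(\Gamma,\mu_N)$ under restriction to open subgroups, and I would either cite it from Serre's cohomology of profinite groups / the Anderson-type results already used, or deduce it from Lemma~\ref{lem:finitekill} together with the comparison between continuous cohomology of the profinite completion and group cohomology in low degrees.
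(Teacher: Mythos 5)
Your overall architecture is the same as the paper's: Clifford theory gives a continuous projective extension $\mathbb P\rho\colon G\to\mathrm{PGL}_r(E)$, the lifting obstruction is treated by continuous group cohomology and killed on an open subgroup, and uniqueness follows from the scalar-character argument. But two steps would fail as written, and both are repaired in the paper by a device you skip. First, you place the obstruction in $H^2_{\rm cont}(G,E^\times)$ and assert that it ``has values in the torsion part'' because of the finite-determinant hypothesis; this cannot simply be read off, and it matters, because continuous cohomology with coefficients in $E^\times$ is \emph{not} killed by restriction to open subgroups. Second, and for the same reason, your determinant adjustment is wrong: a continuous character of a profinite group with values in $E^\times$ can have infinite pro-$\ell$ image (e.g.\ $\mathrm{Hom}_{\rm cont}(\Z_\ell,1+\ell \mathcal O_E)\neq 0$), so ``its class in $\mathrm{Hom}_{\rm cont}(\Gamma,E^\times)$ can be killed on an open subgroup'' is false, and the $r$-th root of $\det\rho$ needed for the twist need not exist in $E^\times$. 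The paper handles both points at once: after passing to a finite extension $\tilde E/E$ in which every element of $E^\times$ is an $r$-th power, $\mathbb P\rho$ lands in $\mathrm{PSL}_r(\tilde E)$, one lifts through $1\to\mu_r\to\mathrm{SL}_r\to\mathrm{PSL}_r\to1$ so that the obstruction lives in $H^2_{\rm cont}(G,\mu_r)$ with \emph{finite} coefficients and the lift has trivial determinant by construction; the remaining discrepancy with $\rho_H$ on $H$ is a $\mu_N$-valued character corrected via inflation--restriction, and one descends back to $E$ by Galois descent using the uniqueness part. Some version of this reduction is indispensable.

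A smaller conceptual point: the vanishing of a class in $H^i_{\rm cont}(\Gamma,M)$, $i>0$, $M$ finite, on an open subgroup needs no hypothesis on $\Gamma$ whatsoever --- continuous cohomology of a profinite group with finite coefficients is the colimit of the cohomology of its finite quotients, so every class is inflated from some $\Gamma/U$ and dies on $U$. This is exactly why the statement is formulated for profinite groups; it is not the profinite analogue of Lemma~\ref{lem:finitekill}, and the injectivity of $\widehat H\to\widehat G$ plays no role in Proposition~\ref{prop:appext}, which is stated for an arbitrary closed normal subgroup of an arbitrary profinite group. (Relatedly, vanishing of the restriction to $H$ does not put the class in the image of inflation from $\Gamma$; it only places it in the part of the Hochschild--Serre filtration built from $H^1(\Gamma,H^1(H,-))$ and $H^2(\Gamma,H^0(H,-))$ --- harmless here since both die on open subgroups, but worth stating correctly.) Finally, your continuity argument for $g\mapsto P_g$ is vaguer than necessary: the paper checks it directly by writing any matrix as an $E$-linear combination of the $\rho_H(h)$, using absolute simplicity.
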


This result originates in the work of Simpson \cite[Proof of Thm.~4]{Sim92}. Variants
of it can be found in \cite[Proof of Prop.~3.1]{EG18} and
\cite[Prop.~3.1.1]{Lit21}. We are not aware of a reference for
Proposition~\ref{prop:appext} in the literature, so we sketch an argument the details of
which  can
be found in Hugo Zock's master thesis~\cite{Zoc24}.

\begin{proof}[Proof sketch]
For $M\in\mathfrak{gl}_n(E)$ and $P =[W] \in \mathrm{PGL}_r(E)$ we denote the conjugation
  action by $  ^PM = W M W^{-1}$.
For each $g\in G$ there exists a unique $P_g\in \mathrm{PGL}_r(E)$ with $^{P_g}\rho_H(h)=
\rho_H(g h g^{-1})$. We claim that the map $g\mapsto P_g$ is continuous. In order to check this we
have to show that for any $M\in\mathfrak{gl}_n(E)$,  the map $g\mapsto  {}^{P_g} M$ is
continuous. As $\rho_H$ is absolutely simple,  every such matrix $M$ is an $E$-linear
combination of matrices $\rho_H(h)$ for varying $h\in H$, so it is sufficient to observe that for any $h\in H$
the map
\[
g\mapsto  {}^{P_g} \rho_H(h) = \rho_H(g h g^{-1})
\]
is continuous.
So we see that we obtain a continuous  extension
\[
  \mathbb P\rho\colon G\to \mathrm{PGL}_r(E)\quad \text{of}\quad
  \mathbb P \rho_H \colon H \to \mathrm{PGL}_r(E).
\]
In order to prove essential uniqueness in Proposition~\ref{prop:appext} let $\rho $ and
$\tilde \rho$ be two lifts of $\mathbb P\rho$ to two
continuous representations of $G$  with
finite determinants and which agree on $H$. Then $\tau(g) = \rho(g) \tilde \rho(g)^{-1}\in K^\times$ defines a continuous
homomorphism $\tau\colon \Gamma\to K^\times$ with finite image. We deduce $\rho = \tilde \rho$
after restricting to the kernel of $\tau$.  This proves the uniqueness part.  See Proposition~\ref{prop:uniqextadmis} for a similar method.

For  the  existence  part  in Proposition~\ref{prop:appext}  we first check that it is sufficient to construct $\rho$ with values in
$\GL_r(\tilde E)$, where $\tilde E$ is a finite Galois extension of $E$. Indeed,
then for any $\eta\in \mathrm{Gal}(\tilde E/E)$ the extensions $\eta\cdot \rho$ and $\rho$ are
essentially equal by the uniqueness part.
So,  as $ \mathrm{Gal}(\tilde E/E)$ is finite, we can
 replace $G$ by an open subgroup containing
$H$ so they  are equal for all $\eta$, i.e.\ $\rho$ takes values in $\GL_r(E)$.

Consider a finite extension $\tilde E$ of $E$
 in which every element of $E^\times$ is an $r$-power of an element of $\tilde E^\times$.
Replace $E$ by $\tilde E$. Then $\mathbb P\rho$  has values  in the closed subgroup $\mathrm{PSL}_r(E)$
of $ \mathrm{PGL}_r(E)$.
Consider the strict exact sequence of topological groups
\[
1\to \mu_{r}(E) \to \mathrm{SL}_r(E)\to \mathrm{PSL}_r(E)\to 1
\]
in which the surjection on the right has a continuous splitting in topological spaces. So
the standard cocycle argument, see~\cite[App.~B]{Zoc24}, gives an exact sequence
\[
H^1_{\rm cont}(G,\mathrm{SL}_r(E) )\to H^1_{\rm cont}(G,\mathrm{PSL}_r(E) )
\xrightarrow{\rm Ob} H^2_{\rm cont}(G,\mu_r(E)).
\]
As the restriction of $\mathrm{Ob}(\mathbb P \rho)$ to $H$ vanishes, the Hochschild-Serre
spectral sequence  implies that ${\rm Ob} ( \P\rho)$ lies in a subgroup of
$H^2_{\rm cont}(G, \mu_r(E))$  involving
$H^a(\Gamma, H^{2-a}(H, \mu_r(E)))$ for $a=1,2$ only. Those two groups die on a finite index subgroup of $\Gamma$.
So we can assume  that $\mathrm{Ob}(\mathbb P \rho) =0$.

So we can lift $\mathbb P
\rho$ to a
continuous representation $\check \rho\colon G\to \mathrm{SL}_r(E)$.
Now \[
 \tau\colon H\to E^\times ,\quad  \tau(h) = \rho_H(h) \check\rho(h)^{-1}\in E^\times
\]
is a continuous homomorphism with values in $ \mu_N(E)$
for some $N>0$ by the condition on finite determinants. Note that $\tau$  is fixed by the $G$-action by conjugation. Using the exact
inflation-restriction sequence
\[
H^1_{\rm cont}(G,\mu_N(E)) \to H^1_{\rm cont}(H,\mu_N(E))^G \to H^2_{\rm cont}(\Gamma, \mu_N(E))
\]
we see that after replacing again $G$ by an open subgroup $G'$ with $H\subset G'$ we can lift $\tau$
to a continuous homomorphism $\check\tau\colon G\to \mu_N(E)$. Finally, we deduce that $\rho=
\check \tau \check \rho\colon G \to \GL_r(E)$ is the requested continuous extension of $\rho_H$.
\end{proof}

\end{document}